\newtheorem{theorem}{Theorem}[section]
\newtheorem{lemma}[theorem]{Lemma}
\newtheorem{corollary}[theorem]{Corollary}
\newtheorem{remark}[theorem]{Remark}
\newtheorem{proposition}[theorem]{Proposition}
\newtheorem{definition}[theorem]{Definition}
\newenvironment{proposition*}[1]{\smallskip\noindent{\bf #1.}\it}{\medskip}
\newenvironment{theorem*}[1]{\smallskip\noindent{\bf #1.}\it}{\medskip}
\newenvironment{proofof}[1]{\smallskip\noindent{\it #1}\rm}
                {\hspace*{\fill} $\Box$\medskip}
\numberwithin{equation}{section}
\newcommand{\slim}{\operatornamewithlimits{s-lim}}
\newcommand\opn{\operatorname}
\newcommand\diag{\operatorname{diag}}
\newcommand\dom{\operatorname{dom}}
\newcommand\supp{\operatorname{supp}}
\newcommand\myRe{\operatorname{Re}}
\newcommand\myIm{\operatorname{Im}}
\newcommand\wt{\widetilde}
\newcommand{\bla}{\bm{\lambda}}
\newcommand{\eps}{\varepsilon}
\newcommand{\bC}{\mathbb{C}}
\newcommand{\bN}{\mathbb{N}}
\newcommand{\bR}{\mathbb{R}}
\newcommand{\cM}{\mathcal{M}}
\newcommand{\cQ}{\mathcal{Q}}
\newcommand{\bkappa}{\bm{\varkappa}}
\begin{document}

\title[Integrable reflectionless potentials]%
{Inverse scattering for reflectionless Schr\"odinger operators with integrable potentials and generalized soliton solutions for the KdV equation}%
\author[R.~Hryniv  \and B.~Melnyk \and Ya.~Mykytyuk]%
{Rostyslav Hryniv, Bohdan Melnyk, \and Yaroslav Mykytyuk}%

\address[R.H.]{%Faculty of Applied Sciences, 
	Ukrainian Catholic University, 2a Kozelnytska str., 79026, Lviv, Ukraine \and
	%Department of Mathematics and Natural Sciences, the 
	University of Rzesz\'{o}w, 1 Pigonia str., 35-310 Rzesz\'{o}w, Poland
}%
\email{rhryniv@ucu.edu.ua, rhryniv@ur.edu.pl}

\address[B.M.]{Ivan Franko National University of Lviv,
	1 Universytetska st., 79602 Lviv, Ukraine}%
\email{bohdmelnyk@gmail.com}%

\address[Ya.M.]{Ivan Franko National University of Lviv,
	1 Universytetska st., 79602 Lviv, Ukraine}%
\email{yamykytyuk@yahoo.com}%

\thanks{}%
\subjclass[2010]{Primary: 47A40; Secondary: 34L25, 34L40, 35C08, 81U40}%
\keywords{Schr\"odinger operators, reflectionless potentials, inverse scattering, Korteweg--de Vries equation, generalized solitons}%

\date{\today}%
%\dedicatory{}%
%\commby{}%
%%%%%%%%%%%%%%%%%%%%%%%%%%%%%%%%%%%%%%%%%%%%%%%%%%%%%

\begin{abstract}
We give a complete characterisation of the reflectionless Schr\"odinger operators on the line with integrable potentials, solve the inverse scattering problem of reconstructing such potentials from the eigenvalues and norming constants, and derive the corresponding generalized soliton solutions of the Korteweg--de Vries equation.
\end{abstract}

\maketitle

%%%%%%%%%%%%%%%%%%%%%%%%%%%%%%%%%%%%%%%%%%%%%%%

\section{Introduction}

The main aim of the paper is to complete the theory of reflectionless Schr\"odinger operators
\[
	T_q := - \frac{d^2}{dx^2} + q(x)
\]
on the line with integrable potentials~$q$ that was developed in the work of Marchenko~\cite{Mar91} and Gesztesy, Karwowsky and Zhao~\cite{GKZ}. 

Reflectionless Schr\"odinger operators on the half-line were constructed for the first time by Bargmann~\cite{Bargmann} in~1949 as non-uniqueness examples in the inverse scattering problem of reconstructing the potential from the phase shift function. After the inverse scattering theory for Schr\"odinger operators on the whole line was developed by Marchenko, Gelfand and Levitan a.o.~\cite{Mar50,Mar52,GelLev} and the role of the bound states in the reconstruction was understood, Kay and Moses~\cite{KayMos} described explicitly all classical reflectionless potentials, sometimes called the Bargmann potentials. Namely, each such potential~$q$ is uniquely characterised by two sets of positive numbers, $\varkappa_1 > \varkappa_2 > \dots > \varkappa_n >0$ and $m_1, m_2, \dots, m_n$ via \begin{equation}\label{eq:int.q-refl}
	q(x) = -2 \frac{d^2}{dx^2} {\log { \det 
				{ \left( \delta_{kj} + \frac {m^2_k e^{-(\varkappa_k +\varkappa_j)x}}{\varkappa_k +\varkappa_j} \right)_{1\le k,j\le n}}}},
\qquad x\in \bR.
\end{equation}
In the above formula, $n \in \bN$ is arbitrary, $\delta_{kj}$ is the Kronecker delta, and the corresponding Schr\"odinger operator $T_q$ has spectrum consisting of the absolutely continuous part covering the positive half-line and of $n$ negative eigenvalues, $-\varkappa_1^2 < -\varkappa_2^2 < \dots < -\varkappa_n^2$ with $m_1, m_2, \dots, m_n$ being the corresponding norming constants.

The interest in such potentials was revived in 1967 after Green, Gardner, Kruskal and Miura~\cite{GGKM} suggested in 1967 a method of solving the nonlinear Korteweg--de Vries (KdV) equation
\begin{equation}\label{eq:KdV}
	u_t - 6uu_x + u_{xxx} = 0
\end{equation}
based on the inverse scattering transform for the related time-dependent family of Schr\"odinger operators
\[
	T_{u(\cdot, t)} = - \frac{d^2}{dx^2} + u (\,\cdot\,t).
\]
The main observation of~\cite{GGKM} was that the scattering data for $T_{u(\cdot, t)}$---the reflection coefficient, bound states, and norming constants---change in time $t$ in a very simple way, which allows their determination for any $t$ from their initial values and then solving the inverse scattering problem to find $u(\,\cdot\,,t)$. When the Cauchy initial value $u(\cdot,0)$ for the KdV equation~\eqref{eq:KdV} is reflectionless and is given by~\eqref{eq:int.q-refl}, one obtains the explicit $n$-\emph{soliton solution}
\begin{equation}\label{eq:int.soliton}
	u(x,t) :=  -2 \frac{d^2}{dx^2} {\log { \det 
			{ \biggl( \delta_{kj} + \frac {m^2_k e^{8\varkappa_k^2 t}e^{-(\varkappa_k +\varkappa_j)x}}{\varkappa_k +\varkappa_j} \biggr)_{1\le k,j\le n}}}}.
\end{equation}
This solution represents $n$ solitary waves first observed by Russell in 1834 that have many intriguing properties and have been the object of thoroughly study since then; see, e.g.~\cite{GH03, FadTak, Tao}. 

In early 1990-ies, Marchenko~\cite{Mar91} and Gesztesy a.o.~\cite{GKZ} suggested two different methods of constructing non-classical reflectionless potentials and corresponding generalized soliton solutions of the KdV equation. For each $\mu>0$, let $B(-\mu^2)$ be the set of all classical reflectionless potentials $q$ for which the corresponding Schr\"odinger operators~$T_q$ are bounded below by $-\mu^2$. Marchenko considered the closure $\overline{B(-\mu^2)}$ of $B(-\mu^2)$ in the topology of uniform convergence on compact subsets of $\bR$, studied properties of Schr\"odinger operators~$T_q$ with potentials~$q$ in~$\overline{B(-\mu^2)}$, and explained why such potentials can be considered reflectionless. Namely, for $T_q$ with~$q \in  \overline{B(-\mu^2)}$, the Weyl--Titchmarsh $m$-functions $m_\pm$ satisfy the relation
\begin{equation}\label{eq:int.m-refl}
	m_+(k) = - \overline{m_-(k)},  \qquad k\in\bR_+,
\end{equation}
which also holds for all classical reflectionless potentials. In addition, paper~\cite{Mar91} discusses the corresponding generalized soliton solutions of the KdV equation.

Later, Hur, McBride, and Remling~\cite{Remling} took that property of the related $m$-functions as defining the notion of reflectionless Schr\"odinger (or Jacobi) operators. In the Schr\"odinger operator context, they called a locally integrable potential~$q$ (or, more precisely, the Schr\"odinger operator~$T_q$) \emph{reflectionless} if the corresponding Weyl--Titchmarsh functions $m_\pm$ satisfy~\eqref{eq:int.m-refl} a.e.\ on $\bR_+$. 
The authors then characterised such potentials in terms of some representing measure $\sigma$ for the Herglotz functions~$m_\pm$ and derived some other their properties. 

A drawback of the approach of~\cite{Mar91,Remling} is that spectral properties of the Schr\"odinger operators with potentials in~$\overline{B(-\mu^2)}$ are not easy to get; they are only implicitly encoded in the measure~$\sigma$ related to the $m$-functions $m_\pm$. On the contrary, the approach of Gesztesy a.o.~\cite{GKZ} allowed to construct operators and soliton solutions to KdV with prescribed properties. Namely, the authors suggested to pass to the limit as $n\to\infty$ in formula~\eqref{eq:int.q-refl} for classical reflectionless potentials and in formula~\eqref{eq:int.soliton} for the $n$-soliton solutions of the KdV equation. They gave some conditions on sequences $\varkappa_j$ and $m_j$ under which such a passage to the limit is possible; the most crucial condition was that 
\[
	\sum_{j\ge 1}\frac{m_j^2}{\varkappa_j} < \infty,
\]
which guarantees that the determinant of the corresponding infinite matrix exists. It was proved that the limits~$q$ of~\eqref{eq:int.q-refl} produce reflectionless Schr\"odinger operators and the limits of~\eqref{eq:int.soliton} are classical solutions of the KdV equation. One of the most striking results of~\cite{GKZ} is that the sequence of $\varkappa_j$ can be fairly arbitrary, and thus the Schr\"odinger operators constructed that way may have an arbitrary countable (bounded below) set of negative bound states~$-\varkappa_j^2$ and, therefore, an arbitrary bounded below negative spectrum; the corresponding limit potentials are nevertheless bounded and smooth and generate classical solutions of the KdV equation.  In the case where the sequence $(\varkappa_j)_{j\in\bN}$ is in addition summable, the obtained generalized reflectionless potentials~$q$ and corresponding Schr\"odinger operators~$T_q$ allowed a more explicit description. For instance, the potential~$q$ is then integrable on the whole line, and the numbers~$m_j$ continue to be norming constants for the eigenvalues~$-\varkappa_j^2$.

The main aim of this paper is to specify the results of Marchenko~\cite{Mar91} and Gesztesy a.o.~\cite{GKZ} in the class of integrable reflectionless potentials. In that case, the negative spectrum of~$T_q$ is discrete and consists of at most countably many negative eigenvalues~$-\varkappa_1^2 < - \varkappa_2^2 < \dots <-\varkappa_N^2$, with $N\le \infty$. By the Lieb--Thirring inequality~\cite{LT,HLT,Weidl}, the sequence $(\varkappa_n)_{n\in\bN}$ is then summable. Moreover, for each eigenvalue~$-\varkappa_n^2$ the corresponding (right) Jost solution $e(\,\cdot\,, i\varkappa_n;q)$ is square integrable and thus one can introduce the norming constant~$m_n$. It turns out (and that is probably the most surprising fact, see Corollary~\ref{cor:one-to-one}) that there is absolutely no restrictions on the norming constants save that $m_n >0$. We prove that fact by exploiting an alternative formula for the Bargmann potentials, not using determinants. That alternative formula allowed us to pass to the limit as $n\to\infty$ in the classical formula and to derive the explicit formula~\eqref{eq:form.Q} and~\eqref{eq:form.q} for such generalised reflectionless potentials, thus giving their complete characterisation. 

We also prove that every such reflectionless potential is uniquely determined by the scattering data, the sequences of negative eigenvalues and the corresponding norming constants. To that end, we used the characterisation of generalized reflectionless potentials~$q$ due to Marchenko~\cite{Mar91} and Hur~a.o.~\cite{Remling} and showed that such a $q$ is uniquely determined by three negative spectra, that of $T_q$ and its half-line restrictions $T_q^+$ and $T_q^-$ by the Dirichlet condition~$y(0)= 0$. 
As a by-product, we also described all possible discrete spectra of $T_q$, $T_q^+$ and $T_q^-$ for generic reflectionless integrable $q$. We note that on that way we had to derive a generalisation a known formula relating the norming constant and the three discrete spectra and to prove some interpolation result for related Blaschke products, which can be of independent interest. On the other hand, we actually show that each integrable reflectionless potential is a limit in~$L_1(\bR)$ of a sequence of classical reflectionless potentials with special spectral properties, which opens the straightforward way to use the continuity results of our previous work~\cite{HM-traces} and generalise many classical relations (e.g.\ for Jost functions, $m$-functions etc) to this wider class.

Yet another advantage of describing reflectionless potentials by formulae~\eqref{eq:form.Q}--\eqref{eq:form.q} is that their straightforward modification produces solutions of the KdV equation, which can be called generalised soliton solutions. As we shall see, the proof is self-contained and is mainly based on the special algebraic structure of the solutions (combined with approximation arguments as necessary).

We conclude this introduction by describing how the paper is structured. In Section~\ref{sec:prelim}, we combine some preliminaries on the Jost solutions, scattering coefficients, their continuity on the potential, and properties of the classical reflectionless potentials to be used throughout the paper. Section~\ref{sec:existence} starts with providing insight for formulae~\eqref{eq:form.Q}--\eqref{eq:form.q}, which are proved in Theorem~\ref{thm:form.refl} to give integrable reflectionless potentials with prescribed eigenvalues and norming constants. To justify uniqueness theorem~\ref{thm:uniq-sp}, we first prove in Section~\ref{sec:uniqueness} that each integrable reflectionless potential~$q$ is uniquely determined by the discrete spectra of $T_q$ and its Dirichlet half-line restrictions $T_q^+$ and $T_q^-$, and then combine it with the three-spectra formula~\eqref{eq:3-sp} and an interpolation result for Blaschke products (Theorem~\ref{thm:interpolation}) proved in Section~\ref{sec:uniq-scattering}. Finally, in the last Section~\ref{sec:KdV} we prove that natural modification of~\eqref{eq:form.Q}--\eqref{eq:form.q} with time-evolving norming constants~$m_j$ produces a classical solution to the KdV equation. The appendix contains some auxiliary results on special class of Herglotz functions, Blaschke products, and relations between self-adjoint operators and their inverses.

\section{Preliminaries}\label{sec:prelim}

Throughout the paper, we denote by $\cQ_1$ the space of all real-valued functions in $L_1(\bR)$ with the inherited topology of the latter; $\bC_+$ and $\bC_-$ denote the open upper and lower half-planes, respectively. Every statement involving the $\pm$ signs should be regarded as two separate statements, with $\pm$ replaced with $+$ in the first statement and with $-$ in the second.

In this section, we collect some facts from~\cite{HM-traces, Mar} that will essentially be used in the proof of the main results of this paper. 

\subsection{Jost solutions and scattering coefficients}
For each $q \in \cQ_1$ and $\lambda \in \overline{\bC_+}\setminus\{0\}$, the equation 
\begin{equation}\label{eq:pre.equat}
	-y'' + qy = \lambda^2 y
\end{equation}
has a unique solution $e_+(\cdot,\lambda;q)$ that is asymptotic to $e^{i\lambda x}$ at $+\infty$, i.e., such that 
\[
	\lim_{x \to +\infty} e^{-i \lambda x} e_+(x,\lambda;q) = 1.
\]
Such a solution is called the \emph{(right) Jost solution}. Note that up to a constant factor, the Jost solution $e_+(\cdot,\lambda;q)$ with $\lambda \in \bC_+$ is the only solution of equation~\eqref{eq:pre.equat} that is square integrable at~$+\infty$. 

Analogously, for every $\lambda \in \overline{\bC_-}\setminus\{0\}$ one introduces the \emph{(left) Jost solution} $e_-(\cdot,\lambda;q)$ satisfying the relation 
\[
	\lim_{x \to -\infty} e^{-i \lambda x} e_-(x,\lambda;q) = 1;
\]
for $\lambda \in \bC_-$, it is the only solution of~\eqref{eq:pre.equat} (up to multiplication by a constant) that is square integrable at $-\infty$.

\begin{lemma}[\!\!\cite{HM-traces}]\label{lem:pre.Jost-bounds}
	Assume that $q \in \cQ_1$; then the right Jost solution satisfies the following inequality for all $x\in\bR$ and $\lambda\in \overline{\bC_+}\setminus\{0\}$:
\[
	|e_+(x,\lambda;q) - e^{i \lambda x}| 
		\le \frac{\|q\|_1}{|\lambda|} \exp\{ \|q\|_1/|\lambda| - |\opn{Im} \lambda|x\}.
\]	
	In addition, the Jost solution depends continuously on~$q$ in the following sense: if 	
	$\tilde{q}$ is another potential in $\cQ_1$ and $\alpha:= \max\{\|q\|_1,\|\tilde{q}\|_1\}$, then 
\[
	|e_+(x,\lambda;q) - e_+(x,\lambda;\tilde{q})| 
	\le \frac{\|q-\tilde{q}\|_1}{|\lambda|} \exp\{ 2\alpha/|\lambda| - |\opn{Im} \lambda|x\}
\]	
for all $x\in\bR$ and all $\lambda \in \overline{\bC_+}\setminus\{0\}$.	
\end{lemma}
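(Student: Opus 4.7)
The plan is to recast the Jost-solution problem as a Volterra integral equation and then derive both bounds by the standard Picard-iteration/Gr\"onwall technique, with the key trick being to normalize by $e^{i\lambda x}$ so as to produce a kernel that is uniformly bounded on the relevant region.

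First I would verify (by differentiating twice and checking boundary behaviour as $x \to +\infty$) that the Jost solution $e_+(\cdot,\lambda;q)$ is characterised, for $\lambda \in \overline{\bC_+}\setminus\{0\}$, as the unique bounded solution of
\[
e_+(x,\lambda;q) = e^{i\lambda x} + \int_x^\infty \frac{\sin(\lambda(t-x))}{\lambda}\, q(t)\, e_+(t,\lambda;q)\, dt.
\]
Setting $u(x) := e^{-i\lambda x} e_+(x,\lambda;q)$ converts this into
\[
u(x) = 1 + \int_x^\infty \frac{\sin(\lambda(t-x))\, e^{i\lambda(t-x)}}{\lambda}\, q(t)\, u(t)\, dt.
\]
For $t \ge x$ and $\opn{Im}\lambda \ge 0$, the identity $\sin(z)e^{iz} = (e^{2iz}-1)/(2i)$ applied at $z=\lambda(t-x)$ gives $|\sin(\lambda(t-x))e^{i\lambda(t-x)}| \le \tfrac12(e^{-2\opn{Im}\lambda(t-x)}+1) \le 1$, so the kernel is controlled by $|q(t)|/|\lambda|$.

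Next I would iterate: defining $u_0 \equiv 1$ and $u_{n+1}(x) := \int_x^\infty \frac{\sin(\lambda(t-x))e^{i\lambda(t-x)}}{\lambda} q(t) u_n(t)\, dt$, an induction on $n$ using $\int_x^\infty |q(t)|\,dt \le \|q\|_1$ yields $|u_n(x)| \le (\|q\|_1/|\lambda|)^n / n!$. Summing the resulting Neumann series proves convergence, uniqueness, and the pointwise estimates
\[
|u(x)| \le \exp(\|q\|_1/|\lambda|), \qquad |u(x) - 1| \le \frac{\|q\|_1}{|\lambda|}\exp(\|q\|_1/|\lambda|).
\]
Multiplying by $|e^{i\lambda x}| = e^{-\opn{Im}\lambda\, x} = e^{-|\opn{Im}\lambda|\, x}$ (since $\opn{Im}\lambda \ge 0$) gives the first inequality of the lemma.

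For the continuity statement, set $\tilde u(x) := e^{-i\lambda x} e_+(x,\lambda;\tilde q)$, subtract the two integral equations, and write $qu - \tilde q \tilde u = (q-\tilde q)u + \tilde q (u - \tilde u)$. Using the a priori bound $|u(t)| \le e^{\alpha/|\lambda|}$ already obtained, this gives
\[
|u(x) - \tilde u(x)| \le \frac{e^{\alpha/|\lambda|}\|q-\tilde q\|_1}{|\lambda|} + \int_x^\infty \frac{|\tilde q(t)|}{|\lambda|}\,|u(t) - \tilde u(t)|\,dt,
\]
and Gr\"onwall's inequality (applied on the half-line, integrating from $x$ to $\infty$) produces the factor $\exp(\|\tilde q\|_1/|\lambda|) \le \exp(\alpha/|\lambda|)$, yielding $|u - \tilde u|(x) \le (\|q-\tilde q\|_1/|\lambda|)\exp(2\alpha/|\lambda|)$. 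Another multiplication by $e^{-|\opn{Im}\lambda|\, x}$ finishes the proof.

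The argument is essentially routine; the only minor subtlety is the kernel bound $|\sin(\lambda(t-x))e^{i\lambda(t-x)}|\le 1$, which is what forces the factor $e^{i\lambda x}$ rather than $\cos$/$\sin$ into the correct normalization and makes the estimate uniform for $\lambda$ up to the real axis (where the naive bound $|\sin(\lambda(t-x))/\lambda| \le (t-x)$ would not yield an $L_1$ integrand). Apart from that, the constants $\|q\|_1/|\lambda|$ in the exponent and the $e^{-|\opn{Im}\lambda|x}$ factor fall out automatically.
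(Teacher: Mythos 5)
Your argument is correct. Note that the paper itself gives no proof of this lemma --- it is quoted verbatim from the reference \cite{HM-traces} --- so there is nothing in the text to compare against; your derivation via the Volterra equation for $u(x)=e^{-i\lambda x}e_+(x,\lambda;q)$, the kernel bound $|\sin(z)e^{iz}|=\tfrac12|e^{2iz}-1|\le 1$ for $\operatorname{Im}z\ge 0$, the factorial iteration estimate, and the backward Gr\"onwall step for the difference is exactly the standard route and yields both stated inequalities with the right constants. The only phrasing to tighten: for $\operatorname{Im}\lambda>0$ the Jost solution itself is unbounded as $x\to-\infty$, so the uniqueness claim should be stated for the normalized function $u$ (bounded on $\bR$), or for $e_+$ bounded on half-lines $[a,\infty)$, rather than for ``the unique bounded solution'' of the unnormalized equation.
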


\begin{corollary}\label{cor:pre.Jost-unif-int}
	For every $\delta>0$, we have 
	\begin{equation}\label{eq:pre.Jost-unif}
		\int_{\pm x>K} |e_\pm(x,\lambda;q)|^2\,dx =o(1), \qquad K\to +\infty,
	\end{equation}
	uniformly in $\lambda$ and $q$ satisfying $\pm\myIm\lambda >\delta$ and $\|q\|_1\le 1/\delta$, respectively.
\end{corollary}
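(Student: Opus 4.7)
The plan is to derive both tail bounds directly from the pointwise estimate in Lemma~\ref{lem:pre.Jost-bounds}. The two cases being entirely symmetric (the bound for $e_-$ follows from the analogue of Lemma~\ref{lem:pre.Jost-bounds} obtained by the reflection $x \mapsto -x$, $q(\cdot) \mapsto q(-\cdot)$, which swaps the two Jost solutions), I focus on $e_+$.

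First I would use the triangle inequality $|e_+(x,\lambda;q)| \le |e^{i\lambda x}| + |e_+(x,\lambda;q) - e^{i\lambda x}|$ together with Lemma~\ref{lem:pre.Jost-bounds}. Under the standing hypotheses $\opn{Im}\lambda > \delta$ and $\|q\|_1 \le 1/\delta$, one has $|\lambda| \ge \opn{Im}\lambda > \delta$, hence $\|q\|_1/|\lambda| \le 1/\delta^2$, so the lemma gives $|e_+(x,\lambda;q) - e^{i\lambda x}| \le \delta^{-2} e^{1/\delta^2}\, e^{-\opn{Im}\lambda \cdot x}$. Since $|e^{i\lambda x}| = e^{-\opn{Im}\lambda \cdot x}$, adding the two pieces yields a pointwise bound $|e_+(x,\lambda;q)| \le C_\delta\, e^{-\opn{Im}\lambda \cdot x}$ with $C_\delta := 1 + \delta^{-2} e^{1/\delta^2}$ independent of $\lambda$, $q$, and $x$.

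Next I would use $\opn{Im}\lambda > \delta$ to dominate $e^{-\opn{Im}\lambda \cdot x} \le e^{-\delta x}$ for $x \ge 0$. Squaring and integrating over $x > K \ge 0$ then produces a bound of the form $C_\delta^2 e^{-2\delta K}/(2\delta)$, which is uniform in the prescribed range of $\lambda$ and $q$ and tends to zero as $K \to +\infty$. This is exactly~\eqref{eq:pre.Jost-unif}. There is no genuine obstacle: the exponential decay in Lemma~\ref{lem:pre.Jost-bounds} already comes with the sharp rate $|\opn{Im}\lambda|$ and with constants controlled by $\|q\|_1/|\lambda|$, so the corollary is a direct integrated consequence.
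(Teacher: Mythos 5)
Your proof is correct and is exactly the argument the paper intends: the corollary is stated without proof precisely because it follows by combining the pointwise estimate of Lemma~\ref{lem:pre.Jost-bounds} with the triangle inequality, the uniform bounds $\|q\|_1/|\lambda|\le 1/\delta^2$ and $e^{-|\myIm\lambda|x}\le e^{-\delta x}$ for $x\ge 0$, and integration of the resulting exponential, with the $e_-$ case reduced to the $e_+$ case by the reflection $x\mapsto -x$, $q\mapsto q(-\cdot)$. No gaps.
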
	
	
For a potential $q\in \cQ_1$ and every real non-zero~$k$, the left Jost solutions $e_-(\,\cdot\,,k;q)$ and $e_-(\,\cdot\,,-k;q)$ form a fundamental system 
of solutions to the energy equation
\[
	-y'' + q(x) y = k^2 y;
\]
therefore, there are unique coefficients $a(k)$ and $b(k)$ such that 
\begin{equation}\label{eq:pre.ab}
	e_+(x,k;q) = a(k) e_-(x,k;q) + b(k) e_-(x,-k;q).
\end{equation}
Using the asymptotic properties of the Jost solutions, we conclude that the function~$a$ satisfies the relation
\[
	2ik a(k) = W\bigl(e_+(\cdot,k;q),e_-(\cdot, - k;q)\bigr),
\]
with $W(f,g) = f'g-fg'$ being the Wronskian of functions $f$ and $g$. In particular, the above formula can be used to extend $a$ to the open upper-half complex plane~$\bC_+$; the extended function (still denoted by $a$) is analytic in $\bC_+$ and continuous up to the punctured real line $\bR\setminus\{0\}$.

Applying the same arguments as in the classical case of Faddeev--Marchenko potentials, i.e., real-valued potentials $q$ from $L_1(\bR; (1+|x|)dx)$ (cf.~\cite{Mar}), we conclude that $|a(k)|^2 = 1 + |b(k)|^2$, so that $a(k)$ never vanishes on~$\bR\setminus\{0\}$; the functions 
\[
	t(k):= \frac1{a(k)},  \qquad r_-(k):= \frac{b(k)}{a(k)}
\]
are called the \emph{transmission} and (left) \emph{reflection} coefficients. The (right) reflection coefficient $r_+$ can be derived analogously; as in the calssical case of Faddeev--Marchenko potentials,
\[
	r_+(k) = - \frac{b(-k)}{a(k)}.
\]

\begin{definition}
A real-valued function~$q$ in $L_1(\bR)$ is called a (generalized) \emph{reflectionless} potential if the corresponding reflection coefficients vanish identically on $\bR\setminus\{0\}$. The set of all (generalized) reflectionless potentials in $L_1(\bR)$ will be denoted by $\mathcal{R}_1$. 
\end{definition}

Clearly, a potential $q \in \cQ_1$ is reflectionless if and only if the transmission coefficient satisfies the condition~$|t(k)|\equiv 1$, or if $|a(k)|\equiv 1$ for $k\in\bR\setminus\{0\}$. In Subsection~\ref{ssec:3sp-reflectionless}, we mention an alternative (and more general) definition of reflectionless potentials in terms of the associated Weyl--Titchmarsh $m$-functions; the corresponding relation was first derived by Marchenko~\cite{Mar} and then turned into definition by Hur~a.o.~\cite{Remling}; see also~\cite{Rem13, PolRem09} for similar treatments of reflectionless Jacobi matrices.

\subsection{Continuity of eigenvalues and norming constants}

For each $q\in\cQ_1$ with $n\le\infty$ negative eigenvalues $-\varkappa_1^2 < -\varkappa_2^2 < \dots$, we define a non-increasing sequence $\bm{\varkappa}(q):=(\varkappa_j)_{j=1}^\infty$ of non-negative numbers, with $\varkappa_{n+1} = \varkappa_{n+2} = \dots =0$ if $n$ is finite. By the Lieb--Thirring inequality~\cite{LT} (for the case under consideration proved by Weidl~\cite{Weidl} and with the exact constant established by Hundertmark a.o.~\cite{HLT}), we have 
\[
	\|\bm{\varkappa}(q)\|_1 =\sum_{j\ge1} \varkappa_j \le \tfrac12\|q\|_1.
\] 
In particular, the sequence~$\bm{\varkappa}(q)$ belongs to $\ell_1(\bN)$;  one of the main results of~\cite{HM-traces} states that, moreover, the mapping 
\[
	\cQ_1 \ni q \mapsto \bm{\varkappa}(q) \in \ell_1(\bN)
\]
is continuous. 

Assume that $-\varkappa^2$ is an eigenvalue of a Schr\"odinger operator $T_q$ with $q\in \cQ_1$. Then the corresponding eigenfunction $\psi$ must be collinear to the right Jost solution $e_+(\cdot, i\varkappa;q)$ at $+\infty$ and to the left Jost solution $e_-(\cdot, -i\varkappa;q)$ at $-\infty$; therefore, the two Jost solutions are collinear, i.e., 
\[
	e_+(\cdot,i\varkappa;q) = C_+ e_-(\cdot,-i\varkappa;q)
\]
with $C_+ = e_+(x,i\varkappa;q)/e_-(x,-i\varkappa;q)$. In particular, the right Jost solution is an eigenfunction, and the (right) \emph{norming constant} $m_+$ corresponding to the eigenvalue $-\varkappa^2$ is the inverse $L_2$-norm of this Jost solution, i.e.,
\[
	m_+^{-2} := \int_{\bR} |e_+(x,i\varkappa;q)|^2\,dx.
\]

One can similarly introduce the left norming constant, viz.
\[
	m_-^{-2} := \int_{\bR} |e_-(x,-i\varkappa;q)|^2\,dx.
\]
Clearly, both $m_+e_+(\cdot,i\varkappa;q)$ and $m_-e_-(\cdot,-i\varkappa;q)$ are eigenfunctions of the operator $T_q$ for the eigenvalue $-\varkappa^2$ of norm one; therefore, they differ by a unimodular factor (in fact, $\pm1$); in particular,
\begin{equation}\label{eq:pre.normed-ef}
	m_+|e_+(\cdot,i\varkappa;q)| \equiv m_-|e_-(\cdot,-i\varkappa;q)|. 
\end{equation}
The right and left norming constants are closely related; namely, arguments similar to those in~\cite[Ch.~3.5]{Mar} show that 
\begin{equation}\label{eq:pre.norming-relation}
	(m_+m_-)^{-2} = - \bigl(\dot{a}(i\varkappa)\bigr)^2.
\end{equation}

In what follows, we shall mostly consider the right norming constants and thus will be writing $m$ instead of $m_+$ whenever no confusion can arise.

Next we show certain continuity of the norming constants. Assume that $q_n$ is a sequence of integrable potentials with the property that $-\varkappa^2$ is an eigenvalue of every operator $T_{q_n}$ and let $m_n$ be the corresponding norming constant. Assume next that, as $n\to\infty$, the sequence $q_n$ converges in $L_1(\bR)$ to a function~$q_0$. Continuity of the eigenvalues~\cite{HM-traces} guarantees that $-\varkappa^2$ is an eigenvalue of the operator~$T_{q_0}$; let $m_0$ be the corresponding norming constant.

\begin{lemma}\label{lem:pre.cont-norm}
Under the above assumptions, the limit of $m_n$ as $n\to\infty$ exists and is equal to $m_0$.
\end{lemma}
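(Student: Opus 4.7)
\begin{proofof}{Plan for Lemma~\ref{lem:pre.cont-norm}.}
The strategy is to show that the Jost solutions converge in $L_2(\bR)$ and then read off convergence of the norming constants from the defining formula $m_n^{-2} = \int_{\bR} |e_+(x,i\varkappa;q_n)|^2\,dx$. Since $q_n \to q_0$ in $L_1(\bR)$, the sequence $(\|q_n\|_1)$ is bounded, so Lemma~\ref{lem:pre.Jost-bounds} yields
\[
	|e_+(x,i\varkappa;q_n) - e_+(x,i\varkappa;q_0)|
	\le \frac{\|q_n - q_0\|_1}{\varkappa}\exp\{2\alpha/\varkappa - \varkappa x\},
\]
where $\alpha:= \sup_n\|q_n\|_1$. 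In particular $e_+(\cdot,i\varkappa;q_n) \to e_+(\cdot,i\varkappa;q_0)$ uniformly on every half-line $[-K,\infty)$, and hence in $L_2([-K,K])$ for each fixed $K>0$.

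To control the tails I would split the integral at $\pm K$. The tail at $+\infty$ is immediate from Corollary~\ref{cor:pre.Jost-unif-int}: choosing $\delta\in (0,\min\{\varkappa,1/\alpha\})$, the estimate
\[
	\int_K^{+\infty} |e_+(x,i\varkappa;q_n)|^2\,dx = o(1), \qquad K\to +\infty,
\]
holds uniformly in $n$. The left tail is the delicate part, since $e_+(\cdot,i\varkappa;q_n)$ decays at $-\infty$ at a rate a priori depending on $n$. To handle it, I exploit the fact that $-\varkappa^2$ is an eigenvalue: the Jost solutions are collinear, so
\[
	e_+(x,i\varkappa;q_n) = C_n\, e_-(x,-i\varkappa;q_n),
\]
and the left-hand-side tail is governed by the left Jost solution, for which Corollary~\ref{cor:pre.Jost-unif-int} again gives
\[
	\int_{-\infty}^{-K} |e_-(x,-i\varkappa;q_n)|^2\,dx = o(1), \qquad K\to +\infty,
\]
uniformly in $n$. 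The main remaining issue is therefore to establish a uniform bound $\sup_n |C_n| <\infty$.

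This follows by continuity at a single point: pick any $x_0\in \bR$ with $e_-(x_0,-i\varkappa;q_0)\ne 0$ (such a point exists since $e_-(\cdot,-i\varkappa;q_0)$ is a nontrivial solution of~\eqref{eq:pre.equat}), and use Lemma~\ref{lem:pre.Jost-bounds} for $e_+$ and its symmetric counterpart for $e_-$ to conclude that, as $n\to\infty$,
\[
	e_+(x_0,i\varkappa;q_n)\to e_+(x_0,i\varkappa;q_0), \qquad e_-(x_0,-i\varkappa;q_n)\to e_-(x_0,-i\varkappa;q_0)\ne 0,
\]
so that $C_n\to C_0$ and in particular $\sup_n |C_n|<\infty$.

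With the uniform tail estimates in hand, the standard $\varepsilon/3$-splitting---large positive tail, large negative tail, and the compact middle interval on which uniform convergence holds---yields
\[
	\int_{\bR} |e_+(x,i\varkappa;q_n)|^2\,dx \to \int_{\bR} |e_+(x,i\varkappa;q_0)|^2\,dx,
\]
i.e.\ $m_n^{-2}\to m_0^{-2}$, and the conclusion $m_n\to m_0$ follows since both are positive. The only nontrivial step is the uniform boundedness of~$|C_n|$, and that is settled by the one-point continuity argument above.
\end{proofof}
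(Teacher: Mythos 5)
Your proof is correct. It rests on the same three pillars as the paper's: the Lipschitz bound of Lemma~\ref{lem:pre.Jost-bounds} for the right Jost solutions, the collinearity $e_+(\cdot,i\varkappa;q_n)=C_n e_-(\cdot,-i\varkappa;q_n)$ to control the behaviour at $-\infty$, and the one-point evaluation at a non-vanishing point of $e_-(\cdot,-i\varkappa;q_0)$ to get $C_n\to C_0$. The organisation differs, though: the paper proves the stronger statement that the eigenfunctions $e_+(\cdot,i\varkappa;q_n)$ converge to $e_+(\cdot,i\varkappa;q_0)$ in $L_2(\bR)$, handling the left half-line by writing $e_+(q_n)-e_+(q_0)=C_n\bigl(e_-(q_n)-e_-(q_0)\bigr)+(C_n-C_0)e_-(q_0)$ and using the $L_2\bigl((-\infty,a]\bigr)$ convergence of the left Jost solutions; you instead prove only convergence of the $L_2$ norms, controlling the two tails uniformly in $n$ via Corollary~\ref{cor:pre.Jost-unif-int} and using locally uniform convergence on the compact middle interval. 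Your route buys a slightly more economical argument (no difference estimate for $e_-$ on a half-line is needed, only uniform tail smallness), at the cost of not recording the $L_2(\bR)$ convergence of the eigenfunctions themselves, which is all the lemma asks for but is occasionally useful elsewhere. Both are complete; the only point worth stating explicitly in a write-up is that $\sup_n|C_n|<\infty$ is what converts the uniform tail bound for $e_-$ into one for $e_+$, and you do address that.
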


\begin{proof} We prove that the eigenfunctions $e_+(\cdot, i\varkappa;q_n)$ converge in the topology of $L_2(\bR)$ to the eigenfunction~$e_+(\cdot, i\varkappa;q_0)$.
	By Lemma~\ref{lem:pre.Jost-bounds}, there exists a $c>0$ independent of~$n$ such that  
	\[
		|e_+(x, i\varkappa;q_n) - e_+(x, i\varkappa;q_0)|^2 \le c \|q_n - q_0\|_1^2 e^{-2 \varkappa x}
	\]
	and thus, for every $a\in\bR$,
	\[
		\int_{a}^\infty |e_+(x, i\varkappa;q_n) - e_+(x, i\varkappa;q_0)|^2\,dx \to 0
	\]
	as $n\to\infty$. By similar arguments applied to the left Jost solutions, 
	\[
		\int_{-\infty}^a |e_-(x, -i\varkappa;q_n) - e_-(x, -i\varkappa;q_0)|^2\,dx \to 0
	\]
	as $n\to\infty$ for every $a\in\bR$. Choose now $a\in\bR$ such that $e_-(a,-i\varkappa;q_0)\ne0$; then pointwise convergence of the Jost solutions guarantees that also $e_-(a,-i\varkappa;q_n)\ne0$ for all $n$ large enough and hence that 
	\[
		C_n := \frac{e_+(a,i\varkappa;q_n)}{e_-(a,-i\varkappa;q_n)} \to \frac{e_+(a,i\varkappa;q_0)}{e_-(a,-i\varkappa;q_0)} =: C_0
	\]
	as $n\to\infty$. Now the estimate
	\begin{align*}
			\int_{-\infty}^a |e_+(x, i\varkappa;q_n) - e_+(x, i\varkappa;q_0)|^2\,dx  
				& \le 2C_n^2 \int_{-\infty}^a |e_-(x, -i\varkappa;q_n) - e_-(x, -i\varkappa;q_0)|^2\,dx\\
				& + 2(C_n-C_0)^2 \int_{-\infty}^a |e_-(x, -i\varkappa;q_0)|^2\,dx
	\end{align*}
	 along with the above convergence results for $C_n$ and $e_-(\cdot, -i\varkappa;q_n)$ completes the proof. 
\end{proof}

\subsection{Properties of the classical reflectionless potentials}\label{ssec:classical-properties}
We conclude this section by listing some properties of the classical and generalized reflection potentials; cf.~\cite{Mar,GKZ,HM-traces,Remling} and the references therein. 

Assume that $q$ is a reflectionless potential such that the corresponding Schr\" odinger operator possesses $n$ negative eigenvalues $-\varkappa_1^2 < \dots < -\varkappa_n^2$ and (right) norming constants $m_1, \dots, m_n$. Then 
\begin{itemize}
	\item[(a)] $q$ is negative and is equal to
	\[
	q(x) = -4 \sum_{j=1}^n \varkappa_j m_j^2|e_+(x,i\varkappa_n;q)|^2;
	\] 
	in particular, the following trace formula holds:
	\[
	\int_\bR |q(x)|\,dx = 4 \sum_{j=1}^n \varkappa_j;
	\]
	\item[(b)] $q$ admits an analytic continuation in the strip 
	\[
		 \Pi_{\varkappa_1} := \{ z = x + iy \in \mathbb{C} \mid |y| < \varkappa_1^{-1}\}
	\]
	and obeys therein the inequality 
	\begin{equation}\label{eq:pre.q-analytic}
		|q(x + iy)| \le 2\varkappa_1^2 (1- \varkappa_1|y|)^{-2};
	\end{equation}
	\item[(c)] the corresponding scattering coefficient $a$ is a rational function given by 
	\[
		a(z) = \prod_{j=1}^{n} \frac{z-i\varkappa_j}{z+i\varkappa_j}.
	\]	
\end{itemize}		
	
We observe that~\eqref{eq:pre.normed-ef} also implies an alternative representation of $q$ with the left Jost solutions and the left norming constants $m_{1,-}, \dots m_{n,-}$, viz.
\[
	q(x) = -4 \sum_{j=1}^n \varkappa_j m_{j,-}^2|e_-(x,-i\varkappa_j;q)|^2.
\]

If $q\in\cQ_1$ is reflectionless, then by the results of~\cite{HM-traces}, $a$ is given by a similar Blaschke product (finite or inifinite depending on the number of negative eigenvalues), i.e.,
\[
	a(z) = \prod_{j\ge1} \frac{z-i\varkappa_j}{z+i\varkappa_j}.
\]
Such a function~$a$ is unimodular on $\bR$, so that indeed $r\equiv 0$.
Also, the function $a(\cdot;q)$ was proved in~\cite{HM-traces} to depend continuously on~$q\in\cQ_1$ in the topology of uniform convergence on compact subsets of $\bR\setminus\{0\}$; in particular, this implies that the family of all reflectionless potentials that are integrable on the whole line form a closed subset of $L_1(\bR)$.

Denote by $B(-\varkappa^2)$ the set of all classical reflectionless potentials~$q$ for which the corresponding Schr\"odinger operators $S_q$ have spectra contained in $[-\varkappa^2,\infty)$. Marchenko~\cite{Mar91} studied the closure $\overline{B(-\varkappa^2)}$ of $B(-\varkappa^2)$ in the topology of uniform convergence on compact subsets of~$\bR$; potentials in~$\overline{B(-\varkappa^2)}$ admit analytic extension in the strip $\Pi_{\varkappa_1}$ and obey therein the bound~\eqref{eq:pre.q-analytic}.  One of the important results of~\cite{Mar91} (proved earlier in~\cite{Lun}) reads

\begin{proposition}\label{pro:pre.Marchenko}
The set $\overline{B(-\varkappa^2)}$ is compact with respect to the uniform convergence on compact subsets of~$\bR$. 
\end{proposition}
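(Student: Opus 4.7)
The plan is to combine the uniform analyticity bound in property~(b) with Montel's normal families theorem, and then pass from $B(-\varkappa^2)$ to its closure via a standard metrizability argument.

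First I would observe that every $q \in B(-\varkappa^2)$ has its largest eigenvalue magnitude $\varkappa_1 \le \varkappa$, so the strip $\Pi_{\varkappa_1}$ contains $\Pi_\varkappa := \{z = x+iy : |y| < 1/\varkappa\}$, and each such $q$ extends analytically to $\Pi_\varkappa$. Restricting~\eqref{eq:pre.q-analytic} to the narrower substrip $\Omega := \{z = x+iy : |y| < 1/(2\varkappa)\}$ and using $\varkappa_1 \le \varkappa$ together with $\varkappa_1|y| \le 1/2$ yields the uniform bound
\[
|q(x+iy)| \le 2\varkappa^2 \bigl(1 - \tfrac{1}{2}\bigr)^{-2} = 8\varkappa^2
\]
valid for all $q \in B(-\varkappa^2)$ and all $x+iy\in\Omega$.

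Next, Montel's theorem applies: the family $B(-\varkappa^2)$, consisting of holomorphic functions on $\Omega$ that are uniformly bounded by $8\varkappa^2$, is normal there. Thus any sequence $(q_n) \subset B(-\varkappa^2)$ has a subsequence $(q_{n_k})$ converging locally uniformly on $\Omega$ to some holomorphic $q_\infty$. Since every compact subset of $\bR$ is compact in $\Omega$, the convergence is also uniform on compact subsets of $\bR$; because each $q_n$ is real-valued on $\bR$, so is $q_\infty$. This establishes that $B(-\varkappa^2)$ is relatively compact in the topology of uniform convergence on compact subsets of $\bR$, with all cluster points lying in $\overline{B(-\varkappa^2)}$ by definition.

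Finally, since this topology is metrizable (for instance by the Fr\'echet metric induced by the seminorms $q \mapsto \sup_{|x|\le n}|q(x)|$), relative compactness of $B(-\varkappa^2)$ is equivalent to sequential compactness of its closure. Given any $(q_n) \subset \overline{B(-\varkappa^2)}$, I would choose $\tilde q_n \in B(-\varkappa^2)$ within Fr\'echet distance $1/n$ of $q_n$, extract a locally uniformly convergent subsequence of $(\tilde q_n)$ by the preceding paragraph, and note that the corresponding subsequence of $(q_n)$ has the same limit, which lies in $\overline{B(-\varkappa^2)}$ by closedness. No serious obstacle arises: the entire argument is a routine application of Montel's theorem enabled by the analyticity estimate~\eqref{eq:pre.q-analytic}.
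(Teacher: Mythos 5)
Your argument is correct. Note, however, that the paper itself does not prove Proposition~\ref{pro:pre.Marchenko}: it is quoted from Marchenko~\cite{Mar91} and Lundina~\cite{Lun}, so there is no in-text proof to compare against. Your proof is a clean, self-contained justification based on exactly the ingredients the paper makes available: from property~(b) and the inclusion of spectra in $[-\varkappa^2,\infty)$ you get a strip $\Omega=\{|\myIm z|<1/(2\varkappa)\}$ on which all of $B(-\varkappa^2)$ extends holomorphically with the uniform bound $8\varkappa^2$, Montel then gives normality, and the $1/n$-approximation argument transfers sequential compactness from $B(-\varkappa^2)$ to its closure in the (metrizable, complete) compact-open topology on $C(\bR)$. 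This is almost certainly the same mechanism as in the cited sources, where the estimate~\eqref{eq:pre.q-analytic} originates. The only points worth making explicit if you wrote this up are (i) that property~(b) is stated for classical reflectionless potentials, which is exactly what the elements of $B(-\varkappa^2)$ are, so it applies with $\varkappa_1\le\varkappa$; and (ii) that the limit function, being a locally uniform limit of real-valued functions, is real-valued on $\bR$, so it is a legitimate element of the ambient space in which the closure is taken. Neither is a gap, just bookkeeping.
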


We build upon this result and construct sequences of classical reflectionless potentials that, in addition, converge in the topology of~$L_1(\bR)$. 

\begin{definition}\label{def:pre.condA} Assume that $\bm{\varkappa} =(\varkappa_j)_{j\in\bN}$ is an arbitrary strictly decreasing sequence of positive numbers belonging to~$\ell_1(\bN)$. We say that a sequence~$(q_n)_{n\in\bN}$ of classical reflectionless potentials satisfies assumption $A(\bm{\varkappa})$ if the following two conditions are satisfied:
	\begin{enumerate}
		\item for every $n\in\bN$, the set $\{-\varkappa^2_j\}_{j=1}^n$ is the negative spectrum of the operator $T_{q_n}$;
		\item denote by $m_{j,n}$, $j\le n$, the right norming constants of the operator $T_{q_n}$ corresponding to the eigenvalues $-\varkappa^2_j$; then, for every $j\in\bN$,  the limit of $m_{j,n}$ as $n\to\infty$ exists and is positive.
	\end{enumerate}
\end{definition}

\begin{lemma}\label{lem:pre.condA}
	Assume that $(q_n)_{n\in\bN}$ is a sequence of classical reflectionless potentials satisfying assumption $A(\bm{\varkappa})$. Then there exists a subsequence of  $(q_n)_{n\in\bN}$ that converges uniformly on compact subsets of $\mathbb{R}$ as  well as in the norm of the space $L_1(\mathbb{R})$ to some reflectionless potential $q\in\cQ_1$; moreover, $\|q\|_1=4\sum_{j=1}^\infty \varkappa_j$.
\end{lemma}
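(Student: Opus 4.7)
The plan is to combine Marchenko's compactness with a uniform integrability argument based on the explicit representation in property~(a) of Subsection~\ref{ssec:classical-properties}, and then close the argument by invoking that reflectionless integrable potentials form a closed subset of $L_1(\bR)$.

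First, by property~(a), each $q_n$ is non-positive with $\|q_n\|_1 = 4\sum_{j=1}^n \varkappa_j \le 4\|\bm{\varkappa}\|_{\ell_1(\bN)} =: M < \infty$; in particular, $\|q_n\|_1 \to 4\sum_{j=1}^\infty \varkappa_j$. Since $T_{q_n}$ is bounded below by $-\varkappa_1^2$ for every $n$, we have $q_n \in B(-\varkappa_1^2)$, and Proposition~\ref{pro:pre.Marchenko} yields a subsequence $(q_{n_k})$ converging uniformly on compact subsets of $\bR$ to some $q \in \overline{B(-\varkappa_1^2)}$. Since $q_{n_k}(x)\to q(x)$ pointwise and each $q_{n_k}\le0$, Fatou's lemma gives $\|q\|_1 \le M < \infty$, so $q \in \cQ_1$.

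The key step is to upgrade locally uniform convergence to $L_1(\bR)$ convergence by establishing uniform integrability of $(q_n)$ at $\pm\infty$. Using property~(a),
\[
\int_R^\infty |q_n(x)|\,dx = 4\sum_{j=1}^n \varkappa_j\, m_{j,n}^2 \int_R^\infty |e_+(x,i\varkappa_j;q_n)|^2\,dx,
\]
and I split the sum at some index $J$. For $j > J$, the crude bound $m_{j,n}^2 \int_R^\infty |e_+|^2\,dx \le m_{j,n}^2 \int_\bR |e_+|^2\,dx = 1$ gives a contribution at most $4\sum_{j > J} \varkappa_j$, which is arbitrarily small for large $J$ since $\bm{\varkappa}\in\ell_1(\bN)$. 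For the finitely many terms with $j \le J$, Lemma~\ref{lem:pre.Jost-bounds} combined with $\|q_n\|_1 \le M$ yields a uniform-in-$n$ exponential estimate $|e_+(x,i\varkappa_j;q_n)|^2 \le C_j e^{-2\varkappa_j x}$ for $x > 0$, where $C_j$ depends only on $\varkappa_j$ and $M$; since each $m_{j,n}$ is bounded (it converges to $m_j > 0$), choosing $R$ large makes this head contribution arbitrarily small. The same reasoning at $-\infty$ uses the left Jost solutions and the left norming constants $m_{j,n,-}$; relation~\eqref{eq:pre.norming-relation} combined with the explicit Blaschke-product form of $a(\,\cdot\,;q_n)$ in property~(c) (which depends only on $\varkappa_1,\dots,\varkappa_n$) shows that $\dot{a}(i\varkappa_j;q_n)$ converges, and hence $m_{j,n,-}$ also converges to a positive limit as $n\to\infty$, so the same estimates apply.

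Uniform integrability together with the already established locally uniform convergence implies $q_{n_k} \to q$ in $L_1(\bR)$. Reflectionlessness of $q$ then follows from the fact, recalled at the end of Subsection~\ref{ssec:classical-properties}, that the set of reflectionless potentials in $L_1(\bR)$ is closed, and the norm equality $\|q\|_1 = \lim_k \|q_{n_k}\|_1 = 4\sum_{j=1}^\infty \varkappa_j$ is immediate. The main obstacle is the uniform integrability step: handling arbitrarily small $\varkappa_j$ requires the dichotomy between a uniformly controlled tail (thanks to $\ell_1$-summability of $\bm{\varkappa}$ and the eigenfunction normalization) and a finite head where uniform Jost bounds together with boundedness of the norming constants suffice.
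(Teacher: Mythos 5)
Your proposal is correct and follows essentially the same route as the paper: Marchenko's compactness (Proposition~\ref{pro:pre.Marchenko}) for locally uniform convergence, then uniform integrability at $\pm\infty$ obtained by splitting the eigenfunction representation of $q_n$ into an $\ell_1$-small tail (controlled by the norming-constant normalization) and a finite head (controlled by uniform Jost-solution decay), with the negative half-line handled via the left norming constants, relation~\eqref{eq:pre.norming-relation} and the Blaschke-product form of $a$. The only cosmetic differences are that you invoke Lemma~\ref{lem:pre.Jost-bounds} directly where the paper cites Corollary~\ref{cor:pre.Jost-unif-int}, and you do not formally introduce the reflected potentials $\widehat{q}_n$.
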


\begin{proof} By Proposition~\ref{pro:pre.Marchenko}, there is a subsequence~$(q_{n_k})_{k\in\bN}$ of $(q_n)_{n\in\bN}$ converging uniformly on compact subsets of~$\bR$. We prove next that the sequence $(q_n)_{n\in\bN}$ is uniformly integrable, i.e., that 
	\[
		\int_{|x|>K} |q_n(x)| \,dx =o(1), \qquad K\to +\infty,
	\]
	uniformly in $n\in\bN$. This implies that the subsequence $(q_{n_k})_{k\in\bN}$ converges also in the $L_1(\bR)$-topology; the limit $q\in L_1(\bR)$ is then reflectionless and
	\[
		\|q\|_1=\lim_{k\to\infty}\|q_{n_k}\|_1=4\sum_{j=1}^\infty \varkappa_j
	\]
	as claimed.

	Recall that $q_n$ is equal to
	\[
	q_n(x) = -4 \sum_{j=1}^n \varkappa_n m_{j,n}^2|e_+(x,i\varkappa_j;q_n)|^2
	\]
	and that its $L_1$-norm is equal to $4\sum_{j=1}^n \varkappa_j$; in particular, $\|q_n\|_1\le \alpha:= 4 \sum_{j\ge1} \varkappa_j$. Given $\eps>0$, we first find $M\in\bN$ such that
	\[
	\sum_{j>M} \varkappa_j < \eps/8
	\]
 	and then set
	\[
		\widetilde{q_n}(x) = -4 \sum_{1\le j\le M} \varkappa_j m_{j,n}^2|e_+(x,i\varkappa_j;q_n)|^2.
	\]  
	By the definition of the norming constants $m_{j,n}$ we see that, for every $n\in\bN$,
	\[
		\|q_n- \widetilde{q_n}\|_1\le 4\sum_{j>M} \varkappa_j < \eps/2;
	\]   
	also, the assumptions of the lemma imply that 
	\[     
		\sup_{n\ge M} \sum_{1\le j\le M} \varkappa_j m_{j,n}^2=:C<\infty.
	\]
	By Corollary~\ref{cor:pre.Jost-unif-int}, there exists $K>0$ such that
	\[
	\int_{x>K} |e_+(x,i\varkappa_j;q_n)|^2 \,dx < \eps/8C
	\]
	for all $j\le M$ and all $n\in\bN$. As a result, we conclude that
	\[ 
		\int^{+\infty}_K |q_n(x)| \,dx
			\le \eps/2+ \int^{+\infty}_K |\widetilde{q_n}(x)| \,dx 	
			\le \eps/2 +\eps/2=\eps,
	\]
	yielding the required result on the positive half-line, i.e., that 
	\begin{equation}\label{eq:pre.unif-int}
		\int_{x>K} |q_n(x)| \,dx =o(1), \qquad K\to +\infty
	\end{equation}
	uniformly in $n\in\bN$. 

	We next prove an analogous result on the negative half-line. Firstly, observe that the formula
	$$   
		\widehat{q}_n(x): = q_n(-x), \qquad x\in\mathbb{R}, \quad n\in\bN,
	$$
	defines a sequence $(\widehat{q}_n)_{n\in\bN}$ of reflectionless potentials such that the negative spectrum of the operators $T_{\hat{q}_n}$ coincides with the set $\{-\varkappa^2_j\}_{j=1}^n$. In addition, the right norming constant  $m_j(T_{\hat{q}_n})$ of $T_{\hat{q}_n}$ is equal to $m_{j,n,-}$, the left norming constant of the original operator $T_{q_n}$ corresponding to the eigenvalue $-\varkappa_j^2$.

	In view of~\eqref{eq:pre.norming-relation}, we find that
	\[
		m_{j,n,-}^{2} = - m_{j,n}^{-2} \bigl(\dot{a}_n(i\varkappa_j)\bigr)^{-2},
	\]
	with
	\[
		{a_n}(z):= \prod_{j=1}^n \frac{z-i\varkappa_j}{z+i\varkappa_j}.
	\]
	It follows from Lemma~\ref{lem:pre.Blaschke} that
	\[
		\lim\limits_{n\to \infty}\bigl(\dot{a}_n(i\varkappa_j)\bigr)^{-1} = \bigl(\dot{a}(i\varkappa_j)\bigr)^{-1},
	\]
	where
	\[
		{a}(z):= \prod_{j=1}^\infty \frac{z-i\varkappa_j}{z+i\varkappa_j}.
	\]
	Therefore, the sequence $(\widehat{q}_n)_{n\in\bN}$ of the classical reflectionless potentials satisfies the assumption $A(\bm{\varkappa})$. 
	By virtue of the estimate~\eqref{eq:pre.unif-int} established above, we conclude that 
	$$
		\int^{+\infty}_K |q_n(-x)|^2\,dx =o(1), \qquad K\to +\infty,
	$$
	uniformly in all $n\in\bN$. The proof is complete.
\end{proof}

%%%%%%%%%%%%%%%%%%%%%%%%%%%%%%%%%%%%%%%%%

\section{Reflectionless potentials with prescribed spectral data: existence}\label{sec:existence}

%%%%%%%%%%%%%%%%%%%%%%%%%%%%%%%%%%%%%%%%%%%%%

\subsection{Classical reflectionless potentials, revisited}

In this subsection, we rewrite formula~\eqref{eq:int.q-refl} for the classical reflectionless potentials in a way that will allow direct generalizations to the case~$q\in\cQ_1$.

Assume that $q$ is a real-valued potential of Faddeev--Marchenko class, i.e., that 
\[
	\int_{\bR} (1 + |x|)|q(x)|\,dx < \infty. 
\]
According to the classical inverse scattering theory for Schr\"odinger operators on the line~\cite{Mar}, the potential~$q$ satisfies the relation 
\[
	q(x) = - 2 \frac{d}{dx} k(x,x),
\]
where $k(x,t)$ is the kernel of the so-called transformation operator. 
This kernel $k$ can be obtained as a solution of the Marchenko equation
\begin{equation}\label{eq:form.M-equation}
	k(x,t) + f(x+t) + \int_x^\infty k(x,s) f(s+t)\,ds =0, \qquad x < t,
\end{equation}
in which $f$ encodes the scattering data for $T_q$, i.e., the reflection coefficient~$r_+$, the negative spectrum $-\varkappa_1^2 < -\varkappa_2^2 < \dots < -\varkappa_n^2<0$ and the corresponding norming constants $m_1, m_2, \dots, m_n$: 
\[
	f(s) := \sum_{j=1}^n m^2_j e^{-\varkappa_j s} + \frac1{2\pi} \int_{\bR} r_+(k) e^{iks}\,dk.
\]

In the reflectionless case ($r_+\equiv0$), equation~\eqref{eq:form.M-equation} is degenerate of rank~$n$, and thus can be solved explicitly. In that case the solution
$k$ must be of the form
\begin{equation}\label{eq:form.k-sum}
k(x,t) = \sum_{j=1}^n g_j(x) m_j e^{-\varkappa_j t};
\end{equation}
plugging that expression into the Marchenko equation, we arrive at the following linear system of equations for determining $g_j$:
\[
g_j(x) +  m_je^{-\varkappa_jx} +
\sum_{l=1}^n g_l(x) \int_x^\infty m_lm_j e^{-(\varkappa_l+\varkappa_j)s}\,ds = 0, \qquad j = 1,\dots, n,
\]
or 
\begin{equation}\label{eq:form.k-system}
g_j(x) +  m_je^{-\varkappa_jx} +
\sum_{l=1}^n g_l(x) \frac{m_lm_j}{\varkappa_l + \varkappa_j} e^{-(\varkappa_l+\varkappa_j)x} = 0, \qquad j = 1,\dots, n.
\end{equation}
Solving this via Cramer's rule and then plugging the result into~\eqref{eq:form.k-sum} gives the Kay--Moses formula~\eqref{eq:int.q-refl}.

Gesztesy a.o.~\cite{GKZ} proved that, under some conditions on $\varkappa_j$ and $m_j$, one can pass to the limit in the Kay--Moses formula to get a generalized reflectionless potential. One of the most essential conditions in~\cite{GKZ} was that 
\[
\sum_{j=1}^\infty \frac{m^2_j}{\varkappa_j} <\infty,
\]
imposing a strong restriction on norming constants and thus not allowing complete characterization of all integrable reflectionless potentials and their scattering data. 

This is the reason we decided to use a slightly different approach leading to a formula for all reflectionless potentials in~$\cQ_1$. 
Namely, in the Euclidean space~$\bC^n$ we introduce the column vector
\[
\Phi_n(x) := \bigl(m_1e^{-\varkappa_1x},\dots,m_ne^{-\varkappa_nx}\bigr)^\top
\]
and $G(x):= \bigl(g_1(x),\dots,g_n(x)\bigr)^\top$; then $k(x,t) = G^\top(x)\Phi(t)$, and the above system reduces to the vector-valued equation
\[
G(x) +  \int_x^\infty \Phi(s) \Phi^\top(s)\,ds \,G(x) = - \Phi(x).
\]
Since the matrix
\[
M(x):= \int_x^\infty \Phi(s) \Phi^\top(s)\,ds
\]
is nonnegative, $I+M(x)$ is nonsingular, and the above equation has a unique solution
\[
G(x) =  - (I+M(x))^{-1}\Phi(x);
\]
the kernel $k$ of the transformation operator then is
\[
k(x,t) = G^\top(x) \Phi(s) = - \Phi^\top(x) (I + M(x))^{-1} \Phi(t),
\]
and the potential $q$ is given by the formula
\[
q(x) = -2 \frac{d}{dx}k(x,x) = 2 \frac{d}{dx} \Phi^\top(x) (I + M(x))^{-1} \Phi(x).
\]

We next observe that the $(k,l)$-entry of the matrix $M(x)$ is equal to 
\[
\bigl(M(x)\bigr)_{k,l} = \frac{m_km_l}{\varkappa_k + \varkappa_l}e^{-(\varkappa_k+\varkappa_l)x}
\]
To make possible passage to the limit as $n\to\infty$, we introduce the vector $\bm{\varkappa}_n := (\varkappa_1, \dots, \varkappa_n)^\top$ in $\bC^n$, 
the $n\times n$ matrix $\Gamma_n$ with entries 
\[
(\Gamma_n)_{k,l}:= \frac{\varkappa_k\varkappa_l}{\varkappa_k + \varkappa_l},
\]
and two diagonal matrices $A_n$ and $K_n$,
\[
A_n = \diag\{\alpha_1, \dots,\alpha_n\}, \qquad K_n = \diag\{\varkappa_1,\dots, \varkappa_n\}
\]
with $\alpha_j:=\varkappa_j/m_j$. 
With these notations, we see that $\Phi_n(x) = A_n^{-1}e^{-K_nx} \bm{\varkappa}_n$, 
\[
M(x) = A_n^{-1}e^{-K_nx} \Gamma_n A_n^{-1}e^{-K_nx},
\]
and 
\begin{multline}\label{eq:form.Q-classical}
	Q_n(x) := \Phi^\top(x) (I + M(x))^{-1} \Phi(x) \\
	= \bm{\varkappa}_n^\top (A_n^2 e^{2K_nx} + \Gamma_n)^{-1} \bm{\varkappa}_n
= \langle (A_n^2 e^{2K_nx} + \Gamma_n)^{-1} \bm{\varkappa}_n, \bm{\varkappa}_n \rangle_{\bC^n},
\end{multline}
where $\langle \cdot\,,\,\cdot \rangle_{\bC^n}$ is the standard scalar product in~$\bC^n$. Combining the above relations (and writing $q_n$ instead of $q$ for consistency), we conclude that 
\begin{equation}\label{eq:form.q-classical}
	q_n(x) = 2 Q'_n(x)  = 2 \frac{d}{dx} \langle (A_n^2 e^{2K_nx} + \Gamma_n)^{-1} \bm{\varkappa}_n, \bm{\varkappa}_n \rangle_{\bC^n}.
\end{equation}

\subsection{Formula for integrable reflectionless potentials}\label{ssec:formula}

The above formula allows a direct infinite-dimensional generalization. Namely, take an arbitrary positive and strictly decreasing sequence $\bm{\varkappa}=(\varkappa_n)_{n\in\bN}$ in $\ell_1(\bN)$ and an arbitrary sequence $\mathbf{m}=(m_n)_{n\in\bN}$ of positive numbers and define an auxiliary sequence $\bm{\alpha} = (\alpha_1,\alpha_2,\dots)$ with $\alpha_j := \varkappa_j/m_j$. Next, in the Hilbert space~$H:=\ell_2(\bN)$ with the scalar product $\langle \cdot, \cdot \rangle$ and standard basis $\mathbf{e}_j$, we introduce positive diagonal operators 
\[
K = \diag\{\varkappa_1, \varkappa_2, \dots \}, \qquad A = \diag\{\alpha_1, \alpha_2, \dots\}
\]
and the operator $\Gamma$ defined via
\[
\langle \Gamma \mathbf{e}_k, \mathbf{e}_j\rangle= \frac{\varkappa_k\varkappa_l}{\varkappa_k + \varkappa_l} =: (\Gamma)_{k,l} .	
\]

\begin{theorem}\label{thm:form.refl} 
Under the above notations, the function
	\begin{equation}\label{eq:form.Q} 
	Q(x):= \|(A^2 e^{2xK} + \Gamma)^{-1/2} \bm{\varkappa}\|^2
	\end{equation}
	is well defined on the real line and the formula
	\begin{equation}\label{eq:form.q}
	q(x) := 2 Q'(x)
	\end{equation}
	defines a reflectionless potential in~$\cQ_1$ such that the negative spectrum of the Schr\"odinger operator $T_q$ coincides with the set $\{-\varkappa_n^2\}_{n\ge1}$ and the corresponding norming constant for $-\varkappa_n^2$ is $m_n$.
\end{theorem}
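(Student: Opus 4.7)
The plan is to approximate $q$ by classical reflectionless potentials and pass to the limit, then identify the limit with $2Q'$ via the operator-theoretic structure of~\eqref{eq:form.Q}. For each $n \in \bN$, I would take $q_n = 2 Q_n'$ to be the classical reflectionless potential of~\eqref{eq:form.q-classical} built from $\varkappa_1, \dots, \varkappa_n$ and $m_1, \dots, m_n$; then $T_{q_n}$ has eigenvalues $-\varkappa_1^2, \dots, -\varkappa_n^2$ with right norming constants $m_1, \dots, m_n$, so $(q_n)_{n\in\bN}$ satisfies assumption $A(\bm{\varkappa})$ of Definition~\ref{def:pre.condA}. Lemma~\ref{lem:pre.condA} then provides a subsequence $(q_{n_k})$ converging uniformly on compact subsets of $\bR$ and in $L_1(\bR)$ to some reflectionless $\tilde q \in \cQ_1$. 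Identifying $\tilde q$ with $q = 2Q'$ will immediately yield the claimed spectral data via the continuity results of Section~\ref{sec:prelim} and will force the full sequence to converge by uniqueness of subsequential limits.

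To make sense of $Q$, I would view $T(x) := A^2 e^{2xK} + \Gamma$ as a positive self-adjoint operator on $H = \ell_2(\bN)$. The bound $\Gamma_{k,l} \le \min(\varkappa_k,\varkappa_l)$ together with Schur's test gives $\|\Gamma\| \le \sum_j \varkappa_j$, so $\Gamma$ is bounded and the form domain of $T(x)$ coincides with that of the diagonal operator $A^2 e^{2xK}$; in particular, finitely supported vectors form a core for the quadratic form of $T(x)$. Letting $P_n$ denote the projection onto the span of the first $n$ basis vectors, one checks that $P_n T(x) P_n|_{\ran P_n} = A_n^2 e^{2xK_n} + \Gamma_n$, and the variational characterization of the quadratic form of $T(x)^{-1}$ yields
\[
  Q_n(x) = \sup_{u \in \ran P_n}\bigl\{2\Re\langle u,\bm{\varkappa}\rangle - \langle T(x) u, u\rangle\bigr\}, \qquad Q(x) = \sup_{u \in \dom T(x)^{1/2}}\bigl\{2\Re\langle u,\bm{\varkappa}\rangle - \langle T(x) u, u\rangle\bigr\},
\]
both values lying a priori in $[0,+\infty]$. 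Because $\bigcup_n \ran P_n$ is a form-core, the sequence $Q_n(x)$ is non-decreasing and converges to $Q(x)$ for every $x \in \bR$.

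To identify $\tilde q$ with $2Q'$, I would use that each $q_n$ is non-positive (by property~(a) of Subsection~\ref{ssec:classical-properties}) and that $Q_n(x) \to 0$ as $x \to +\infty$ (read off from~\eqref{eq:form.Q-classical}); integrating $q_n = 2Q_n'$ then gives $Q_n(x) = \tfrac{1}{2}\int_x^\infty(-q_n)(t)\,dt$. The $L_1$-convergence $q_{n_k} \to \tilde q$ now implies that $Q_{n_k}(x) \to \tilde Q(x) := \tfrac{1}{2}\int_x^\infty(-\tilde q)(t)\,dt$ uniformly in $x \in \bR$; combined with the monotone convergence $Q_n(x) \uparrow Q(x)$, this forces $Q(x) = \tilde Q(x) < \infty$ for every $x$. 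Hence $Q$ is well defined, $q := 2Q' = \tilde q$ lies in $\cQ_1$ and is reflectionless, and the full sequence $(q_n)$ converges to $q$ in $L_1(\bR)$. The $\ell_1(\bN)$-continuity of $\bm{\varkappa}(\,\cdot\,)$ recalled in Section~\ref{sec:prelim}, applied to $\bm{\varkappa}(q_n) = (\varkappa_1,\dots,\varkappa_n,0,0,\dots)$, yields $\bm{\varkappa}(q) = \bm{\varkappa}$, so the negative spectrum of $T_q$ is $\{-\varkappa_n^2\}_{n\in\bN}$; for each $j$, Lemma~\ref{lem:pre.cont-norm} applied to the fixed eigenvalue $-\varkappa_j^2$ (which is present in every $T_{q_n}$ with $n\ge j$ with right norming constant $m_j$) identifies $m_j$ as the norming constant of $T_q$ at $-\varkappa_j^2$.

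The main obstacle is the operator-theoretic step in the second paragraph: $T(x)$ is in general unbounded, so $T(x)^{-1}$ must be interpreted through its quadratic form, and one needs both the form-core property of the finitely supported vectors and the corresponding variational formula to obtain the monotone convergence $Q_n(x) \uparrow Q(x)$ in $[0,+\infty]$. Once these ingredients are in place, the $L_1$-level identification with $\tilde q$ and the verification of the spectral data via the continuity results of Section~\ref{sec:prelim} are comparatively routine.
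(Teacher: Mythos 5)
Your proposal is correct, and for the part of the argument that is genuinely delicate it takes a different route from the paper. The skeleton is the same: approximate by the classical reflectionless potentials $q_n=2Q_n'$ built from $\varkappa_1,\dots,\varkappa_n$ and $m_1,\dots,m_n$, invoke assumption $A(\bm{\varkappa})$ and Lemma~\ref{lem:pre.condA} to extract an $L_1$-convergent subsequence, and then identify the limit with $2Q'$ so that the continuity of eigenvalues and of norming constants (Lemma~\ref{lem:pre.cont-norm}) delivers the spectral data. The difference lies in how one proves $Q_n(x)\to Q(x)$. The paper splits into two cases: when $A$ is uniformly positive it has the representation~\eqref{eq:form.Q-positive-A} and uses norm-resolvent convergence of $B+P_n\Gamma P_n$ to $B+\Gamma$; when $A$ is not uniformly positive it must abandon the natural approximants, replace $A_n$ by $(\gamma_nI+A^2)^{1/2}|_{P_nH}$ with $\gamma_n=\|\Gamma-P_n\Gamma P_n\|^{1/2}$ (so the norming constants become $\varkappa_j/(\gamma_n+\alpha_j^2)^{1/2}\to m_j$), and then squeeze $Q_0$ between $Q$ via the two-sided estimates $Q_0(x;\eps)\le Q_0(x)$ and $Q_n(x)\le(1+\gamma_n/\delta)Q(x)$, relying on Propositions~\ref{pro:app.domain12} and~\ref{pro:app.inverse}. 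You instead keep the natural approximants throughout and observe that $Q_n(x)$ is the Legendre-type supremum of $u\mapsto 2\Re\langle u,\bm{\varkappa}\rangle-\langle T(x)u,u\rangle$ over $\ran P_n$, so that $Q_n(x)\uparrow Q(x)$ in $[0,+\infty]$ by the form-core property of the finitely supported vectors; finiteness of $Q$ (hence well-definedness, which the paper establishes separately via Lemmas~\ref{lem:form.gamma} and~\ref{lem:form.Gamma1/2}) then drops out of the identification $Q=\widetilde Q=\tfrac12\int_x^\infty(-\widetilde q)$. This buys a single unified argument with no case distinction and no regularization of the norming constants, at the cost of having to state and prove the variational characterization of $\|T^{-1/2}v\|^2$ (with value $+\infty$ off $\dom T^{-1/2}$) and the form-core claim, neither of which is in the paper's appendix; both are standard, and your sketch of why they hold ($\Gamma$ bounded by Schur's test, $e^{2xK}$ bounded with bounded inverse since $K$ is a bounded diagonal operator, so the form domain is $\dom A$) is sound.
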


This theorem gives a complete description of the scattering data for the Schr\"odinger operators with potentials in $\cQ_1$: as we already know, the bound states generate summable sequences of~$\varkappa_j$, and there is absolutely no restriction on the norming constants $m_j$ except that $m_j>0$. 

In the rest of this subsection, we derive some auxiliary results which, in particular, will show that formula~\eqref{eq:form.Q} is well defined. We start with establishing some properties of the operator~$\Gamma$. 

\begin{lemma}\label{lem:form.gamma}
	The operator~$\Gamma$ is positive and of trace class.
	% ; moreover, $\bm{\rho}$ belongs to the domain of $\Gamma^{-1/2}$
\end{lemma}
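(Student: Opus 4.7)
The plan is to exhibit $\Gamma$ as a Gram matrix of an $\ell_2$-summable family of vectors in $L_2(\bR_+)$. Both positivity and the trace-class property will then follow simultaneously. The key observation is the elementary identity
\[
	\frac{\varkappa_k\varkappa_j}{\varkappa_k+\varkappa_j}
	= \int_0^\infty \bigl(\varkappa_k e^{-\varkappa_k t}\bigr)\bigl(\varkappa_j e^{-\varkappa_j t}\bigr)\,dt,
\]
which realises $(\Gamma)_{k,j}$ as the $L_2(\bR_+)$-inner product $\langle g_k, g_j\rangle$ of the functions $g_k(t):= \varkappa_k e^{-\varkappa_k t}$.

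Concretely, I would define the linear operator $V:\ell_2(\bN)\to L_2(\bR_+)$ on the standard basis by $V\mathbf{e}_k := g_k$. Its Hilbert--Schmidt norm is
\[
	\|V\|_{\mathrm{HS}}^2
	= \sum_{k\ge 1}\|g_k\|_{L_2}^2
	= \sum_{k\ge 1}\int_0^\infty \varkappa_k^2 e^{-2\varkappa_k t}\,dt
	= \tfrac12\sum_{k\ge 1}\varkappa_k
	= \tfrac12\|\bm{\varkappa}\|_1,
\]
which is finite by the assumption $\bm{\varkappa}\in\ell_1(\bN)$. Hence $V$ is a well-defined Hilbert--Schmidt operator, and its adjoint $V^*$ is also Hilbert--Schmidt.

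Since $\langle V^*V\mathbf{e}_k,\mathbf{e}_j\rangle = \langle V\mathbf{e}_k, V\mathbf{e}_j\rangle = \langle g_k,g_j\rangle = (\Gamma)_{k,j}$ for every $k,j$, we identify $\Gamma = V^*V$. This immediately gives positivity; moreover, because the exponentials $g_k$ with distinct $\varkappa_k$ are linearly independent in $L_2(\bR_+)$, the operator $V$ has trivial kernel and $\Gamma$ is in fact strictly positive. The trace-class property follows from the general fact that the product of two Hilbert--Schmidt operators is trace class; alternatively, one may simply compute
\[
	\operatorname{tr}\Gamma = \sum_{k\ge 1}(\Gamma)_{k,k} = \sum_{k\ge 1}\frac{\varkappa_k}{2} = \tfrac12\|\bm{\varkappa}\|_1 < \infty,
\]
and invoke the fact that a positive operator with finite trace is trace class.

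There is no real obstacle here: the only point that requires care is verifying $\ell_1$-summability of $\bm{\varkappa}$ translates exactly into Hilbert--Schmidt summability of the Gram family, which the computation above makes transparent. The representation $\Gamma = V^*V$ will also be convenient later when estimating $(A^2 e^{2xK}+\Gamma)^{-1}$ in the proof of Theorem~\ref{thm:form.refl}.
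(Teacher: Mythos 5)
Your construction is the same as the paper's: your $V$ is exactly the operator $F$ there (with $F\mathbf{e}_k = \varkappa_k e^{-\varkappa_k\,\cdot}$), the Hilbert--Schmidt computation $\|V\|_{\mathrm{HS}}^2=\tfrac12\sum_k\varkappa_k$ is identical, and the identification $\Gamma=V^*V$ gives non-negativity and the trace-class property just as in the paper; for that part there is nothing to add.

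The one step that does not hold up is the claim that, ``because the exponentials $g_k$ with distinct $\varkappa_k$ are linearly independent in $L_2(\bR_+)$, the operator $V$ has trivial kernel.'' Finite linear independence of an infinite family does not prevent a nonzero $\mathbf{c}\in\ell_2(\bN)$ from satisfying $\sum_k c_k g_k=0$ (the series converges in $L_2(\bR_+)$ because $V$ is bounded); in general an infinite system can be linearly independent without being $\omega$-independent. This is not a cosmetic point: the paper needs $\Gamma$ to be injective, since $\Gamma^{-1/2}$ enters~\eqref{eq:form.Q} and Lemma~\ref{lem:form.Gamma1/2}, and that is exactly why its proof devotes a paragraph to showing $\ker F=\{0\}$. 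The argument there runs as follows: if $V\mathbf{c}=0$ with $\mathbf{c}\ne0$, let $n$ be the least index with $c_n\ne0$ and write $\phi_n=\sum_{j>n}c_j'\varkappa_j\phi_j$ with $\phi_j(t)=e^{-\varkappa_j t}$; differentiating $s$ times and dividing by $(-\varkappa_n)^s$ yields $\phi_n=\sum_{j>n}(\varkappa_j/\varkappa_n)^s c_j'\varkappa_j\phi_j$, and since $\varkappa_j/\varkappa_n<1$ for $j>n$, letting $s\to\infty$ and using dominated convergence forces $\phi_n\equiv0$, a contradiction. You should either reproduce an argument of this kind or invoke a M\"untz-type minimality result for the system $\{e^{-\varkappa_k t}\}_{k\in\bN}$; as written, the strict positivity is asserted but not proved.
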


\begin{proof}
	In the Hilbert space~$L_2(\bR_+)$, we introduce functions
	\[
	\phi_j(x):=e^{-\varkappa_j x},  \qquad j\in\bN,
	\]	
	and an operator $F: \ell_2(\bN) \to L_2(\bR_+)$ acting via 
	\[
	F \mathbf{c}:= \sum_{j\in\bN} \varkappa_j c_j \phi_j, \qquad \mathbf{c}=(c_j)_{j\in\bN}\in\ell_2(\bN).
	\]
	With $(\mathbf{e}_j)_{j\in\bN}$ being the standard orthonormal basis of $\ell_2(\bN)$, we have
	\[
	\sum_{j\in\bN} \|F \mathbf{e}_j\|^2 =  \sum\limits_{j=1}^\infty \frac{\varkappa_j^2}{2\varkappa_j} = \sum\limits_{j=1}^\infty \frac{\varkappa_j}{2} <\infty,
	\] 
	so that $F$ is a Hilbert--Schmidt operator.
	
	Assume that the kernel of $F$ is non-trivial. Then for some $n\in\bN$ we have
	\begin{equation}\label{eq.2B}
	\phi_n=\sum\limits_{j=n+1}^\infty  c_j \varkappa_{j} \phi_{j},
	\end{equation}
	with  $(c_j)_{j\in\bN}\in\ell_2$. As $\sum\limits_{j=n+1}^\infty  |c_j|\varkappa_{j}<\infty$, repeated differentiation of equality~\eqref{eq.2B} results in the relations  \begin{equation*}
	\phi_n=\sum\limits_{j=n+1}^\infty  \left(\frac{\varkappa_{j}}{\varkappa_{n}}\right)^s c_j \varkappa_j\phi_{j}
	\end{equation*}
	for all $s \in \bN$. Using the dominated convergence theorem and passing to the limit as $s\to\infty$, we conclude that~$\phi_n\equiv0$, which is a contradiction.
	
	By direct verification,	$\Gamma=F^*F$; as a result, the operator~$\Gamma$ is positive and of trace class.
\end{proof}

\begin{lemma}\label{lem:form.Gamma1/2}
	The vector $\bm{\varkappa}$ belongs to the domain of $\Gamma^{-1/2}$; moreover,
	\begin{equation}\label{eq:form.Gamma1/2}
	\lim_{t\to0+} \langle(tI+\Gamma)^{-1} \bm{\varkappa}, \bm{\varkappa}\rangle = \|\Gamma^{-1/2}\bm{\varkappa}\|^2 \le  2\sum_{j>1}\varkappa_j.
	\end{equation}
\end{lemma}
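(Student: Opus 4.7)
The plan is to exploit the factorization $\Gamma=F^{*}F$ from Lemma~\ref{lem:form.gamma}: I will exhibit an $L_{2}(\bR_{+})$-preimage of $\bm{\varkappa}$ under $F^{*}$ as a weak limit of finite-dimensional approximants, and then conclude via a Cauchy--Schwarz / Rayleigh-type estimate. First, the spectral theorem applied to the positive compact operator $\Gamma$ makes $t\mapsto\langle(tI+\Gamma)^{-1}\bm{\varkappa},\bm{\varkappa}\rangle$ monotone decreasing on $(0,\infty)$, and monotone convergence on the associated scalar spectral measure $\mu_{\bm{\varkappa}}$ gives
\[
\lim_{t\to 0^{+}}\langle(tI+\Gamma)^{-1}\bm{\varkappa},\bm{\varkappa}\rangle=\int\lambda^{-1}\,d\mu_{\bm{\varkappa}}(\lambda)=\|\Gamma^{-1/2}\bm{\varkappa}\|^{2},
\]
understood as $+\infty$ when $\bm{\varkappa}\notin\dom(\Gamma^{-1/2})$. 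The whole lemma thus reduces to the uniform estimate $\langle(tI+\Gamma)^{-1}\bm{\varkappa},\bm{\varkappa}\rangle\le 2\sum_{j\ge 1}\varkappa_{j}$ for every $t>0$.

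The second step is to build the preimage from finite truncations. Let $P_{N}$ project onto $\operatorname{span}\{\mathbf{e}_{1},\dots,\mathbf{e}_{N}\}$, put $\bm{\varkappa}_{N}:=P_{N}\bm{\varkappa}$, $\Gamma_{N}:=P_{N}\Gamma P_{N}$ (positive definite on its $N$-dimensional range), and $\mathbf{g}_{N}:=\Gamma_{N}^{-1}\bm{\varkappa}_{N}$. The crucial finite-dimensional identity is
\[
\langle\mathbf{g}_{N},\bm{\varkappa}_{N}\rangle=2\sum_{j=1}^{N}\varkappa_{j}.
\]
I would prove it either from the material of Subsection~\ref{ssec:classical-properties}: formula~\eqref{eq:form.Q-classical} gives $Q_{N}(-\infty)=\langle\Gamma_{N}^{-1}\bm{\varkappa}_{N},\bm{\varkappa}_{N}\rangle$, and combining $q_{N}=2Q_{N}'$, $Q_{N}(+\infty)=0$ with the trace formula from~\ref{ssec:classical-properties}(a) yields the claim; or, self-contained, by noting that $R(z):=\sum_{l=1}^{N}\varkappa_{l}(g_{N})_{l}/(\varkappa_{l}+z)$ is a proper rational function satisfying $R(\varkappa_{j})=1$ for $j=1,\dots,N$, hence equals $1-\prod_{j}(z-\varkappa_{j})/\prod_{l}(z+\varkappa_{l})$, and reading off the coefficient at $1/z$ at infinity gives $\langle\mathbf{g}_{N},\bm{\varkappa}_{N}\rangle=\lim_{z\to\infty}zR(z)=2\sum_{j\le N}\varkappa_{j}$.

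Extending $\mathbf{g}_{N}$ by zero to $\widetilde{\mathbf{g}}_{N}\in\ell_{2}(\bN)$ and setting $h_{N}:=F\widetilde{\mathbf{g}}_{N}\in L_{2}(\bR_{+})$, one verifies
\[
\|h_{N}\|^{2}=\langle F^{*}F\widetilde{\mathbf{g}}_{N},\widetilde{\mathbf{g}}_{N}\rangle=\langle\Gamma_{N}\mathbf{g}_{N},\mathbf{g}_{N}\rangle=\langle\bm{\varkappa}_{N},\mathbf{g}_{N}\rangle=2\sum_{j\le N}\varkappa_{j}
\]
and $(F^{*}h_{N})_{j}=\varkappa_{j}$ for every $j\le N$. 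Since $(h_{N})$ is uniformly bounded in $L_{2}(\bR_{+})$ by $(2\sum_{j\ge1}\varkappa_{j})^{1/2}$, Banach--Alaoglu provides a subsequence $h_{N_{k}}\rightharpoonup h^{*}$; testing the identity $(F^{*}h_{N_{k}})_{j}=\varkappa_{j}$ against the fixed $L_{2}$-function $\phi_{j}$ (valid once $N_{k}\ge j$) passes to the weak limit and yields $F^{*}h^{*}=\bm{\varkappa}$, while weak lower semicontinuity of the norm gives $\|h^{*}\|^{2}\le 2\sum_{j\ge1}\varkappa_{j}$.

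Finally, $\bm{\varkappa}=F^{*}h^{*}$ and Cauchy--Schwarz produce, for every $u\in\ell_{2}(\bN)$ and $t>0$,
\[
|\langle u,\bm{\varkappa}\rangle|^{2}=|\langle Fu,h^{*}\rangle_{L_{2}}|^{2}\le\|h^{*}\|^{2}\,\langle\Gamma u,u\rangle\le\|h^{*}\|^{2}\,\langle(tI+\Gamma)u,u\rangle,
\]
so the Rayleigh characterization $\langle(tI+\Gamma)^{-1}\bm{\varkappa},\bm{\varkappa}\rangle=\sup_{u\ne0}|\langle u,\bm{\varkappa}\rangle|^{2}/\langle(tI+\Gamma)u,u\rangle$ delivers exactly the required uniform bound; combined with the first step this gives $\bm{\varkappa}\in\dom(\Gamma^{-1/2})$ together with the claimed estimate. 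The main obstacle I anticipate is presenting the finite-dimensional identity $\langle\mathbf{g}_{N},\bm{\varkappa}_{N}\rangle=2\sum_{j\le N}\varkappa_{j}$ without a circular appeal to the constructions of Section~\ref{sec:existence} in which this very lemma is used to justify taking $t\to 0$; the elementary rational-function derivation sketched above is the safest route.
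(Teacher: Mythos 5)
Your proof is correct, and after the shared finite-dimensional identity $\langle\Gamma_N^{-1}\bm{\varkappa}_N,\bm{\varkappa}_N\rangle=2\sum_{j\le N}\varkappa_j$ it diverges from the paper's argument in a genuine way. The paper gets the uniform bound $\langle(tI+\Gamma)^{-1}\bm{\varkappa},\bm{\varkappa}\rangle\le 2\sum_j\varkappa_j$ by writing $\langle(tI+P_N\Gamma P_N)^{-1}\bm{\varkappa},\bm{\varkappa}\rangle=\langle(tI_N+\Gamma_N)^{-1}\bm{\varkappa}_N,\bm{\varkappa}_N\rangle+t^{-1}\|P_N'\bm{\varkappa}\|^2$, bounding the first term by $\langle\Gamma_N^{-1}\bm{\varkappa}_N,\bm{\varkappa}_N\rangle$, and then using that $P_N\Gamma P_N\to\Gamma$ in operator norm (as $\Gamma$ is trace class) to pass to the limit in the resolvent; Proposition~\ref{pro:app.domain12} finishes. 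You instead exploit the factorization $\Gamma=F^*F$ to manufacture, via Banach--Alaoglu and weak lower semicontinuity, an explicit $h^*\in L_2(\bR_+)$ with $F^*h^*=\bm{\varkappa}$ and $\|h^*\|^2\le 2\sum_j\varkappa_j$, and then obtain the same uniform bound from Cauchy--Schwarz and the Rayleigh-quotient characterization of $\langle(tI+\Gamma)^{-1}\bm{\varkappa},\bm{\varkappa}\rangle$. What your route buys: it never needs norm convergence of the compressions (so the trace-class property of $\Gamma$ is not used at this point), and it exhibits $\bm{\varkappa}\in\ran(F^*)=\ran(\Gamma^{1/2})$ directly in Douglas-factorization style; the cost is the extra weak-compactness machinery. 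Your self-contained rational-function derivation of $\langle\mathbf{g}_N,\bm{\varkappa}_N\rangle=2\sum_{j\le N}\varkappa_j$ is also valid and is a nice alternative to the paper's appeal to the trace formula $\|q_N\|_1=4\sum_{j\le N}\varkappa_j$ for classical reflectionless potentials; note, though, that the paper's appeal is not circular, since that trace formula is established in Section~\ref{ssec:classical-properties} for classical (finite-rank) potentials independently of the present lemma.
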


\begin{proof}
	Recall the notations $q_n$,$A_n$, $K_n$, $\Gamma_n$ and $\bm{\varkappa}_n$ introduced in the previous subsection. According to~\eqref{eq:form.q-classical}, we get 
	\[
		\frac12\int_{\bR} |q_n(t)|\,dt 
			= \lim_{x\to-\infty} \langle(A_n^2 e^{2K_nx}+\Gamma_n)^{-1} \bm{\varkappa}_n, \bm{\varkappa}_n\rangle_{\bC^n}
			= \langle\Gamma_n^{-1} \bm{\varkappa}_n, \bm{\varkappa}_n\rangle_{\bC^n} = 2 \sum_{j=1}^n \varkappa_j. 
	\]
	Denote now by $P_n$ the orthogonal projector in $H = \ell_2(\bN)$ onto the first $n$ coordinates and by $P_n':=I-P_n$ the complementing orthoprojector. Observe that the restriction of $P_n\Gamma P_n$ onto $P_n H$ is just $\Gamma_n$ and that $P_n\bm{\varkappa}$ can be identified with~$\bm{\varkappa}_n$. Take any $t>0$; then we get 
	\begin{align*}
		\langle (tI+P_n\Gamma P_n)^{-1} \bm{\varkappa}, \bm{\varkappa}\rangle 
			&= \langle (tP_n+P_n\Gamma P_n)^{-1} P_n\bm{\varkappa}, P_n\bm{\varkappa} \rangle + t^{-1}\|P_n'\bm{\varkappa}\|^2 \\
			&= \langle(tI_n+\Gamma_n)^{-1} \bm{\varkappa}_n, \bm{\varkappa}_n\rangle_{\bC^n} + t^{-1}\|P_n'\bm{\varkappa}\|^2,
	\end{align*}
	so that 
	\begin{align*}	
		\limsup_{n\to \infty} \langle (tI+P_n\Gamma P_n)^{-1} \bm{\varkappa}, \bm{\varkappa}\rangle 
	& = \limsup_{n\to \infty} \langle (tI_n+\Gamma_n)^{-1} \bm{\varkappa}_n, \bm{\varkappa}_n\rangle_{\bC^n} \\
	& \le \lim_{n\to \infty}\langle\Gamma_n^{-1} \bm{\varkappa}_n, \bm{\varkappa}_n\rangle_{\bC^n} = 2 \sum_{j=1}^\infty \varkappa_j. 
	\end{align*}
	On the other hand, since the operator $\Gamma$ is of trace class, its compressions $P_n\Gamma P_n$ converge in norm to~$\Gamma$. Therefore, the above limit superior exists in fact as the limit and 
	\[
		\lim_{n\to \infty} \langle (tI+P_n\Gamma P_n)^{-1} \bm{\varkappa}, \bm{\varkappa}\rangle 
			= \langle (tI+ \Gamma)^{-1} \bm{\varkappa}, \bm{\varkappa}\rangle 
			\le 2 \sum_{j=1}^\infty \varkappa_j.
	\]
	It follows from the latter inequality and Proposition~\ref{pro:app.domain12} that $\bm{\varkappa}$ belongs to the domain of $\Gamma^{-1/2}$ and that 
	\[
	\|\Gamma^{-1/2}\bm{\varkappa}\|^2 = \lim_{t\to0+}  \langle(tI+\Gamma)^{-1} \bm{\varkappa}, \bm{\varkappa}\rangle \le 2 \sum_{j=1}^\infty\varkappa_j.
	\]
	The proof is complete. 
\end{proof}

\begin{corollary}
	The function $Q$ of~\eqref{eq:form.Q} is well defined on the whole real line and satisfies there the bound $|Q(x)| \le 2 \sum_{j=1}^\infty\varkappa_j$. 
\end{corollary}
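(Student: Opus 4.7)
The plan is to exploit the operator-monotonicity bound $S_x := A^2 e^{2xK} + \Gamma \ge \Gamma$ to transfer the estimate on $\|\Gamma^{-1/2}\bm{\varkappa}\|^2$ already established in Lemma~\ref{lem:form.Gamma1/2} to the quantity $Q(x) = \|S_x^{-1/2}\bm{\varkappa}\|^2$. The argument closely parallels the one used in the proof of that lemma, with $\Gamma$ replaced by the larger operator $S_x$ and with the finite-section step no longer needed.

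First, I would check that $S_x$ is a bona fide positive self-adjoint operator for every fixed $x\in\bR$. The operator $A^2 e^{2xK}$ is diagonal in the standard basis of $H$ with non-negative entries $\alpha_j^2 e^{2x\varkappa_j}$ and is therefore self-adjoint (possibly unbounded, in case the sequence $\alpha_j = \varkappa_j/m_j$ fails to be bounded); adding the bounded positive operator $\Gamma$ (bounded and of trace class by Lemma~\ref{lem:form.gamma}) preserves self-adjointness on the same domain. Thus $S_x$ is a positive self-adjoint operator, and the form inequality $S_x \ge \Gamma$ follows from $A^2 e^{2xK}\ge 0$.

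Next, for every $t>0$ the corresponding resolvents satisfy $tI+S_x \ge tI+\Gamma \ge tI$, so operator monotonicity of the inverse on positive operators yields $(tI+S_x)^{-1} \le (tI+\Gamma)^{-1}$ as bounded self-adjoint operators. Testing against $\bm{\varkappa}$ and combining with Lemma~\ref{lem:form.Gamma1/2} gives
\[
  \langle (tI+S_x)^{-1}\bm{\varkappa},\bm{\varkappa}\rangle
     \;\le\; \langle (tI+\Gamma)^{-1}\bm{\varkappa},\bm{\varkappa}\rangle
     \;\le\; \|\Gamma^{-1/2}\bm{\varkappa}\|^2
     \;\le\; 2\sum_{j=1}^\infty \varkappa_j,
\]
uniformly in $t>0$. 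Passing to the limit $t\to 0+$ and appealing to Proposition~\ref{pro:app.domain12} (which identifies $\dom(S_x^{-1/2})$ with the set of vectors whose resolvent quadratic form remains bounded as $t\to 0+$, and the corresponding supremum with $\|S_x^{-1/2}\cdot\|^2$) shows that $\bm{\varkappa}\in\dom(S_x^{-1/2})$ and that
\[
  Q(x) \;=\; \|S_x^{-1/2}\bm{\varkappa}\|^2
        \;=\; \lim_{t\to 0+}\langle (tI+S_x)^{-1}\bm{\varkappa},\bm{\varkappa}\rangle
        \;\le\; 2\sum_{j=1}^\infty \varkappa_j.
\]
Since $Q(x)\ge 0$ as a squared norm, this is exactly the asserted bound $|Q(x)|\le 2\sum_{j\ge 1}\varkappa_j$, and well-definedness of $Q$ on the whole real line is obtained as a by-product.

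The only mildly delicate point is the possible unboundedness of $A^2 e^{2xK}$ when the sequence $(\alpha_j)$ is not bounded; I expect this to be the main thing to be careful about. However, the resolvent / form-sum machinery applies verbatim to positive self-adjoint operators regardless of boundedness, and the operator inequality $(tI+S_x)^{-1}\le (tI+\Gamma)^{-1}$ is immediate once $tI+S_x\ge tI+\Gamma>0$; no additional control on the spectrum of $A^2 e^{2xK}$ is required.
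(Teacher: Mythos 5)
Your argument is correct and is essentially the paper's own proof: both rest on the comparison $A^2e^{2xK}+\Gamma\ge\Gamma$ together with Lemma~\ref{lem:form.Gamma1/2}. The only difference is that the paper invokes Proposition~\ref{pro:app.inverse} directly to get $\dom(\Gamma^{-1/2})\subset\dom\bigl((A^2e^{2xK}+\Gamma)^{-1/2}\bigr)$ and the norm bound in one step, whereas you re-derive that implication via the regularized resolvents $(tI+\,\cdot\,)^{-1}$ and Proposition~\ref{pro:app.domain12}.
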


\begin{proof}
	For every $x\in\bR$, the operator $A^2 e^{2xK}$ is positive, so that by Proposition~\ref{pro:app.inverse} we get
	\[
	(A^2 e^{2xK} + \Gamma)^{-1} \le \Gamma^{-1}.
	\]
	Therefore, $\dom(\Gamma^{-1/2}) \subset \dom\bigl((A^2 e^{2xK} + \Gamma)^{-1/2}\bigr)$ and
	\[
	Q(x) = \|(A^2 e^{2xK} + \Gamma)^{-1/2} \bm{\varkappa}\|^2 \le \|\Gamma^{-1/2} \bm{\varkappa} \|^2 \le 2 \sum_{j=1}^\infty\varkappa_j
	\]
	as claimed.
\end{proof}

\subsection{Proof of Theorem~\ref{thm:form.refl}}

We give separate proofs of the theorem in two different cases, the first one when the operator~$A$ is uniformly positive and the second one for an arbitrary positive~$A$.
In both cases, we construct a sequence of classical reflectionless potentials that converges to $q$ in the $L_1(\bR)$-topology and guarantees that $T_q$ has the required negative spectrum and norming constants. The first case is much simpler and more straightforward, and that is the reason why we include it as well.

\medskip

\noindent\textbf{Case~1:} The operator $A$ is uniformly positive. For each $n\in\bN$, we denote by $q_n$ the classical reflectionless potential corresponding to the eigenvalues $-\varkappa_1^2< -\varkappa_2^2 < \dots < - \varkappa_n^2$ and the norming constants $m_1, m_2, \dots, m_n$. Then the sequence of potentials $(q_n)_{n\in\bN}$ satisfies the condition $A(\bm{\varkappa})$ of Definition~\ref{def:pre.condA} and thus there is a subsequence $(q_{n_k})_{k\in\bN}$ that converges uniformly on compact subsets of~$\bR$ as well as in the topology of~$L_1(\bR)$ to some reflectionless potential~$q_0$. 

We shall prove below that the corresponding sequence of bounded functions $Q_n$ of~\eqref{eq:form.Q-classical}, 
\[
	Q_n(x) = \langle (A_n^2 e^{2x K_n} + \Gamma_n)^{-1} \bm{\varkappa}_n, \bm{\varkappa}_n \rangle_{\bC^n}
\]
converges pointwise on~$\bR$ to the function~$Q$ of~\eqref{eq:form.Q}. As also, for every $x \in \bR$, 
\[
	Q_{n_k}(x) := - \frac12\int_x^\infty q_{n_k}(t)\,dt \to - \frac12\int_x^\infty q_{0}(t)\,dt =: Q_0(x),
\]
we conclude that $Q(x) = Q_0(x)$
%\[
%	Q(x) = - \frac12\int_x^\infty q_{0}(t)\,dt
%\]
and thus that 
\[
	q(x) = 2 Q'(x) = 2 Q_0'(x) = q_0(x).
\]
Convergence of $q_{n_k}$ to $q$ in $L_1(\bR)$ now implies that the operator $T_q$ has the required scattering data: it is reflectionless, its negative spectrum coincides with the eigenvalues $-\varkappa_n^2$, $n \in \bN$, and the corresponding norming constants are~$m_n$. 

Therefore, it remains to prove that $Q_n$ converge to $Q$ pointwise on~$\bR$. As the operator~$A^2e^{2xK}+ \Gamma$ is uniformly positive, its inverse is bounded and thus
\begin{equation}\label{eq:form.Q-positive-A}
	Q(x) = \langle (A^2 e^{2xK} + \Gamma)^{-1} \bm{\varkappa}, \bm{\varkappa} \rangle.
\end{equation}
Fix an arbitrary $x\in\bR$ and set for brevity $B:=A^2e^{2xK}$. Since $\|P_n\Gamma P_n - \Gamma\| \to 0$ as $n\to\infty$ and the operators $B + \Gamma$ and $B + P_n\Gamma P_n$ are uniformly positive and thus boundedly invertible, we conclude that the inverse operators converge in norm, i.e., that
\[
	\| (B + P_n\Gamma P_n)^{-1} - (B + \Gamma)^{-1} \| \to 0
\]
as $n\to\infty$. As a result, 
\begin{multline*}
	|Q_n(x) - Q(x)| 
		\le |\langle (B + P_n\Gamma P_n)^{-1} P_n\bm{\varkappa}, P_n\bm{\varkappa} \rangle - 
			\langle (B + \Gamma)^{-1}   P_n \bm{\varkappa},  P_n \bm{\varkappa} \rangle |  \\
		+  | \langle (B + \Gamma)^{-1}   P_n \bm{\varkappa},  P_n \bm{\varkappa} \rangle 
			- \langle (B + \Gamma)^{-1} \bm{\varkappa}, \bm{\varkappa} \rangle| \to 0,
\end{multline*}
and the proof is complete.

\medskip

\noindent\textbf{Case~2:} $A$ is strictly positive, but not uniformly positive. In that case the vector $\bm{\varkappa}$ need not be in the domain of the inverse operator~$(A^2e^{2xK} + \Gamma)^{-1}$ and thus we do not have representation~\eqref{eq:form.Q-positive-A}. Therefore, we shall make use of a different approximating sequence~$Q_n$. 

As we showed above, the operator $\Gamma$ is positive and compact and thus its finite-rank approximations $P_n\Gamma P_n$ converge to $\Gamma$ in norm. We set 
\[
	\gamma_n := \|\Gamma - P_n \Gamma P_n\|^{1/2};
\]
then $\gamma_n$ are positive for all $n\in\bN$, converge to $0$ as $n\to\infty$, and 
\begin{equation}\label{eq:form.Gamma}
	\Gamma  \le P_n \Gamma P_n + \gamma_n^2 I. 
\end{equation}
Set now 
\[
	Q_n(x) = \bigl\langle \bigl( (\gamma_n I + A^2)e^{2xK} +  P_n\Gamma P_n\bigr)^{-1} P_n \bm{\varkappa} , P_n \bm{\varkappa} \bigr\rangle
		=  \langle (\wt A_n^2 e^{2xK_n} + \Gamma_n)^{-1} \bm{\varkappa}_n , \bm{\varkappa}_n \rangle_{\bC^n},
\]
with $\wt A_n^2$ denoting the restriction of the operator $\gamma_n I + A^2$ onto $P_n H$. 
Then $q_n:=2Q_n'$ is a classical reflectionless potential corresponding to the negative eigenvalues $-\varkappa_1^2 < -\varkappa_2^2 < \dots < - \varkappa_n^2$ and norming constants $m_{j,n}:= \varkappa_j/(\gamma_n + \alpha^2_j)^{1/2}$, $j=1,2,\dots,n$. Therefore, the sequence $(q_n)_{n\in\bN}$ satisfies the assumption $A(\bm{\varkappa})$ of Section~\ref{sec:prelim} and by Lemma~\ref{lem:pre.condA} there exists a subsequence $(q_{n_k})_{k\in\bN}$ of  $(q_n)_{n\in\bN}$ that converges in the topology of the space $L_1(\mathbb{R})$ to some reflectionless potential $q_0\in\cQ_1$ of norm $\|q\|_1=4\sum_{j=1}^\infty\varkappa_j$.
We next observe that
$$  
	\varkappa_j(q)=\lim\limits_{k\to\infty}\varkappa_j(q_{n_k})=\varkappa_j, \qquad
	m_j(q)=\lim\limits_{k\to\infty} m_j(q_{n_k})=\varkappa_j/\alpha_j = m_j,
$$
so that the operator $T_{q_0}$ possesses the required negative eigenvalues and norming constants.
This also implies that for all  $x\in\mathbb{R}$,
$$   
\lim_{k\to \infty} Q_{n_k}(x)=-\frac12\lim_{k\to \infty} \int_x^\infty q_{n_k}(t)
= 	-\frac12 \int_x^\infty q_0(t)=: Q_0(x),
$$
and it remains to prove that $Q_0$ coincides with $Q$. 

We fix $x\in\bR$ and, for every $\eps>0$ and $n\in\bN$, set 
\begin{align*}
	Q_n(x;\eps) &:= \langle (\eps P_n + \wt A_n^2e^{2xK} + \Gamma_n)^{-1} \bm{\varkappa}_n ,\bm{\varkappa}_n \rangle_{\bC^n} \\
		&\,= \bigl\langle \bigl(\eps I + (\gamma_n I + A^2)e^{2xK} +  P_n\Gamma P_n\bigr)^{-1} P_n \bm{\varkappa} , P_n \bm{\varkappa} \bigr\rangle
\end{align*}
and observe that $Q_n(x;\eps) \le Q_n(x)$. Since the operators $\eps I + (\gamma_nI + A^2)e^{2xK} +  P_n\Gamma P_n$ are uniformly positive and converge, as $n\to\infty$, in the operator norm to the uniformly positive operator $\eps I + A^2e^{2xK} + \Gamma$, arguments similar to those of Case~1 give
\[
	\lim_{n\to\infty} Q_{n}(x;\eps) = Q_0(x;\eps) := \langle (\eps I + A^2e^{2xK} +  \Gamma)^{-1} \bm{\varkappa} , \bm{\varkappa} \rangle. 
\]
Passing to the limit over the subsequence $n_k$ in the inequality $Q_n(x;\eps) \le Q_n(x)$, we get that $Q_0(x;\eps) \le Q_0(x)$. By Proposition~\ref{pro:app.domain12}, 
\[
	\lim_{\eps \to 0+} Q_0(x;\eps) = \lim_{\eps \to 0+} \langle (\eps I + A^2e^{2xK} +  \Gamma)^{-1} \bm{\varkappa}, \bm{\varkappa} \rangle 
		= Q(x) 
\]
yielding the inequality $Q(x) \le Q_0(x)$. 

To prove the reverse inequality, we again fix $x\in\bR$ and observe that the operator~$e^{2xK}$ is uniformly positive. We denote by $\delta>0$ its lower bound; then by~\eqref{eq:form.Gamma}
\begin{align*}
	A^2e^{2xK} + \Gamma &\le  \gamma_n^2I + A^2e^{2xK} + P_n\Gamma P_n 
		\le  {\gamma_n^2}e^{2xK}/{\delta} + A^2e^{2xK} + P_n\Gamma P_n \\
		&\le (1 + \gamma_n/\delta) \bigl((\gamma_n I +A^2)e^{2xK} + P_n\Gamma P_n\bigr).
\end{align*}
Taking the inverses and recalling that 
\[
	Q_n(x) = \bigl\langle \bigl((\gamma_n I + A^2)e^{2xK} +  P_n\Gamma P_n\bigr)^{-1} P_n \bm{\varkappa} , P_n \bm{\varkappa}  \bigr\rangle
		\le \bigl\langle \bigl((\gamma_n I + A^2)e^{2xK} +  P_n\Gamma P_n\bigr)^{-1} \bm{\varkappa} , \bm{\varkappa} \bigr\rangle,
\]
we conclude by Proposition~\ref{pro:app.inverse} that 
\[
	Q_n(x) \le (1 + \gamma_n/\delta) Q(x).
\]
Passing to the limit over the subsequence $n_k$ results in the required inequality $Q_0(x) \le Q(x)$. The proof is complete.

\begin{remark}\label{rem:Marchenko}
	In the proof of Theorem~\ref{thm:form.refl}, we constructed a sequence of classical reflectionless potentials in $B(-\varkappa_1^2)$ converging to the potential~$q$ of~\eqref{eq:form.Q}--\eqref{eq:form.q} in the sense of uniform convergence on compact subsets of~$\bR$. Therefore, the potential~$q$ belongs to the set $\overline{B(-\varkappa_1^2)}$ of generalised reflectionless potentials in the sense of Marchenko~\cite{Mar91}.
\end{remark}

%%%%%%%%%%%%%%%%%%%%%%%%%%%%%%%%%%%%%%%%%%%%%
%

\section{Reflectionless potentials: uniqueness from three spectra}\label{sec:uniqueness}
%\section{Uniqueness from three spectra}

%%%%%%%%%%%%%%%%%%%%%%%%%%%%%%%%%%%%%%%%%%%%%%%%%%%%%

In this section, we establish an analogue of the so-called inverse problem of reconstructing the potential from three spectra. Namely, along with the whole line Schr\"odinger operator $T_q$, we consider two half-line operators $T_q^+$ and $T_q^-$ on $\bR_+$ and $\bR_-$, respectively, generated by the differential expression~$-d^2/dx^2 + q$ and the Dirichlet boundary condition at $x=0$. It turns out that, similarly to a finite-interval case~\cite{GesSim,Piv,HM3sp}, the discrete spectra of these operators uniquely determine a generic (in the sense of Subsection~\ref{ssec:q-generic} below) reflectionless potential~$q$. In addition of being of independent interest, this result is essentially used to justify formula~\eqref{eq:3-sp} for norming constants. This formula, in turn, reduces the question on uniqueness of reflectionless Schr\"odinger operators with given spectral data to that with the three spectra. 

We start by explaining what a generic reflectionless potential is, then recall the Marchenko characterization~\cite{Mar91,Remling} of the reflectionless potentials in terms of the corresponding Borel measures and, finally, show that the three negative spectra recover uniquely the Weyl--Titchmarsh functions $m_\pm$ of $T_q$, and thus the potential~$q$ by the classical Borg--Marchenko theorem~\cite{Borg, Mar50, Mar52}. 

\subsection{Special vs generic potentials}\label{ssec:q-generic}

We call a potential $q \in \cQ_1$ \emph{special} if the negative spectra of $T_q$ and $T_q':=T_q^-\oplus T_q^+$ have nonvoid intersection. In other words, there is $-\varkappa_n^2$ that is also an eigenvalue of either $T_q^+$ or $T_q^-$. Observe that $e_\pm(\cdot, \pm i\varkappa_n; q)$ are the only (up to a multiplicative constant) solutions of the equation 
\[
- y'' + qy = -\varkappa_n^2 y
\]
that are integrable on $\bR_\pm$ and that $e_+(\cdot, i\varkappa_n; q)$ and $e_-(\cdot, -i\varkappa_n; q)$ are in fact proportional. Therefore, we see that then $e_\pm(0,\pm i\varkappa_n;q) = 0$ and thus the number $-\varkappa_n^2$ is an eigenvalue of both $T_q^+$ and $T_q^-$. 

Potentials in $\cQ_1$ that are not special are called \emph{generic}. It follows from the above considerations that $q$ is generic if and only if the equality $e_+(0,\lambda;q)=0$ holds for no $\lambda = i\varkappa_n$. 

For $\tau\in\bR$, set $q_\tau(x):= q(x+\tau)$ to be the left shift by $\tau$ of the potential $q$. The left shift does not change the spectrum of the operator $T_q$ but changes the spectrum of $T_q'$. 

\begin{lemma}
	There is at most countable set of $\tau\in\bR$ for which $q_\tau$ is special. 
\end{lemma}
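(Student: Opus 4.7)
The plan is to translate the condition ``$q_\tau$ is special'' into a condition on zeros of certain Jost solutions of the unshifted potential~$q$, and then to observe that each such Jost solution can have only countably many zeros on $\bR$.

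First, I would note that a left shift does not change the negative spectrum of the whole-line operator, since $T_{q_\tau}$ is unitarily equivalent to $T_q$ via the translation unitary; so the set $\{-\varkappa_n^2\}$ of eigenvalues is the same for $T_q$ and $T_{q_\tau}$. Next, by uniqueness of the Jost solution and the shift $x\mapsto x+\tau$, we have
\[
	e_+(x,i\varkappa_n;q_\tau) = e_+(x+\tau,i\varkappa_n;q),
\]
so by the discussion preceding the lemma, $q_\tau$ is special if and only if $e_+(0,i\varkappa_n;q_\tau)=0$ for some~$n$, i.e.\ if and only if
\[
	\tau \in Z_n := \{x\in\bR : e_+(x,i\varkappa_n;q)=0\}
\]
for some $n\in\bN$. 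Thus the set of ``special'' shifts coincides with $\bigcup_{n\in\bN} Z_n$.

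Now each $e_+(\cdot,i\varkappa_n;q)$ is a non-trivial (real-valued, since $q$ is real and $\lambda=i\varkappa_n$ yields real exponential asymptotics at $+\infty$) solution of the linear second-order ODE $-y''+qy=-\varkappa_n^2 y$ on~$\bR$. By the standard uniqueness theorem for such equations, if $y(x_0)=y'(x_0)=0$ at some point~$x_0$, then $y\equiv 0$; therefore the zero set of a non-trivial solution cannot have an accumulation point in~$\bR$. Hence each $Z_n$ is a discrete subset of~$\bR$, in particular at most countable.

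Finally, since the sequence $(\varkappa_n)$ itself is (at most) countable, the set $\bigcup_{n\in\bN} Z_n$ of special shifts is a countable union of countable sets and thus countable. The only place some care is needed is to make sure that $e_+(\cdot,i\varkappa_n;q)$ is genuinely non-trivial---but this is immediate from the normalization $\lim_{x\to+\infty}e^{\varkappa_n x}e_+(x,i\varkappa_n;q)=1$, which forbids the solution from vanishing identically. I do not anticipate any real obstacle; the argument is essentially a discreteness-of-zeros observation for linear ODEs combined with countability of the point spectrum.
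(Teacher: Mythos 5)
Your argument is correct and follows essentially the same route as the paper: reduce ``$q_\tau$ is special'' to $\tau$ being a real zero of some Jost solution $e_+(\cdot,i\varkappa_n;q)$ and then count those zeros. Two minor remarks: the shift identity should carry a normalizing factor, $e_+(x,i\varkappa_n;q_\tau)=e^{\varkappa_n\tau}e_+(x+\tau,i\varkappa_n;q)$, which is immaterial for the zero set; and where you invoke discreteness of zeros of a nontrivial solution of a second-order linear ODE, the paper uses the Sturm oscillation theorem to get the stronger statement that $e_+(\cdot,i\varkappa_n;q)$ has at most $n$ real zeros --- either suffices for countability.
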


\begin{proof}
	As was explained above, $q_\tau$ is special if and only if $e_+(0, i\varkappa_n;q_\tau)=0$ for some $n\in\bN$. It follows from uniqueness of the Jost solution that 
	\[
	e_+(x,i\varkappa_n;q_\tau) = e^{\varkappa_n\tau} e_+(x+\tau, i\varkappa_n;q),
	\]
	so that $e_+(0, i\varkappa_n;q_\tau)=0$ is equivalent to $e_+(\tau, i\varkappa_n;q)=0$. As a result, $q_\tau$ is special if and only if $\tau$ is a real zero of at least one Jost solution $e_+(\cdot, i\varkappa_n;q)$. It follows from the Sturm oscillation theorem that, for each~$n\in\bN$, the Jost solution $e_+(\cdot, i\varkappa_n;q)$ has at most $n$ real zeros. Being the union of such zeros over all $n\in\bN$, the set of those $\tau\in\bR$ for which $q_\tau$ is special is at most countable.
\end{proof}

As a corollary, we see that for any countable set~$S$ of potentials there are infinitely many $\tau\in\bR$ such that the left shift by $\tau$ makes all potentials in $S$ generic. In particular, there is no loss of generality to assume that the potential $q$ is generic; indeed, otherwise we just replace $q$ by appropriate $q_\tau$. This will not change the discrete spectrum $-\varkappa_n^2$ of $T_q$, while the norming constant $m_n$ will get multiplied by $e^{-\varkappa_n \tau}$ (see the proof of the above lemma).

\subsection{$m$-functions and representation of reflectionless potentials}\label{ssec:3sp-reflectionless}

In this subsection, we discuss some particulars of the approach to reflectionless Schr\"odinger operators originally due to Marchenko~\cite{Mar} and then elaborated by Hur, McBride, and Remling~\cite{Remling}. 

According to the classical Weyl theory, if a real-valued potential $q$ is locally integrable and is in the limit point case at $\pm\infty$, then for every non-real $z$ the equation
\[
	-y'' + qy = zy
\]
has unique (up to scalar factors) Weyl solutions $\psi_\pm(\cdot;z)$ that are square integrable at $\pm\infty$. The functions
\[
	m_\pm(z):= \pm\frac{\psi'_\pm (0,z)}{\psi_\pm (0, z)}
\]
are then called the \emph{Weyl--Titchmarsh} $m$-functions of the operators~$T^\pm_q$. Since for integrable $q$ the Jost solutions have the required integrability properties, the Weyl--Titchmarsh $m$-functions are then equal to 
\begin{equation}\label{eq:3sp-m}
	m_\pm(z) = \pm\frac{e'_\pm(0,\pm\sqrt{z};q)}{e_\pm(0,\pm\sqrt{z};q)}.
\end{equation}
The $m$-functions $m_\pm$ are known to be analytic in $\bC\setminus\bR$; moreover, they are Herglotz functions, i.e., map $\bC_\pm$ into $\bC_\pm$. 

Further, properties of the Jost solutions $e_+(\cdot,z;q)$ established in Section~\ref{sec:prelim} guarantee that $m_+$ has a meromorphic extension into the domain $\bC \setminus \bR_+$. The points $-\varkappa^2<0$ for which $e_+(0,i\varkappa;q) = 0$ are the poles of this extension; clearly, such points are the eigenvalues of the operator~$T_q^+$. Also, $m_+$ possesses finite limit values on $\bR_+$ from above and from below that are equal to 
\[
	m_+(k^2\pm i0) = \lim_{\eps\to0+} m_+(k^2\pm i\eps) = \frac{e'_+(0,\pm k;q)}{e_+(0,\pm k;q)}. 
\]
Likewise, $m_-$ can be extended meromorphically into the domain $\bC\setminus\bR_+$, with poles at the points~$-\varkappa^2<0$ that are eigenvalues of the operator~$T_q^-$; $m_-$ also possesses limit values on $\bR_+$ from above and from below equal to 
\[
	m_-(k^2\pm i0) = \lim_{\eps\to0+} m_-(k^2 \pm i\eps) = - \frac{e'_-(0,\mp k;q)}{e_-(0,\mp k;q)}. 
\] 
As $T_q$ is reflectionless, the scattering coefficient $b$ vanishes for all real non-zero $k$, whence 
\[
	e_+(x,k;q) = a(k) e_-(x,k;q)
\]
for such $k$ in view of~\eqref{eq:pre.ab}. 
Taking logarithmic derivatives of the above functions at $x=0$, we conclude that the $m$-functions $m_\pm$ of reflectionless Schr\"odinger operator~$T_q$ satisfy 
the relation
\begin{equation}\label{eq.2A}
	m_+(k\pm i0) = - \overline{m_-(k\pm i0)},  \qquad k\in\bR_+. 
\end{equation}
We note that the authors of~\cite{Remling} take relation~\eqref{eq.2A} as their starting point: they say that a real-valued function $q$ is a (generalized) reflectionless potential on a set $S\subset \bR_+$ if $q$ is locally integrable, the corresponding Schr\"odinger operator $T_q$ is in the limit point case at $\pm\infty$ and the $m$-functions $m_\pm$ satisfy~\eqref{eq.2A} almost everywhere on~$S$. 

Using the Schwarz reflection principle, one can combine $m_+$ and $m_-$ into single-valued functions $M_\pm$ that are defined on $\bC_+ \cup \bR_+ \cup \bC_-$, are analytic there, and take values in $\bC_\pm$; for non-real $z$, $M_\pm$ are defined via
\[
	M_+(z) = \begin{cases}
		m_+(z), &\qquad z \in \bC_+;\\
		-\overline{m_-(\overline{z})}, &\qquad z \in \bC_-;
	\end{cases} 
	\qquad 
	M_-(z) = \begin{cases}
		-\overline{m_-(\overline{z})}, &\qquad z \in \bC_+;\\
		m_+(z), &\qquad z \in \bC_-.
	\end{cases} 
\] 
Since $m_\pm$ can be extended to meromorphic functions over $\bC\setminus\bR_+$, we see that in fact $M_\pm$ are just two univalent branches of a meromorphic function~$M$ defined on a Riemannian two-sheeted manifold of $\sqrt{z}$. The change of variables $z \mapsto -z^2$ defines now a univalent function
\[
	n_q(z) := -M(-z^2)
\]
fixed by the condition that $n_q(k) = -m_+(-k^2)$ for large positive $k$; 
moreover, $n_q$ is a Herglotz function that is meromorphic in the whole complex plane outside the origin. The poles of $n_q$ are all real; more precisely, its positive poles $\xi$ come from the poles $-\xi^2$ of $m_+$, while its negative poles $\xi$ correspond to the poles $-\xi^2$ of $m_-$. In particular, $n_q$ is analytic outside a circle of radius $\varkappa_1$. 

Being a Herglotz function, $n_q$ possesses a special integral representation; using the known asymptotics of $n_q$, this can be specified as
\begin{equation}\label{eq.3A}
	n_q(z)=z +\int\frac{d\nu(t)}{t-z},  \qquad 
				z\in\bC\setminus\opn{supp}(\nu),
\end{equation}
for some discrete Borel measure $\nu = \nu_q$ of compact support. 

It turns out~\cite{Mar,Remling} that~\eqref{eq.3A} can be used to characterize all generalized reflectionless potentials. Namely, denote by $\cM$ the set of all finite non-negative Borel measures on~$\bR$ of compact support. If a potential~$q$ is reflectionless in the sense of~\eqref{eq.2A}, then (cf.~\cite{Mar, Kotani, Remling}) there exists a unique measure $\nu\in\cM$ such that the induced Herglotz function $n_q$ of~\eqref{eq.3A} is related to the Weyl--Titchmarsh functions~$m_\pm$ of~$T_q$ via
\begin{equation}\label{eq.m_pm-via-n}
	-m_+(-z^2;q)=   n_q(z) ,  \quad
	m_-(-z^2;q)=   n_q(-z) ,  \qquad    0<\arg z <\pi/2.                                                            
\end{equation}
Vice versa, for any measure $\nu\in \cM$, one introduces a Herglotz function $n_q$ via~\eqref{eq.3A} and defines the functions $m_\pm$ via~\eqref{eq.m_pm-via-n}; then~\cite{Remling} these are the $m$-function for some reflectionless Schr\"odinger operator $T_q$. Since $T_q$ is uniquely reconstructed from its Weyl--Titchmarsh $m$-functions by the classical Borg--Marchenko uniqueness theorem~\cite{Borg, Mar50, Mar52}, it follows that the mapping 
\begin{equation}\label{eq.5A}
	\cQ_1\ni q \mapsto \nu\in \cM
\end{equation}
is bijective~\cite{Mar,Remling}. In the next subsection, we show that $\nu$ for reflectionless $q\in\cQ_1$ is, in turn, uniquely determined by the discrete spectra of $T_q$, $T_q^+$, and $T_q^-$.

\subsection{Reconstruction of $n_q$ from three spectra}

We now turn to the question how the Herglotz function $n_q$ is related to the discrete spectra of the operators $T_q$ and $T_q^\pm$. 

Assume therefore that $q\in\cQ_1$ is a reflectionless potential and that $-\varkappa_n^2$ and $-\mu_n^2$ are eigenvalues of the operators $T_q$ and $T_q'$ respectively. We assume that the potential $q$ is generic in the sense of Subsection~\ref{ssec:q-generic}; by the minmax principle~\cite{Simon}, the two sequences then strictly interlace, viz.
\begin{equation}\label{eq:3sp.interlace}
-\varkappa_1^2 < -\mu_1^2 < -\varkappa_2^2 < - \mu_2^2 < \dots
\end{equation}
We recall that $\pm\mu_j>0$ if $-\mu_j^2$ is an eigenvalue of the operator $T_q^\pm$. Next, the corresponding measure~$\nu$ is discrete and bears point masses at the points $\mu_n$ constructed from the eigenvalues of the operators~$T_q^+$ and $T_q^-$ and, possibly, at the point $k=0$. Denoting these masses by~$d_n$, we see that 
\[
		\nu = \sum_{j=0}^\infty d_j \delta_{\mu_j}, \qquad
		n_q(z) = z - \frac{d_0}z + \sum_{j=1}^{\infty}\frac{d_j}{\mu_j-z}
\]
with $d_0=0$ if $\nu(\{0\})=0$ and $d_n>0$ for $n>0$; here $\delta_\mu$ is the Dirac point measure at a point~$\mu$. 

We next consider the auxiliary Herglotz function	
\begin{equation}\label{eq:3sp.Rmu}
	R_{\nu}(z):=1 + \int \frac{d\nu(t)}{t^2 - z} = 1 - \frac{d_0}{z} + \sum_{j=1}^{\infty} \frac{d_j}{\mu_j^2-z}
\end{equation}
and observe that for every $z \in \bC_+$ the following relation holds:
\begin{equation}\label{eq:3sp-R}
	2zR_{\nu}(z^2) = n_q(z) - n_q(-z) = - m_+(-z^2;q) - m_-(-z^2;q).
\end{equation}

\begin{lemma}\label{lem:3sp.R-zero}
	Assume that $q\in \cQ_1$ corresponds to the measure $\nu\in\cM$ and that $\xi\in\bR_+$ is such that $\pm\xi\notin\supp\nu$. Then the number $\lambda=-\xi^2$ is an eigenvalue of the operator~$T_q$ if and only if $\xi^2$ is a zero of the function $R_{\nu}$.
\end{lemma}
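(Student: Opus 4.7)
\medskip

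\noindent\textbf{Proof plan.} The plan is to combine two ingredients: a characterization of the eigenvalues of $T_q$ in terms of the half-line $m$-functions $m_\pm$, and the algebraic identity between $R_\nu$ and $n_q$ already recorded in~\eqref{eq:3sp-R}.

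As the first step, I would show that, under the hypothesis $\pm\xi \notin \supp\nu$, the number $-\xi^2$ is an eigenvalue of $T_q$ if and only if
\[
	m_+(-\xi^2) + m_-(-\xi^2) = 0.
\]
The assumption $\xi \notin \supp\nu$ ensures via~\eqref{eq.m_pm-via-n} (extended from the sector $0<\arg z<\pi/2$ to real positive $z=\xi$ by analytic continuation) that $m_+(-\xi^2) = -n_q(\xi)$ is a finite number; analogously, $-\xi \notin \supp\nu$ guarantees that $m_-(-\xi^2) = n_q(-\xi)$ is finite. By~\eqref{eq:3sp-m}, this translates to $e_+(0,i\xi;q)\ne 0$ and $e_-(0,-i\xi;q)\ne 0$. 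On the other hand, $-\xi^2$ is an eigenvalue of $T_q$ precisely when the Jost solutions $e_+(\cdot,i\xi;q)$ and $e_-(\cdot,-i\xi;q)$ are proportional, each being (up to a scalar factor) the unique $L_2$-solution of $-y''+qy=-\xi^2 y$ at its respective end; under the non-vanishing at $x=0$ just established, this proportionality is equivalent to the coincidence at $x=0$ of the logarithmic derivatives of the two Jost solutions, i.e., to $m_+(-\xi^2)+m_-(-\xi^2)=0$.

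Combining this equivalence with~\eqref{eq.m_pm-via-n} and~\eqref{eq:3sp-R} evaluated at real $z=\xi$, I obtain
\[
	m_+(-\xi^2) + m_-(-\xi^2) = -n_q(\xi) + n_q(-\xi) = -2\xi R_\nu(\xi^2);
\]
since $\xi>0$, the right-hand side vanishes if and only if $R_\nu(\xi^2)=0$, which completes the proof.

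The main technical point, and the only real obstacle I anticipate, is justifying the passage of~\eqref{eq.m_pm-via-n} from the open sector $0<\arg z<\pi/2$, in which it is originally stated, to the real positive point $z=\xi$. I expect this to follow by the identity principle applied to the two functions $-m_+(-z^2)$ and $n_q(z)$ of the variable $z$: both are meromorphic in a neighbourhood of $\xi$ (thanks to the meromorphic extension of $m_+$ across $\bR_-$ and to the assumption $\pm\xi\notin\supp\nu$) and agree on an open set, so they must coincide at $z=\xi$ as well; the corresponding statement for $m_-$ is analogous.
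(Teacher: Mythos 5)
Your proposal is correct and follows essentially the same route as the paper: the paper likewise deduces $e_\pm(0,\pm i\xi;q)\ne 0$ from $\pm\xi\notin\supp\nu$, identifies $2\xi R_\nu(\xi^2)$ with the difference of logarithmic derivatives of the two Jost solutions at $x=0$ via~\eqref{eq:3sp-R}, and characterises the eigenvalue $-\xi^2$ by linear dependence of $e_+(\cdot,i\xi;q)$ and $e_-(\cdot,-i\xi;q)$. Your reformulation through $m_\pm$ and the explicit appeal to the identity principle for extending~\eqref{eq.m_pm-via-n} to $z=\xi$ are only cosmetic refinements of the same argument.
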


\begin{proof}
	As $\pm\xi\notin\supp\nu$, we find that 
	\[
	e_+(0,i\xi;q)\ne 0, \qquad e_-(0,-i\xi;q)\ne 0
	\]
	and thus by~\eqref{eq:3sp-R}
	\[
	-\frac{e'_+(0,i\xi;q)}{e_+(0,i\xi;q)} +\frac{e'_-(0,-i\xi;q)}{e_-(0,-i\xi;q)}=2\xi R_{\nu}(\xi^2).
	\]
	We next note that a number $-\xi^2<0$ is an eigenvalue of the operator $T_q$ if and only if the Jost solutions $e_+(\cdot,i\xi;q)$ and $e_-(\cdot,-i\xi;q)$ are linearly dependent, i.e., when $R_{\nu}(\xi^2)=0$. The proof is complete. 
\end{proof}

Combining the above results, we can justify the uniqueness in the problem of reconstructing a reflectionless $q\in\cQ_1$ from three spectra. 

\begin{theorem}\label{thm:uniq-3sp}
	Assume that $q$ is a generic generalized reflectionless potential in~$\cQ_1$. Then the negative eigenvalues of the operators $T_q$, $T_q^+$, and $T_q^-$ uniquely determine~$q$. 
\end{theorem}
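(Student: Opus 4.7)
The plan is to invoke the Marchenko--Hur--McBride--Remling bijection $q\leftrightarrow\nu_q$ from Subsection~\ref{ssec:3sp-reflectionless} and reduce the theorem to uniqueness of the measure $\nu:=\nu_q\in\cM$: once $\nu$ is known, $n_q$ is determined by~\eqref{eq.3A}, the Weyl--Titchmarsh functions $m_\pm$ are recovered via~\eqref{eq.m_pm-via-n}, and the Borg--Marchenko theorem returns $q$. Two pieces of information about $\nu$ can be read off directly from the three spectra: the nonzero support of $\nu$ coincides with the set of $\mu_j$'s, where $-\mu_j^2$ ranges over the eigenvalues of $T_q^{\pm}$ (positive $\mu_j$ coming from $T_q^+$, negative from $T_q^-$), and, since $q$ is generic so that $\pm\varkappa_n\notin\supp\nu$, Lemma~\ref{lem:3sp.R-zero} identifies the $\varkappa_n^2$ as precisely the positive real zeros of the Herglotz function $R_\nu$ from~\eqref{eq:3sp.Rmu}.

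The remaining task is then to show that a Herglotz function $R$ that is meromorphic on $\bC$, satisfies $R(z)\to 1$ at infinity, has its non-zero poles exactly at $\{\mu_j^2\}$, and whose positive real zeros are exactly $\{\varkappa_n^2\}$, is uniquely determined by these data. I plan to do this by comparing any two such candidates $R$ and $\tilde R$: their ratio is meromorphic on $\bC\setminus\{0\}$ with all non-zero poles and zeros cancelling and with limit $1$ at infinity, hence is rational of the form $1+a/z$. Matching residues at each pole $\mu_j^2$ then forces $d_j=\tilde d_j(1+a/\mu_j^2)$; since $d_j,\tilde d_j>0$ for every $j$ and since the $\mu_j$ accumulate at $0$ (being interlaced with the summable sequence $\varkappa_j$), the sign of $1+a/\mu_j^2$ for small $\mu_j$ forces $a=0$, so $R=\tilde R$ and hence $\nu=\tilde\nu$. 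In the case of finitely many eigenvalues both candidates are rational of the same structure and the argument simplifies to matching leading coefficients.

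The main obstacle I foresee is the behaviour at $0$: whether $\nu$ carries a mass there (i.e.\ whether $d_0>0$) is a subtle feature of the measure, the points $\mu_j$ and $\varkappa_j$ both accumulate at $0$, and the ratio analysis near $0$ needs careful justification at this accumulation point (in particular, to exclude a mismatch in which one candidate has a pole at $0$ while the other does not). I expect to resolve this by combining the residue-sign argument above with the auxiliary results on special Herglotz functions collected in the appendix, possibly supplemented by reading off the presence or absence of a pole at $0$ from the interlacing pattern of the three spectra.
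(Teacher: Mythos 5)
Your proposal follows essentially the same route as the paper: reduce to uniqueness of the measure $\nu$ via the Marchenko--Remling correspondence and the Borg--Marchenko theorem, read the poles of $R_\nu$ off the spectra of $T_q^\pm$ and its zeros off the spectrum of $T_q$ via Lemma~\ref{lem:3sp.R-zero}, and then show that a function of the form \eqref{eq:3sp.Rmu} is determined by its zeros and poles --- this last step being exactly Theorem~\ref{thm:app-Herglotz-int-prod} of the appendix, which is what the paper invokes at this point. Two caveats about your sketched self-contained version of that step: first, analyticity of $R/\tilde R$ on $\bC\setminus\{0\}$ together with the limit $1$ at infinity does not by itself give the form $1+a/z$ (an essential singularity at the accumulation point $0$ must be excluded, and this is precisely where the appendix proof uses that the ratio is itself a Herglotz function, real on $\bR\setminus\{0\}$); second, your residue-sign argument $d_j=(1+a/\mu_j^2)\tilde d_j$ only rules out $a<0$, since for $a>0$ the factor stays positive for all $j$ --- to kill $a>0$ one should instead note that $1+a/z$ would vanish at $z=-a\neq 0$ while $R/\tilde R$ is zero-free off the origin (which is exactly how the appendix proof concludes $\alpha=0$). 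Both points are covered by Theorem~\ref{thm:app-Herglotz-int-prod}, so deferring to it as you anticipate closes the argument.
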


\begin{proof}
Given such a~$q\in\cQ_1$, we denote by $-\varkappa_1^2<-\varkappa_2^2< \dots$ the negative eigenvalues of the operator $T_q$ and by $-\mu_1^2 < -\mu_2^2 < \dots$ the negative eigenvalues of $T_q'= T_q^+ \oplus T_q^-$. 
	
Assume also that there is another generic reflectionless potential~$\tilde q \in \cQ_1$ such that the corresponding Schr\"odinger operators $T_{\tilde q}$, $T^+_{\tilde q}$, and $T^-_{\tilde q}$ have the same negative eigenvalues as $T_q$, $T^+_q$, and $T^-_q$, respectively. 
We denote by $\nu$ and $\tilde \nu$ the measures constructed as explained in Subsection~\ref{ssec:3sp-reflectionless} and by $R$ and $\tilde R$ the corresponding functions of~\eqref{eq:3sp-R}. By Lemma~\ref{lem:3sp.R-zero}, the numbers $\varkappa_j^2$ are all the non-zero real zeros of both $R$ and $\tilde R$, while $\mu^2_j$ are their poles. Applying Theorem~\ref{thm:app-Herglotz-int-prod}, we conclude that 
\[
	R(z) = \prod_{j=1}^\infty \frac{z^2 - \varkappa_j^2}{z^2 - \mu_j^2} = \tilde R(z);
\]
the same theorem now implies that the masses $d_j$ and $\tilde{d}_j$ of $\nu$ and $\tilde\nu$ at the points $\xi_j$, $j \ge 0$, coincide and thus the measures $\nu$ and $\tilde\nu$ coincide as well.

As a result, the Weyl--Titchmarsh $m$-functions for the operators~$T_q^\pm$ and $T_{\tilde q}^\pm$ coincide. Since by the Borg--Marchenko  
uniqueness theorem~\cite{Borg, Mar50, Mar52} the potential of the Schr\"odinger operator is determined uniquely by the corresponding Weyl--Titchmarsh $m$-functions, we conclude that $q=\tilde q$. The proof is complete.
\end{proof}

%%%%%%%%%%%%%%%%%%%%%%%%%%%%%%%%%%%%%%%%%%%%%%%%%%%
%
\section{Uniqueness theorem from spectral data}\label{sec:uniq-scattering}
%
%%%%%%%%%%%%%%%%%%%%%%%%%%%%%%%%%%%%%%%%%%%%%%%%%%%

In this section, we shall prove that every integrable reflectionless potentials is uniquely determined by its spectral data, i.e., the following uniqueness theorem:

\begin{theorem}\label{thm:uniq-sp}
	There is at most one reflectionless potential $q$ in $\cQ_1$ for which $T_q$ has prescribed spectral data $(\bm{\varkappa}, \mathbf{m})$.
\end{theorem}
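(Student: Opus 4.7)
The plan is to reduce the claim to the three-spectra uniqueness already established in Theorem~\ref{thm:uniq-3sp}, in three stages: reduction to the generic case by a translation, recovery of the Dirichlet half-line spectra from the norming constants via the three-spectra formula~\eqref{eq:3-sp}, and an injectivity argument coming from the interpolation result for Blaschke products (Theorem~\ref{thm:interpolation}).

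Suppose $q,\tilde q\in\cR_1$ share the spectral data $(\bm{\varkappa},\mathbf{m})$. By the analysis of Subsection~\ref{ssec:q-generic}, the set of $\tau\in\bR$ for which $q_\tau$ fails to be generic is at most countable, and similarly for $\tilde q$. Choosing $\tau$ outside the union of the two exceptional sets yields generic reflectionless potentials $q_\tau,\tilde q_\tau\in\cQ_1$; since translation preserves $\bm{\varkappa}$ and transforms each norming constant $m_n$ into $e^{-\varkappa_n\tau}m_n$, the translates $q_\tau$ and $\tilde q_\tau$ continue to share identical spectral data, and $q=\tilde q$ is equivalent to $q_\tau=\tilde q_\tau$. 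Therefore I assume, without loss of generality, that both $q$ and $\tilde q$ are generic.

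Denote by $\{-\mu_j^2\}$ and $\{-\tilde\mu_j^2\}$ the negative spectra of the Dirichlet sums $T_q'=T_q^+\oplus T_q^-$ and $T_{\tilde q}'=T_{\tilde q}^+\oplus T_{\tilde q}^-$, interlaced with $\{-\varkappa_j^2\}$ as in~\eqref{eq:3sp.interlace} (the sign of $\mu_j$ encoding whether the corresponding eigenvalue belongs to $T_q^+$ or to $T_q^-$). By Theorem~\ref{thm:uniq-3sp}, it suffices to prove $\mu_j=\tilde\mu_j$ for every $j$. Here I would invoke the three-spectra formula~\eqref{eq:3-sp}, which expresses each $m_n^2$ as a Blaschke-product-type function of the sequences $\{\varkappa_j\}$ and $\{\mu_j\}$; the equalities $m_n=\tilde m_n$ and $\varkappa_j=\tilde\varkappa_j$ then yield a countable system of identities simultaneously satisfied by the signed sequences $\{\mu_j\}$ and $\{\tilde\mu_j\}$. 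The interpolation theorem for Blaschke products (Theorem~\ref{thm:interpolation}) then forces the zeros $\{i\mu_j\}$ to be uniquely determined by the values prescribed at the nodes $\{i\varkappa_n\}$, giving $\mu_j=\tilde\mu_j$ for all $j$. Combined with $\varkappa_j=\tilde\varkappa_j$, this equates all three spectra of $T_q$ and $T_{\tilde q}$, and Theorem~\ref{thm:uniq-3sp} yields $q=\tilde q$.

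I expect the decisive difficulty to lie in the interpolation step. Because $\bm{\varkappa}\in\ell_1(\bN)$, the nodes $i\varkappa_n$ accumulate at the origin, so classical Carleson-type interpolation is not directly available; instead, the special algebraic form of the three-spectra formula, the positivity of the $m_n$, and the strict interlacing~\eqref{eq:3sp.interlace} of $\{\varkappa_j\}$ with $\{\mu_j\}$ must be leveraged. If a direct appeal to Theorem~\ref{thm:interpolation} proves awkward, a fallback is to verify uniqueness first for the classical reflectionless potentials from Section~\ref{sec:existence}, where the three-spectra formula is finite-dimensional and elementary, and then pass to the limit using the $L_1$-continuity of the map $q\mapsto\bm{\varkappa}$ from~\cite{HM-traces} and the continuity of norming constants furnished by Lemma~\ref{lem:pre.cont-norm}.
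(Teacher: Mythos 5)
Your proposal follows the paper's proof essentially verbatim: reduce to the generic case by a shift, use the three-spectra formula~\eqref{eq:3-sp} together with the fact that $a(\cdot;q)$ is the Blaschke product~\eqref{eq:Blaschke-a} determined by $\bm{\varkappa}$ alone to read off the values $B(i\varkappa_n)=-i\,m_n^{-2}/\dot a(i\varkappa_n)$, recover $(\mu_n)$ via Theorem~\ref{thm:interpolation}, and conclude with Theorem~\ref{thm:uniq-3sp}. Your worry about the interpolation step is unfounded: since $B(i\varkappa_n)=g_{\bm{\mu}}(\varkappa_n)$ and genericity gives the strict interlacing $\varkappa_j>|\mu_j|>\varkappa_{j+1}$, Theorem~\ref{thm:interpolation} applies directly and the fallback via classical approximants is unnecessary.
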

Recalling the existence result of Theorem~\ref{thm:form.refl}, we arrive at the following corollary:
\begin{corollary}\label{cor:one-to-one}
	Denote by $\ell_{1,+}(\bN)$ the set of all positive and strictly decreasing sequences in~$\ell_1(\bN)$ and by $\mathcal{R}_1$ the set of all reflectionless potentials in~$\cQ_1$ that are not classical. Then the scattering mapping 
	\[
	\mathcal{R}_1 \ni q \mapsto (\bm{\varkappa}, \mathbf{m}) \in \ell_{1,+}(\bN) \times \bR_+^{\bN}
	\]
	is one-to-one and onto. 
\end{corollary}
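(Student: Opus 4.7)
The plan is to reduce Theorem~\ref{thm:uniq-sp} to Theorem~\ref{thm:uniq-3sp} by showing that the spectral data $(\bm{\varkappa},\mathbf{m})$ uniquely determines the auxiliary sequence $(\mu_n)_{n\in\bN}$ of eigenvalues of $T_q' := T_q^+ \oplus T_q^-$. First I would reduce to the generic case: if $q,\tilde q\in\cQ_1$ are two reflectionless potentials sharing the spectral data $(\bm{\varkappa},\mathbf{m})$, the observation from Subsection~\ref{ssec:q-generic} shows that there is a common $\tau\in\bR$ such that both left shifts $q_\tau$ and $\tilde q_\tau$ are generic; since shifting multiplies each norming constant $m_n$ by the same factor $e^{-\varkappa_n\tau}$, the shifted potentials still carry identical spectral data. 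Hence we may assume without loss of generality that both $q$ and $\tilde q$ are generic.

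Next I would invoke the three-spectra formula~\eqref{eq:3-sp}, which expresses each norming constant $m_n$ as an explicit product (or Hadamard-type ratio) involving $\{\varkappa_j\}_{j\in\bN}$ and $\{\mu_j\}_{j\in\bN}$ through the genuinely interlacing pattern~\eqref{eq:3sp.interlace}. Because $\bm{\varkappa}$ is fixed, the sequence $(m_n)$ determines the sequence of values of a certain meromorphic function -- most naturally, the Blaschke-type product associated with the Herglotz function $R_\nu$ of~\eqref{eq:3sp.Rmu}, evaluated at the prescribed points $i\varkappa_n$. Concretely, the idea is that $m_n^2$ is the residue of $R_\nu$-type quantity at the zero $\varkappa_n^2$, so the sequence $(m_n,\varkappa_n)$ prescribes the residues of the auxiliary Blaschke product at a prescribed zero set, and the $\mu_j$'s become the free parameters (the poles) one has to recover.

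At this stage I would use the interpolation Theorem~\ref{thm:interpolation} for Blaschke products, which says that the position of the poles $\mu_j^2$ is uniquely determined by the prescribed residue data at the zeros $\varkappa_j^2$. Applied to $q$ and $\tilde q$, this forces the two corresponding sequences $(\mu_j)$ and $(\tilde\mu_j)$ to coincide. The main obstacle will be precisely this interpolation step: one must check that the spectral data satisfies the hypotheses of Theorem~\ref{thm:interpolation} (in particular, the positivity/summability conditions forced by the Lieb--Thirring bound and by $m_n>0$), and that these hypotheses give uniqueness rather than merely existence.

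Once $\bm{\varkappa}$ and $\bm{\mu}$ are shown to coincide for $q$ and $\tilde q$, the three negative spectra of $T_q$, $T_q^+$, $T_q^-$ agree with those of $T_{\tilde q}$, $T_{\tilde q}^+$, $T_{\tilde q}^-$. Since both potentials are generic, Theorem~\ref{thm:uniq-3sp} immediately yields $q=\tilde q$, completing the argument. The corollary then follows by combining this with the existence statement of Theorem~\ref{thm:form.refl}: Theorem~\ref{thm:form.refl} shows the scattering map $\mathcal{R}_1\to\ell_{1,+}(\bN)\times\bR_+^{\bN}$ is onto, and Theorem~\ref{thm:uniq-sp} shows it is injective.
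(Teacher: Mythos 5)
Your proposal is correct and follows essentially the same route as the paper: the uniqueness half is reduced to Theorem~\ref{thm:uniq-3sp} by using the three-spectra formula~\eqref{eq:3-sp} to read off the values $B(i\varkappa_n)=-i\,\dot a(i\varkappa_n)^{-1}m_n^{-2}$ of the Blaschke product built from $(\mu_j)$, and the interpolation Theorem~\ref{thm:interpolation} then recovers $(\mu_j)$ uniquely, while surjectivity is exactly Theorem~\ref{thm:form.refl}. Your explicit reduction to the generic case by a common shift $\tau$ is the same device the paper uses implicitly via Subsection~\ref{ssec:q-generic}.
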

In view of Remark~\ref{rem:Marchenko}, we also conclude that 
\begin{corollary}\label{cor:Marchenko}
	$\mathcal{R}_1 \subset \widetilde{B}:= \cup_{\mu>0}\overline{B(-\mu^2)}$, i.e., every integrable reflectionless potential is a generalized reflectionless potential in the sense of Marchenko~\cite{Mar91}. 
\end{corollary}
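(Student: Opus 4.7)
The plan is to read Corollary~\ref{cor:Marchenko} as a direct bookkeeping consequence of the explicit construction in Theorem~\ref{thm:form.refl}, the scattering bijection of Corollary~\ref{cor:one-to-one}, and the approximation observation of Remark~\ref{rem:Marchenko}. Given an arbitrary $q\in\mathcal{R}_1$, the first step is to extract its spectral data $(\bm{\varkappa},\mathbf{m})$: the Lieb--Thirring bound recalled in Section~\ref{sec:prelim} guarantees $\bm{\varkappa}\in\ell_{1,+}(\bN)$, while the norming constants $m_j$ are positive by construction. If $q$ is itself classical then its spectrum is finite and $T_q\ge -\varkappa_1^2$, so $q\in B(-\varkappa_1^2)\subset\overline{B(-\varkappa_1^2)}\subset\widetilde B$ and there is nothing to prove; thus we may assume that $q$ is non-classical, and in particular lies in $\mathcal{R}_1$ in the sense used by Corollary~\ref{cor:one-to-one}.

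Second, I would feed the data $(\bm{\varkappa},\mathbf{m})$ into formulae~\eqref{eq:form.Q}--\eqref{eq:form.q} and denote the resulting potential by $\tilde q$. Theorem~\ref{thm:form.refl} asserts that $\tilde q\in\cQ_1$ is reflectionless and that the spectrum and norming constants of $T_{\tilde q}$ reproduce exactly $(\bm{\varkappa},\mathbf{m})$. Corollary~\ref{cor:one-to-one} (the uniqueness half of the scattering correspondence) now forces $q=\tilde q$, so $q$ is actually produced by the construction of Theorem~\ref{thm:form.refl} from its own spectral data.

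Third, Remark~\ref{rem:Marchenko} explicitly records that the proof of Theorem~\ref{thm:form.refl} supplies a sequence of classical reflectionless potentials $(q_n)_{n\in\bN}$ with $q_n\in B(-\varkappa_1^2)$ converging to $\tilde q$ uniformly on compact subsets of $\bR$. Therefore $\tilde q\in\overline{B(-\varkappa_1^2)}$, and consequently
\[
q \;=\; \tilde q \;\in\; \overline{B(-\varkappa_1^2)} \;\subset\; \widetilde B,
\]
which is the desired inclusion $\mathcal{R}_1\subset\widetilde B$.

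There is no substantive obstacle to surmount at this stage: the corollary is a short aggregation of material already on record. The only delicate point is verifying that the scattering mapping is surjective onto $\ell_{1,+}(\bN)\times\bR_+^{\bN}$, so that the identification $q=\tilde q$ is legitimate; but this is precisely what Corollary~\ref{cor:one-to-one} delivers, itself a combination of Theorem~\ref{thm:form.refl} (existence) and Theorem~\ref{thm:uniq-sp} (uniqueness). Once that correspondence is in hand, Corollary~\ref{cor:Marchenko} follows in a few lines as outlined above.
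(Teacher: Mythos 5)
Your proposal is correct and follows essentially the same route the paper intends: extract the spectral data $(\bm{\varkappa},\mathbf{m})$ of $q$, invoke Theorem~\ref{thm:form.refl} together with the uniqueness Theorem~\ref{thm:uniq-sp} (i.e., Corollary~\ref{cor:one-to-one}) to identify $q$ with the potential given by \eqref{eq:form.Q}--\eqref{eq:form.q}, and then apply Remark~\ref{rem:Marchenko} to place it in $\overline{B(-\varkappa_1^2)}\subset\widetilde B$. Your explicit handling of the classical case (where $q\in B(-\varkappa_1^2)$ directly) is a sensible extra precaution given the slight notational shift in how $\mathcal{R}_1$ is used in Corollary~\ref{cor:one-to-one}.
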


Certainly, only the case of infinitely many negative eigenvalues is of interest, as otherwise such a $q$ is a classical Bargmann potential.  
Our approach consists in justifying first the three spectra formula~\eqref{eq:3-sp} that relates the norming constants of $T_q$ and negative eigenvalues of $T_q$, $T_q^+$, and $T_q^-$. Using this formula, we next show that the spectral data uniquely determine these three negative spectra, and then apply Theorem~\ref{thm:uniq-3sp}.

\subsection{Three spectra formula}
As usual, for a potential $q\in\cQ_1$, we denote by 
\[
	-\varkappa_1^2 < -\varkappa_2^2 < \dots 
\]
the finite or infinite sequence of negative eigenvalues of the operator $T_q$ and by 
\[
	-\mu_1^2 < -\mu_2^2 < \dots 
\]
the corresponding eigenvalue sequence for the operator $T_q'$. Without loss of generality, we assume that $q$ is generic, so that the negative spectra of $T_q$ and $T_q'$ have void intersection. Then the two sequences strictly interlace, viz.
\[
	-\varkappa_1^2 < -\mu_1^2 < -\varkappa_2^2 < - \mu_2^2 < \dots
\]
and each $-\mu_n^2$ is an eigenvalue of either $T_q^+$ or $T_q^-$ but not both; in the former case we take $\mu_n>0$ and in the latter case $\mu_n<0$. 

Next, we have the following formula for the norming constants $m_n$ corresponding to the eigenvalues~$-\varkappa^2_n$ of $T_q$:
\begin{equation}\label{eq:uniq.norm}
	m_n^{-2} := \int_{\bR} |e_+(x,i\varkappa_n;q)|^2 dx = i \dot{a}(i\varkappa_n;q)\frac{e_+(0,i\varkappa_n;q)}{e_-(0,-i\varkappa_n;q)};
\end{equation}
here $a$ is the standard scattering coefficient. Formally speaking, \eqref{eq:uniq.norm} was derived in~\cite{Mar} for $q$ in the Marchenko class, but the arguments only used existence of Jost solutions and thus can be applied to $q\in\cQ_1$ as well.

If $q = q_N$ is a classical reflectionless potential with $N$ negative eigenvalues, then the corresponding half-line Jost functions are equal to~\cite{Mar91} 
\begin{equation}\label{eq:3sp-Jost}
	e_\pm(0,\lambda;q_N) = \prod_{n=1}^N \frac{\lambda - i\mu_n}{\lambda \pm i\varkappa_n},
\end{equation}
and~\eqref{eq:uniq.norm} can be recast as
\begin{equation}\label{eq:3sp-clas}
	m_n^{-2} = i \dot{a}(i\varkappa_n;q_N) \prod_{l=1}^N \frac{\varkappa_n - \mu_l}{\varkappa_n+ \mu_l}.
\end{equation}
Recall also that the scattering coefficient $a(\cdot;q_N)$ is then a Blaschke product
\begin{equation}\label{eq:Blaschke-aN}
	a(z;q_N) = \prod_{n=1}^N \frac{z - i\varkappa_n}{z + i\varkappa_n},
\end{equation}
and thus formula~\eqref{eq:3sp-clas} relates the spectra of $T_q$, $T_q^+$, $T_q^-$, and the norming constants for~$T_q$. 

Our aim in this subsection is to prove that formula~\eqref{eq:3sp-clas} is valid also for reflectionless potentials $q\in\cQ_1$ with infinitely many negative eigenvalues. To this end, we first observe that $a(\cdot;q)$ is then given by the infinite Blaschke product
\begin{equation}\label{eq:Blaschke-a}
	a(z;q) = \prod_{n=1}^\infty \frac{z - i\varkappa_n}{z + i\varkappa_n};
\end{equation}
this follows e.g.\ from the continuity result of~\cite{HM-traces}. Next, with the sequence $(\mu_n)_{n\in\bN}$ constructed as explained above, we introduce the function
\begin{equation}\label{eq:Blaschke-B}
	B(z) = \prod_{n=1}^\infty \frac{z - i\mu_n}{z+ i\mu_n}. 
\end{equation}
The above product converges uniformly on compact subsets of $\bC\setminus \{-\mu_n\}_{n\in\bN}$ and can be written as the ratio $B_+(z)/B_-(z)$ of two Blaschke products $B_+$ and $B_-$, with
\begin{equation}\label{eq:uniq-sp.Bpm}
	B_\pm (z) := \prod_{n : \pm\mu_n>0}  \frac{z - i|\mu_n|}{z + i|\mu_n|}.
\end{equation}

\begin{theorem}[Three spectra formula]\label{thm:three-spectra}
Assume that $q$ is a generic potential in $\mathcal{R}_1$ and that $T_q$ has spectral data~$(\bm{\varkappa}, \mathbf{m})\in \ell_{1,+}\times \bR_+^{\bN}$. Construct the sequence $(\mu_n)_{n\in\bN}$ related to the negative spectrum of the operator~$T_q'$ and form the product $B$ as in~\eqref{eq:Blaschke-B}. Then for each $n\in\bN$, the right norming constant $m_n$ for the eigenvalue $-\varkappa_n^2$ of~$T_q$ satisfies the relation
\begin{equation}\label{eq:3-sp}
	m_n^{-2} = i \dot{a}(i\varkappa_n)B(i\varkappa_n).
\end{equation}
\end{theorem}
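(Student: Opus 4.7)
The plan is to derive~\eqref{eq:3-sp} by approximating $q$ by classical reflectionless potentials $q_N$ in $L_1(\bR)$ and passing to the limit in the classical three-spectra identity~\eqref{eq:3sp-clas}. The starting point is formula~\eqref{eq:uniq.norm}, which holds for any $q \in \cQ_1$ and reads
\[
m_n^{-2} = i\dot{a}(i\varkappa_n;q)\,\frac{e_+(0,i\varkappa_n;q)}{e_-(0,-i\varkappa_n;q)}.
\]
Since $a$ has the explicit Blaschke form~\eqref{eq:Blaschke-a} determined by $\bm{\varkappa}$ alone, the task reduces to verifying
\[
\frac{e_+(0,i\varkappa_n;q)}{e_-(0,-i\varkappa_n;q)} = B(i\varkappa_n).
\]

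For a classical reflectionless $q_N$ with eigenvalues $-\varkappa_1^2,\dots,-\varkappa_N^2$ and associated interlacing sequence $\mu_1^{(N)},\dots,\mu_N^{(N)}$ coming from $T_{q_N}'$, the explicit factorisation~\eqref{eq:3sp-Jost} gives
\[
\frac{e_+(0,i\varkappa_n;q_N)}{e_-(0,-i\varkappa_n;q_N)} = \prod_{l=1}^{N}\frac{\varkappa_n-\mu_l^{(N)}}{\varkappa_n+\mu_l^{(N)}} =: B^{(N)}(i\varkappa_n).
\]
I would invoke the construction in the proof of Theorem~\ref{thm:form.refl} to produce a sequence of classical reflectionless $q_N$ with spectral data $(\varkappa_j,m_j)_{j \le N}$ that converges (along a subsequence) to $q$ in the norm of $L_1(\bR)$. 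Lemma~\ref{lem:pre.Jost-bounds} then yields $e_\pm(0,\pm i\varkappa_n;q_N) \to e_\pm(0,\pm i\varkappa_n;q)$, and the genericity of $q$ ensures that the denominator limit is nonzero, so the ratio on the left converges to $e_+(0,i\varkappa_n;q)/e_-(0,-i\varkappa_n;q)$.

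The task reduces to identifying $\lim_N B^{(N)}(i\varkappa_n)$ with $B(i\varkappa_n)$. For this I would proceed in two steps. First, continuity of the $m$-functions $m_\pm(\,\cdot\,;q_N) \to m_\pm(\,\cdot\,;q)$ on compact subsets of $\bC \setminus \bR_+$ — a direct consequence of Lemma~\ref{lem:pre.Jost-bounds} via~\eqref{eq:3sp-m} applied both to $e_\pm(0,\cdot)$ and $e_\pm'(0,\cdot)$ — combined with Hurwitz's theorem, gives $\mu_l^{(N)} \to \mu_l$ for every $l$. Second, the strict interlacing~\eqref{eq:3sp.interlace} yields the uniform bound $|\mu_l| < \varkappa_l$, so $\sum_l |\mu_l| < \infty$ and the infinite product $B(i\varkappa_n) = \prod_l \frac{\varkappa_n - \mu_l}{\varkappa_n + \mu_l}$ converges absolutely and admits an $N$-independent dominant. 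A dominated-convergence argument, essentially Lemma~\ref{lem:pre.Blaschke}, then delivers $B^{(N)}(i\varkappa_n) \to B(i\varkappa_n)$, which chained with the convergence on the left-hand side completes the proof.

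The main obstacle is the first step above: justifying the Hurwitz-type stability of the half-line Dirichlet eigenvalues $\mu_l^{(N)} \to \mu_l$. The continuity theorem of~\cite{HM-traces} addresses only whole-line operators, so the half-line situation must be handled separately through the meromorphic structure of $m_\pm$ on $\bC\setminus\bR_+$, whose poles are precisely the eigenvalues of $T_q^\pm$. One additional subtlety to control is that no extra pole can arise from accumulation or escape to infinity along the approximating sequence; this is prevented by the uniform bound $|\mu_l^{(N)}| < \varkappa_l \le \varkappa_1$ coming from interlacing and the fact that $\bm{\varkappa}$ is the common negative spectrum of the $T_{q_N}$ in the range $n\le N$. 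Once this continuity is established, the remainder of the passage to the limit is routine.
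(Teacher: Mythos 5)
Your overall skeleton (approximate by classical reflectionless potentials, use the classical identity \eqref{eq:3sp-clas}, pass to the limit via Lemma~\ref{lem:pre.Blaschke} together with a Rouch\'e/Hurwitz argument for the Dirichlet eigenvalues) matches the paper's, but there is a genuine gap at the very first step: you build the approximants $q_N$ from the scattering data $(\varkappa_j,m_j)_{j\le N}$ and assert that (a subsequence of) them converges in $L_1(\bR)$ to the \emph{given} $q$. The construction of Theorem~\ref{thm:form.refl} only yields convergence to \emph{some} reflectionless $q_0\in\cQ_1$ whose operator has the data $(\bm{\varkappa},\mathbf{m})$; to conclude $q_0=q$ you would need the uniqueness-from-scattering-data Theorem~\ref{thm:uniq-sp}, which in the paper is \emph{deduced from} the three-spectra formula you are trying to prove. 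Without that identification, the limits of your $\mu_l^{(N)}$ are the Dirichlet data $\mu_l(q_0)$ of the limit potential, and your argument establishes \eqref{eq:3-sp} with $B$ built from the $\mu$'s of $q_0$ rather than those of $q$ --- which is not the stated theorem. The same circularity blocks your reduction via \eqref{eq:uniq.norm}: proving $e_+(0,i\varkappa_n;q)/e_-(0,-i\varkappa_n;q)=B(i\varkappa_n)$ with $q$'s own $\mu$'s is precisely what cannot be reached from approximants that are only known to share the scattering data with $q$.

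The paper escapes this by choosing the approximants differently: $q_N$ is the classical reflectionless potential whose three spectra are the truncations $(\varkappa_j)_{j\le N}$ and $(\mu_j)_{j\le N}$ of the data of the given $q$; its norming constants are then read off from \eqref{eq:uniq.mnM}, shown to converge by Lemma~\ref{lem:pre.Blaschke} (genericity ensuring the limit is finite and nonzero), so that assumption $A(\bm{\varkappa})$ holds and Lemma~\ref{lem:pre.condA} yields an $L_1$-convergent subsequence. The limit $q_0$ then has the same three negative spectra as $q$ --- by eigenvalue continuity for the whole-line spectrum and by the Rouch\'e argument applied to $e_\pm(0,\cdot;q_N)$ for the half-line spectra --- and the \emph{three-spectra} uniqueness Theorem~\ref{thm:uniq-3sp}, proved independently in Section~\ref{sec:uniqueness}, gives $q_0=q$; Lemma~\ref{lem:pre.cont-norm} then identifies the limit of the norming constants with $m_n$. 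If you reorganize your proof so that the $\mu_j$'s of $q$, rather than the $m_j$'s, are the input to the approximating sequence, the remainder of your argument (including the dominated-convergence treatment of the Blaschke products, for which the interlacing bound $|\mu_l|\le\varkappa_l$ indeed supplies the summable dominant) goes through.
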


As in Section~\ref{sec:existence}, we start by constructing a sequence of classical reflectionless potentials converging to~$q$. Namely, for each $N\in\bN$, we denote by~$q_N$ the classical reflectionless potential associated with the sequences $(\varkappa_n)_{n=1}^N$ and $(\mu_n)_{n=1}^N$. We write $a_N$ for $a(\cdot;q_N)$ for short and also introduce the finite Blaschke products $B_{N,+}$ and $B_{N,-}$ as in~\eqref{eq:uniq-sp.Bpm} but using the first $N$ values, i.e., 
\begin{equation}\label{eq:uniq-sp.BpmN}
	B_{N,\pm} (z) := \prod_{n: \pm\mu_n>0}^N  \frac{z - i|\mu_n|}{z + i|\mu_n|},
	%\substack{n=1\\ \pm\mu_n>0}
\end{equation}
and set $B_N:=B_{N,+}/B_{N,-}$. Then equation~\eqref{eq:3sp-clas} for the right norming constant~$m_{n,N,+}$ of the operator $T_{q_N}$ corresponding to the eigenvalue $-\varkappa^2_n$ takes the form
\begin{equation}\label{eq:uniq.mnM}
		m_{n,N,+}^{-2} = i \dot{a}_N(i\varkappa_n)\frac{B_{N,+}(i\varkappa_n)}{B_{N,-}(i\varkappa_n)}.
\end{equation}

Application of Lemma~\ref{lem:pre.Blaschke} immediately gives the following result. 

\begin{lemma}
	As $N\to\infty$ and $n$ is fixed, the sequences $\dot{a}_N(i\varkappa_n)$ and $B_{N,\pm}(i\varkappa_n)$ converge respectively to $\dot{a}(i\varkappa_n)$ and $B_\pm(i\varkappa_n)$.
\end{lemma}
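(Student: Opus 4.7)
The claim is essentially a routine application of the Blaschke‐product convergence already invoked once in the proof of Lemma~\ref{lem:pre.condA}, so the plan is to reduce everything to Lemma~\ref{lem:pre.Blaschke} and to plain convergence of infinite products with summable parameters. My first step is to set up the summability input. By the interlacing relation
\[
    -\varkappa_1^2 < -\mu_1^2 < -\varkappa_2^2 < -\mu_2^2 < \dots
\]
the sequence $(|\mu_k|)_{k\in\bN}$ is dominated termwise by $(\varkappa_k)_{k\in\bN}$, and since $\bm{\varkappa}\in\ell_{1}(\bN)$ we get $(|\mu_k|)\in\ell_{1}(\bN)$ as well. This is what makes all the infinite products $a$, $B_\pm$ absolutely convergent off their pole sets.

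The easy half is $B_{N,\pm}(i\varkappa_n)\to B_\pm(i\varkappa_n)$. Writing each factor as
\[
    \frac{i\varkappa_n-i|\mu_k|}{i\varkappa_n+i|\mu_k|}
        = 1 - \frac{2|\mu_k|}{\varkappa_n+|\mu_k|},
\]
I note that $\sum_k \tfrac{2|\mu_k|}{\varkappa_n+|\mu_k|}<\infty$ (the point $i\varkappa_n$ is bounded away from the poles $-i|\mu_k|$ by genericity, and in particular from $0$), so the infinite product defining $B_\pm(i\varkappa_n)$ converges absolutely and its partial products $B_{N,\pm}(i\varkappa_n)$ converge to it. This is the elementary criterion for convergence of infinite products, applied pointwise.

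For the derivative, the idea is to factor out the zero of $a_N$ and $a$ at $i\varkappa_n$ (for $N\ge n$). Since $a_N$ has a simple zero at $i\varkappa_n$,
\[
    \dot a_N(i\varkappa_n)
        = \frac{1}{2i\varkappa_n}\,\prod_{\substack{k\le N\\ k\ne n}}\frac{\varkappa_n-\varkappa_k}{\varkappa_n+\varkappa_k},
\]
and similarly
\[
    \dot a(i\varkappa_n)
        = \frac{1}{2i\varkappa_n}\,\prod_{k\ne n}\frac{\varkappa_n-\varkappa_k}{\varkappa_n+\varkappa_k}.
\]
The convergence of the infinite product on the right is guaranteed by $\sum_k\varkappa_k<\infty$ (again using that $\varkappa_n$ is separated from the remaining $\varkappa_k$), and the convergence of the $N$th partial product to it is exactly the content of Lemma~\ref{lem:pre.Blaschke} that was already used in Section~\ref{sec:prelim}. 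Invoking that lemma directly gives $\dot a_N(i\varkappa_n)\to\dot a(i\varkappa_n)$.

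I do not foresee a real obstacle here: the only mild subtlety is making sure that the point $i\varkappa_n$ is uniformly separated from both sets of poles $\{-i\varkappa_k\}$ and $\{-i|\mu_k|\}$, which follows from the genericity assumption on $q$ (so $\varkappa_n\ne|\mu_k|$ for all $k$) together with $\varkappa_n>0$. Everything else is a direct quotation of Lemma~\ref{lem:pre.Blaschke} and the standard convergence criterion for infinite products.
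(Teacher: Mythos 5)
Your proof is correct and follows essentially the same route as the paper, which disposes of this lemma by a one-line appeal to Lemma~\ref{lem:pre.Blaschke}; your explicit factorization of $\dot a_N(i\varkappa_n)$ through the simple zero at $i\varkappa_n$, together with the summability of $(|\mu_k|)$ coming from interlacing, just spells out what that appeal leaves implicit. One cosmetic remark: the separation of $i\varkappa_n$ from the poles $-i\varkappa_k$ and $-i|\mu_k|$ needs no genericity at all (they lie in the closed lower half-plane while $\varkappa_n>0$); genericity is only needed in the next step of the paper to guarantee $B_-(i\varkappa_n)\ne 0$.
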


Since $q$ is generic, the sets $\{\varkappa_n\}_{n\ge1}$ and $\{|\mu_n|\}_{n\ge1}$ do not intersect; therefore, $B_-(i\varkappa_n)\ne0$ and the right-hand side of~\eqref{eq:uniq.mnM} has a finite non-zero limit as $N\to\infty$, so that 
\begin{equation}\label{eq:uniq.mnN-limit}
	\lim_{N\to\infty} m_{n,N,+}^{-2} 
		= \lim_{N\to\infty}i \dot{a}_N(i\varkappa_n)\frac{B_{N,+}(i\varkappa_n)}{B_{N,-}(i\varkappa_n)} 
		= i \dot{a}(i\varkappa_n)\frac{B_{+}(i\varkappa_n)}{B_{-}(i\varkappa_n)} \ne0.
\end{equation}
Therefore, the sequence $q_N$ satisfies assumption~$A(\bm{\varkappa})$ of Definition~\ref{def:pre.condA} and thus by Lemma~\ref{lem:pre.condA} there is a subsequence $q_{N_k}$ converging in $L_1(\bR)$ to a reflectionless potential~$q_0$. 

\begin{lemma}
  The above limit $q_0$ coincides with $q$.
\end{lemma}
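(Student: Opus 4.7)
The strategy is to invoke the three-spectra uniqueness Theorem~\ref{thm:uniq-3sp}: we show that $q_0$ is a generic reflectionless potential in $\cQ_1$ whose operators $T_{q_0}$, $T_{q_0}^+$, $T_{q_0}^-$ have the same negative spectra as $T_q$, $T_q^+$, $T_q^-$ respectively, which will force $q_0=q$.

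First, since the set $\mathcal{R}_1$ of integrable reflectionless potentials is closed in $L_1(\bR)$ (Subsection~\ref{ssec:classical-properties}) and $q_{N_k}\to q_0$ in $L_1$, we have $q_0\in\mathcal{R}_1$. The continuity of the eigenvalue map $q\mapsto\bm{\varkappa}(q)$ from $\cQ_1$ to $\ell_1(\bN)$ cited in Section~\ref{sec:prelim}, together with $\bm{\varkappa}(q_{N_k})=(\varkappa_1,\dots,\varkappa_{N_k},0,0,\dots)$, yields $\bm{\varkappa}(q_0)=(\varkappa_n)_{n\in\bN}=\bm{\varkappa}(q)$, so $T_{q_0}$ and $T_q$ share their negative spectrum.

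For the Dirichlet spectra we use a Hurwitz-type argument. By Lemma~\ref{lem:pre.Jost-bounds}, $e_+(0,\lambda;q_{N_k})$ converges locally uniformly in $\bC_+$ to $e_+(0,\lambda;q_0)$; the same lemma at large $|\lambda|$ shows $e_+(0,\lambda;q_0)\to 1$, so the limit is not identically zero. By formula~\eqref{eq:3sp-Jost}, the zero set of $e_+(0,\cdot;q_{N_k})$ inside $\bC_+$ is $\{i\mu_n:1\le n\le N_k,\ \mu_n>0\}$, with each zero simple. Since the interlacing $\varkappa_{n+1}<|\mu_n|<\varkappa_n$ combined with $\varkappa_n\to 0$ forces $|\mu_n|\to 0$, these zeros accumulate only at $0\in\partial\bC_+$, hence lie discretely inside $\bC_+$. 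Hurwitz's theorem applied on small disks about every candidate point then identifies the zero set of $e_+(0,\cdot;q_0)$ in $\bC_+$ with precisely $\{i\mu_n:\mu_n>0\}$. Since $q_0$ is real-valued, these zeros all lie on the positive imaginary axis, and the right Jost solution at such a zero yields a square-integrable eigenfunction of $T_{q_0}^+$ vanishing at $0$, so $\sigma_{\mathrm{disc}}(T_{q_0}^+)=\{-\mu_n^2:\mu_n>0\}=\sigma_{\mathrm{disc}}(T_q^+)$. The analogous argument with $e_-(0,\cdot;q_{N_k})$ in $\bC_-$ handles the other half-line.

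Finally, disjointness of $\{-\varkappa_n^2\}$ and $\{-\mu_n^2\}$, guaranteed by genericity of $q$, now also holds for $q_0$, so $q_0$ is generic. Theorem~\ref{thm:uniq-3sp} then gives $q_0=q$. The main technical point is the Hurwitz step, where one must rule out loss of zeros to the accumulation point $0$ and spurious zeros arising elsewhere in $\bC_+$; the two ingredients making this routine are the confinement of accumulation to $0\in\partial\bC_+$ via $\varkappa_{n+1}<|\mu_n|<\varkappa_n\to 0$, and the non-vanishing of $e_+(0,\cdot;q_0)$ at infinity supplied by Lemma~\ref{lem:pre.Jost-bounds}.
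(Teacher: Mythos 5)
Your argument is correct and follows essentially the same route as the paper: eigenvalue continuity identifies the negative spectrum of $T_{q_0}$, locally uniform convergence of $e_\pm(0,\cdot\,;q_{N_k})$ together with a Rouch\'e/Hurwitz argument identifies the Dirichlet spectra via the zeros $i\mu_n$, and Theorem~\ref{thm:uniq-3sp} then forces $q_0=q$. The only difference is that you spell out the Hurwitz step (non-triviality of the limit, accumulation of zeros only at $0\in\partial\bC_+$, genericity of $q_0$) in more detail than the paper, which simply cites the Rouch\'e theorem.
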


\begin{proof}
By eigenvalue continuity, the negative eigenvalues of the operator $T_{q_0}$ coincide with the set $\{-\varkappa_n^2\}_{n\in\bN}$. 

We next show that the sequence $(\mu_n(q_0))_{n\ge1}$ constructed for the operator $T_{q_0}$ coincides with the sequence $(\mu_n)_{n\ge1}$ for $T_q$.
Recall that those $\mu_n$ that are positive (resp., negative) correspond to the zeros $i\mu_n$ of the right Jost function $e_+(0,\lambda;q)$ (resp. of the left Jost function $e_-(0,\lambda;q)$). As we already know, $e_\pm(0,\lambda;q_N)$ converge locally uniformly in $\lambda\in\bC_\pm$ to $e_\pm(0,\lambda;q_0)$; therefore, by the Rouch\'e theorem the zeros of $e_\pm(0,\lambda;q_N)$ in $\bC_\pm$ converge to those of $e_\pm(0,\lambda;q_0)$. The construction of the potentials $q_N$ now guarantees that $\mu_n(q_0) = \mu_n$ for every $n\in\bN$.

To sum up, the operators $T_{q_0}$, $T_{q_0}^+$ and $T_{q_0}^-$ have the same negative eigenvalues as the operators $T_q$, $T_q^+$ and $T_q^-$ respectively. By Theorem~\ref{thm:uniq-3sp}, $q_0 = q$, and the proof is complete.
\end{proof}

\begin{proofof}{Proof of Theorem~\ref{thm:three-spectra}.}
As the subsequence~$q_{N_k}$ constructed above converges to $q$ in the topology of $L_1(\bR)$, by Lemma~\ref{lem:pre.cont-norm} the norming constants corresponsign to the eigenvalue~$-\varkappa_n^2$ of the Schr\"odinger operator with potential $q_{N_k}$ converge to those of the operator~$T_q$. Therefore, 
\(
	\lim_{k\to\infty} m_{n,N_k,+} = m_n
\) 
for every fixed $n\in\bN$, which in view of~\eqref{eq:uniq.mnN-limit} completes the proof.
\end{proofof}

%%%%%%%%%%%%%%%%%%%%%%%%%%%%%%%%%%%%%%%%%%%%%%%%%%%%%%%%%%%%%%%%%%%
\subsection{Interpolation of ratios of Blaschke products} 

Before continuing with the proof of Theorem~\ref{thm:uniq-sp}, we establish one auxiliary interpolation result needed also for the existence of  Theorem~\ref{thm:3sp-exist}.

Recall that $\ell_{1,+}(\bN)$ stands for the set of all real sequences $\bm{\varkappa} =(\varkappa_j)_{j\in\bN}$ in $\ell_1(\bN)$ such that $\varkappa_j > \varkappa_{j+1} >0$ for all $j\in\bN$. For a fixed $\bm{\varkappa}\in \ell_{1,+}(\bN)$, we denote by $\Lambda(\bm{\varkappa})$ the set of real sequences $\bla=(\lambda_j)$ such that 
\[
\varkappa_j > |\lambda_j| > \varkappa_{j+1}, \qquad j\in\bN.
\]
Next, given a sequence $\bla \in \Lambda(\bm{\varkappa})$, we set 
\[
g_{\bla}(z) := \prod_{j=1}^\infty \frac{z - \lambda_j}{z+\lambda_j}.
\]
The function~$g_{\bla}$ is holomorphic on the set $\bC \setminus \bigl( \{0\}\cup\{-\lambda_j\}_{j\in\bN} \bigr)$ and has simple poles at the points of~$-\bla$.

\begin{theorem}\label{thm:interpolation}
	Assume that $\bm{\varkappa} \in \ell_{1,+}(\bN)$ is fixed. Then $\bla \in \Lambda(\bm{\varkappa})$ is uniquely determined by the values of the function $g_{\bla}$ at the points $\varkappa_j$, $j\in\bN$.
\end{theorem}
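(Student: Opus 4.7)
The plan is a two-stage argument: reduce to a polynomial identity in the finite case, then lift to the infinite case by analysing the meromorphic quotient $H=g_{\bla}/g_{\wt\bla}$.

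\emph{Finite case.} First suppose $\bla=(\lambda_1,\dots,\lambda_N)$ and $\wt\bla=(\wt\lambda_1,\dots,\wt\lambda_N)$ are finite sequences with $|\lambda_j|,|\wt\lambda_j|\in(\varkappa_{j+1},\varkappa_j)$ (taking $\varkappa_{N+1}:=0$) and $g_{\bla}(\varkappa_k)=g_{\wt\bla}(\varkappa_k)$ for $k=1,\dots,N$. Set $P(z):=\prod_{j=1}^N(z-\lambda_j)$ and observe that $\prod_{j=1}^N(z+\lambda_j)=(-1)^N P(-z)$, so $g_{\bla}(z)=(-1)^N P(z)/P(-z)$ and analogously $g_{\wt\bla}(z) = (-1)^N \wt P(z)/\wt P(-z)$. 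The interpolation conditions then amount to the polynomial
\[
	R(z):=P(z)\wt P(-z)-\wt P(z)P(-z)
\]
vanishing at each $\varkappa_k$. The leading terms of $P(z)\wt P(-z)$ and $\wt P(z)P(-z)$ cancel, so $\deg R\le 2N-1$, and a direct check yields $R(-z)=-R(z)$. Hence $R$ also vanishes at $-\varkappa_k$ and at $z=0$, giving $2N+1$ distinct roots; this forces $R\equiv 0$, so $P(z)/\wt P(z)$ is an even rational function. The interlacing constraint $|\lambda_j|,|\wt\lambda_j|\in(\varkappa_{j+1},\varkappa_j)$ (so that the absolute values of the zeros of $P$ and $\wt P$ lie in disjoint intervals) rules out any nontrivial even factorisation, forcing $P\equiv\wt P$ and hence $\bla=\wt\bla$.

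\emph{Infinite case.} For $\bla,\wt\bla\in\Lambda(\bm\varkappa)$ with $g_{\bla}(\varkappa_k)=g_{\wt\bla}(\varkappa_k)$ for all $k\in\bN$, consider
\[
	H(z):=\frac{g_{\bla}(z)}{g_{\wt\bla}(z)}=\prod_{j=1}^\infty\frac{(z-\lambda_j)(z+\wt\lambda_j)}{(z+\lambda_j)(z-\wt\lambda_j)}.
\]
This function is meromorphic on $\bC\setminus\{0\}$, satisfies $H(-z)=1/H(z)$ (from $g_{\bla}(-z)=1/g_{\bla}(z)$), $H(\pm\varkappa_k)=1$ for every $k$, and $H(z)\to 1$ as $|z|\to\infty$; moreover each individual factor equals $1$ at $z=0$. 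The goal is to conclude $H\equiv 1$, which immediately forces $\bla=\wt\bla$. The strategy is to approximate $H$ by its finite truncations $H_N(z):=\prod_{j=1}^N(z-\lambda_j)(z+\wt\lambda_j)/[(z+\lambda_j)(z-\wt\lambda_j)]$ — rational functions to which the finite-case polynomial identity applies (with controllable truncation errors in the interpolation values) — and pass to the limit as $N\to\infty$ using compactness of $\Lambda(\bm\varkappa)$ in the product topology together with the locally uniform convergence $H_N\to H$ off the singular set $\{-\lambda_j,\wt\lambda_j\}_{j\in\bN}$.

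\emph{Main obstacle.} The principal difficulty lies in controlling $H$ near $z=0$: while $g_{\bla}$ and $g_{\wt\bla}$ individually have an essential singularity there, the ratio $H$ is expected to extend continuously across $0$ with value $1$, so that the accumulation of zeros of $H-1$ at $\pm\varkappa_k\to 0$ lies in the interior of the meromorphic domain of $H$ and is sufficient to force $H\equiv 1$. Justifying this extension rigorously requires careful tail estimates on the infinite product $\prod_j[(z-\lambda_j)(z+\wt\lambda_j)/((z+\lambda_j)(z-\wt\lambda_j))]$, which should follow from the summability $\bm\varkappa\in\ell_1(\bN)$ together with the bound $|\lambda_j-\wt\lambda_j|\le\varkappa_j$; making this quantitative is the delicate step.
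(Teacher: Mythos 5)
Your finite-dimensional step is fine (it is essentially the same algebra the paper uses), but the entire content of the theorem is the infinite case, and there your argument has a genuine gap that is not merely a matter of ``making tail estimates quantitative''. The point $z=0$ is a \emph{non-isolated} singularity of $H=g_{\bla}/g_{\wt\bla}$: the poles $-\lambda_j$ and $\wt\lambda_j$ accumulate at $0$ together with the zeros $\pm\varkappa_k$ of $H-1$, so $0$ is not an interior point of any domain on which $H$ is analytic, and the identity theorem cannot be applied at that accumulation point --- even if $H$ were shown to extend continuously to $0$ with value $1$. Moreover, the continuous extension itself is out of reach by tail estimates: near $z=0$ the $j$-th factor of $H$ is $1-2z\bigl(\lambda_j^{-1}-\wt\lambda_j^{-1}\bigr)+O(|z|^2)$, and $\sum_j|\lambda_j^{-1}-\wt\lambda_j^{-1}|$ is in general divergent (the bound $|\lambda_j-\wt\lambda_j|\le\varkappa_j$ is useless against the denominator $\lambda_j\wt\lambda_j\sim\varkappa_{j+1}^2$), so $|z|\sum_{|\lambda_j|>|z|}|\lambda_j^{-1}-\wt\lambda_j^{-1}|$ need not tend to $0$. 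Your fallback --- applying the finite-case rigidity to truncations $H_N$ --- also does not close the gap: the truncations satisfy the interpolation conditions only approximately, and the finite-case statement you proved is a rigidity result with no stability; to pass to the limit you would need a quantitative version (small interpolation errors imply $\bla$ close to $\wt\bla$, uniformly in $N$), which is a substantially stronger and unproved assertion. Compactness of $\Lambda(\bm{\varkappa})$ gives you nothing here since the truncated data are fixed, not a sequence from which to extract limits.

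The missing idea, which is how the paper proceeds, is to move the accumulation point to infinity and replace the identity theorem by a growth argument. Substituting $w=1/z$, one writes $g_{\bla}(z)=h_{\bla}(1/z)/h_{\bla}(-1/z)$ with $h_{\bla}(w)=\prod_j(1-w\lambda_j)$, an \emph{entire} function of exponential type zero because $\bla\in\ell_1(\bN)$ (canonical product of genus zero). The interpolation hypothesis makes the odd entire function $f(w)=h_{\bla}(w)h_{\wt\bla}(-w)-h_{\wt\bla}(w)h_{\bla}(-w)$ vanish at $w=\pm1/\varkappa_j$ and at $0$; dividing by $h_{\bm{\varkappa}}(w)h_{\bm{\varkappa}}(-w)$ yields an entire function $G$ of exponential type zero with $G(0)=0$ which is bounded by $2$ on the imaginary axis, since each factor $\bigl|(1\pm iy\lambda_j)(1\mp iy\wt\lambda_j)\bigr|/\bigl|(1-iy\varkappa_j)(1+iy\varkappa_j)\bigr|\le1$. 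A Phragm\'en--Lindel\"of-type theorem (Levin) then forces $G$ to be constant, hence zero, hence $f\equiv0$ and $\bla=\wt\bla$. Without some such growth input your plan cannot be completed.
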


We reformulate this theorem in a more convenient language of entire functions and make repeated use of the fact that a canonical product of genus zero is of exponential type zero; see Theorem~7 in Sec.~4, Part~1 of Ch.~2 in~\cite{Young}. We recall that $P$ being of exponential type zero means that $\log^+ |P(z)| = o(|z|)$ as $|z|\to\infty$; in particular, the sum and the product of two functions of exponential type zero are of exponential type zero as well. The same is true of the ratio of two such functions whenever this ratio is an entire function.

\begin{proof}[Proof of Theorem~\ref{thm:interpolation}] For a sequence $\bla\in\Lambda(\bm{\varkappa})$, we introduce the canonical product
	\[
	h_{\bla}(w):= \prod_{j=1}^\infty \bigl(1-w\lambda_j\bigr);
	\]
	then by \cite[Thm~7, Ch.~II, Pt.~I, Sec.4]{Young} $h_{\bla}$ is an entire function of exponential type $0$  and
	\[
	g_{\bla}(z) = h_{\bla}(1/z)/h_{\bla}(-1/z). 
	\]
	
	Next assume that sequences $\bla = (\lambda_j)_{j\in\bN}$ and $\widetilde\bla = (\tilde\lambda_j)_{j\in\bN}$ in $\Lambda(\bm{\varkappa})$ are such that $g_{\bla}(\varkappa_j) = g_{\widetilde\bla}(\varkappa_j)$ for all $j\in\bN$; then one gets the equality
	\[
	h_{\bla}(w)h_{\tilde\bla}(-w) = h_{\tilde\bla}(w)h_{\bla}(-w)
	\]
	for $w=1/\varkappa_j$, $j\in\bN$. Introduce the entire function
	\[
	f(w):= h_{\bla}(w)h_{\tilde\bla}(-w) - h_{\tilde\bla}(w)h_{\bla}(-w);
	\]
	then $f$ is an odd function that vanishes at the points $w = \pm 1/\varkappa_j$ and $w=0$. We shall prove that $f \equiv 0$; then $\{\lambda_j\}_{j\in\bN} \cup \{-\tilde\lambda_j\}_{j\in\bN} = \{-\lambda_j\}_{j\in\bN} \cup \{\tilde\lambda_j\}_{j\in\bN}$ and thus $\bla = \widetilde\bla$.
	
	The function
	\[
	G(w):=\frac{f(w)}{h_{\bm{\varkappa}}(w)h_{\bm{\varkappa}}(-w)}
	\]
	is entire, of exponential type zero, and vanishes at $w=0$. On the imaginary axis $w=iy$, one gets the bound
	\[
	\Bigl| \frac{(1\pm iy\lambda_j)(1 \mp iy\tilde\lambda_j)}
	{(1-iy\varkappa_j)(1+iy\varkappa_j)}\Bigr| \le 1,
	\]
	so that $|G(iy)|\le 2$. By corollary to Theorem~22 in~\cite{Levin}, the function~$G$ is constant, and as $G(0)=0$, this constant is zero. The proof is complete.
\end{proof}	

%%%%%%%%%%%%%%%%%%%%%%%%%%%%%%%%%%%%%%%%%%%%%%%%%%
	
\subsection{Proof of Theorem~\ref{thm:uniq-sp}} 

Given any element $(\bm{\varkappa},\mathbf{m})\in \ell_{1,+} \times \bR_+^\infty$, we showed in Section~\ref{sec:existence} that there exists a reflectionless potential $q\in \cQ_1$ whose spectral data coincide with that element; this $q$ is actually given explicitly by formulae~\eqref{eq:form.Q} and \eqref{eq:form.q}.

We now prove that such a $q$ is unique. For any reflectionless~$q\in\cQ_1$ with the given spectral data $(\bm{\varkappa},\mathbf{m})$, we construct the sequence $\mu_n$ generated by the negative eigenvalues of the operator $T_q'$. As the scattering coefficient $a(\cdot;q)$ is given by the Blaschke product~\eqref{eq:Blaschke-a} with $i\varkappa_n$, the three spectra formula~\eqref{eq:3-sp} shows that the product $B$ of~\eqref{eq:Blaschke-B} assumes known values at the points $i\varkappa_n$:
\[
	B(i\varkappa_n) = -i/\dot{a}(i\varkappa_n) m_n^{-2}. 
\]
The interpolation theorem (Theorem~\ref{thm:interpolation}) then implies that these values uniquely determine the sequence $(\mu_n)_{n\in\bN}$. By the three spectra uniqueness theorem (Theorem~\ref{thm:uniq-3sp}), the potential~$q$ is uniquely determined by the sequences $(\varkappa_n)_{n\in\bN}$ and $(\mu_n)_{n\in\bN}$, and the proof is complete.

%%%%%%%%%%%%%%%%%%%%%%%%%%%%%%%%%%%

\subsection{Characterization of three spectra}

As another application of the interpolation Theorem~\ref{thm:interpolation}, we can augment three spectra uniqueness Theorem~\ref{thm:uniq-3sp} with the existence result, thus giving a complete characterization of possible three spectra of integrable reflectionless potentials. 

\begin{theorem}\label{thm:3sp-exist}
	Assume that two real sequences $(\varkappa_n)_{n\in\bN}$ and $(\mu_n)_{n\in\bN}$ belong to $\ell_1(\bN)$ and satisfy the relations
	\begin{equation}\label{eq:3sp-interlace}
		\varkappa_1 > |\mu_1| > \varkappa_2 > |\mu_2| > \dots.
	\end{equation}
	Then there is a unique reflectionless potential $q$ in $\cQ_1$ such that the sets $\{-\varkappa_n^2\}_{n\in\bN}$ and 
	$\{-\mu_n^2 \mid \pm\mu_n>0\}$ coincide with negative spectra of operators~$T_q$ and $T_q^\pm$ respectively.
\end{theorem}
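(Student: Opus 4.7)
Uniqueness is immediate from Theorem~\ref{thm:uniq-3sp}, so the entire task is to construct a reflectionless $q\in\cQ_1$ realizing the prescribed three spectra. My plan is to approximate by classical Bargmann potentials that realize the finite truncations of the data and then invoke Lemma~\ref{lem:pre.condA} to pass to the limit in $L_1(\bR)$.

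For each $N\in\bN$, let $q_N$ denote the classical reflectionless potential associated with the negative eigenvalues $-\varkappa_1^2<\dots<-\varkappa_N^2$ and whose half-line restrictions $T_{q_N}^\pm$ have eigenvalues $\{-\mu_n^2 : \pm\mu_n>0,\ 1\le n\le N\}$; the interlacing condition~\eqref{eq:3sp-interlace} ensures this Bargmann potential exists and is generic. Its right norming constants are given by the finite three spectra formula~\eqref{eq:3sp-clas}:
\[
m_{n,N}^{-2} = i\dot a_N(i\varkappa_n)\,B_N(i\varkappa_n),
\]
with $a_N$ and $B_N$ the finite Blaschke products built from the first $N$ of the $\varkappa_j$ and $\mu_j$ respectively (cf.~\eqref{eq:Blaschke-aN}, \eqref{eq:uniq-sp.BpmN}). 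By Lemma~\ref{lem:pre.Blaschke}, $\dot a_N(i\varkappa_n)\to\dot a(i\varkappa_n)$ and $B_N(i\varkappa_n)\to B(i\varkappa_n)$ as $N\to\infty$. A direct sign count based on the interlacing~\eqref{eq:3sp-interlace} shows that both $i\dot a(i\varkappa_n)$ and $B(i\varkappa_n)$ carry the sign $(-1)^{n-1}$ and are nonzero, so $m_{n,N}\to m_n$ for some $m_n>0$. Hence $(q_N)_{N\in\bN}$ satisfies assumption $A(\bm{\varkappa})$ of Definition~\ref{def:pre.condA}, and Lemma~\ref{lem:pre.condA} yields a subsequence $(q_{N_k})$ converging in $L_1(\bR)$ (and uniformly on compact subsets of~$\bR$) to a reflectionless potential $q\in\cQ_1$. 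Continuity of negative eigenvalues identifies the negative spectrum of $T_q$ with $\{-\varkappa_n^2\}_{n\in\bN}$.

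The remaining step---and the only genuinely nontrivial point---is to identify the negative spectra of $T_q^\pm$ with the prescribed halves of $\{-\mu_n^2\}$. For this I use the fact that, for the classical potentials~$q_{N_k}$, formula~\eqref{eq:3sp-Jost} expresses the half-line Jost values as finite Blaschke-type ratios
\[
e_\pm(0,\lambda;q_{N_k})
= \prod_{n=1}^{N_k}\frac{\lambda-i\mu_n}{\lambda\pm i\varkappa_n}.
\]
Since $q_{N_k}\to q$ in $L_1(\bR)$, Lemma~\ref{lem:pre.Jost-bounds} gives locally uniform convergence $e_\pm(0,\lambda;q_{N_k})\to e_\pm(0,\lambda;q)$ on compact subsets of $\overline{\bC_\pm}\setminus\{0\}$; on the other hand, the displayed finite products converge to the infinite product $\prod_{n\ge 1}(\lambda-i\mu_n)/(\lambda\pm i\varkappa_n)$. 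Hence the zeros of $e_\pm(0,\cdot;q)$ in $\bC_\pm$ are exactly the points $i\mu_n$ with $\pm\mu_n>0$, which (together with the Weyl limit-point character of $T_q^\pm$) means that $\{-\mu_n^2 : \pm\mu_n>0\}$ is precisely the negative spectrum of $T_q^\pm$. Combining this with the identification of the negative spectrum of $T_q$ above yields the desired three spectra, and uniqueness through Theorem~\ref{thm:uniq-3sp} completes the proof. The main obstacle is the last paragraph: one must justify both the convergence of the infinite Blaschke-type product expressing $e_\pm(0,\lambda;q)$ and the use of Rouch\'e-type reasoning to rule out extra zeros or accumulations; however, both are controlled by the summability of $(\varkappa_n)$ and $(|\mu_n|)$ and by the uniform estimate in Lemma~\ref{lem:pre.Jost-bounds}.
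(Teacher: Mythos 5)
Your proposal is correct, but it takes a genuinely different route from the paper for the existence half. The paper's proof is top-down: it uses the three spectra formula~\eqref{eq:3-sp} to \emph{define} a sequence of norming constants $m_n$ (positive by the same sign count you perform), invokes Theorem~\ref{thm:uniq-sp} (hence Theorem~\ref{thm:form.refl}) to produce the unique reflectionless $q$ with spectral data $(\bm{\varkappa},\mathbf{m})$, and then applies~\eqref{eq:3-sp} to that $q$ together with the interpolation Theorem~\ref{thm:interpolation} to conclude $\mu_n(q)=\mu_n$; uniqueness is also routed through Theorems~\ref{thm:interpolation} and~\ref{thm:uniq-sp}. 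You instead work bottom-up with the classical approximants $q_N$ realizing the truncated three spectra, verify assumption $A(\bm{\varkappa})$ via the finite formula~\eqref{eq:3sp-clas} and Lemma~\ref{lem:pre.Blaschke}, pass to the $L_1$ limit by Lemma~\ref{lem:pre.condA}, and identify the half-line spectra of the limit through locally uniform convergence of $e_\pm(0,\cdot\,;q_{N_k})$ to the infinite product. This last step is exactly the Rouch\'e/Hurwitz argument the paper itself uses inside the proof of Theorem~\ref{thm:three-spectra} (and the resulting product formula for $e_\pm(0,\lambda;q)$ is derived in the paper's ``further remarks''), so it is fully supported by the available lemmas; the accumulation of the zeros $i\mu_n$ at the origin is harmless since $0\notin\bC_\pm$. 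What your route buys is independence from the interpolation Theorem~\ref{thm:interpolation} and from Theorem~\ref{thm:uniq-sp} for existence, and a cleaner uniqueness step (strict interlacing forces genericity, so Theorem~\ref{thm:uniq-3sp} applies directly); what it costs is an explicit reliance on the finite-dimensional three-spectra inverse result to produce each $q_N$ — though the paper leans on the same classical fact without proof in Section~\ref{sec:uniq-scattering}. Both arguments are sound.
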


\begin{proof} We first construct functions $a$ of~\eqref{eq:Blaschke-a} and $B$ of~\eqref{eq:Blaschke-B} for the given sequences $(\varkappa_n)_{n\in\bN}$ and $(\mu_n)_{n\in\bN}$ and then use the three spectra formula~\eqref{eq:3-sp} to determine a sequence $\mathbf{m}=(m_n)_{n\in\bN}$. The interlacing property~\eqref{eq:3sp-interlace} guarantees that all the numbers $m_n$ are positive. By Theorem~\ref{thm:uniq-sp}, there is a unique reflectionless potential~$q\in\cQ_1$ with the spectral data $(\bm{\varkappa},\mathbf{m})\in \ell_{1,+} \times \bR_+^\infty$. 
	
Now, we use the sequence $(\mu_n(q))$ generated by the operators~$T_q^+$ and $T_q^-$ to construct the Blaschke product~\eqref{eq:Blaschke-B}, 
\[
	B(z;q) = \prod_{n=1}^\infty \frac{z - i\mu_n(q)}{z+ i\mu_n(q)}. 
\]
The three spectra formula~\eqref{eq:3-sp} for the operator~$T_q$ shows that, for every $\varkappa_n$, we have $B(i\varkappa_n) = B(i\varkappa_n;q)$, and then the interpolation Theorem~\ref{thm:interpolation} yields the equalities $\mu_n(q) = \mu_n$ for every $n\in\bN$.  Therefore, the potential $q \in \cQ_1$ is such that the corresponding Schr\"odinger operators possess the required negative spectra.

Uniqueness of such a $q$ follows again from the interpolation Theorem~\ref{thm:interpolation} and the uniqueness Theorem~\ref{thm:uniq-sp}. The proof is complete. 
\end{proof}

\subsection{Some further remarks}

Here we give a few relations that are well known for the classical reflectionless potentials and justify their extension to the more general class of potentials in~$\mathcal{R}_1$. 

As a first example, the Jost functions $e_\pm(0,\lambda;q)$ for a classical reflectionless potential~$q$ is given by~\eqref{eq:3sp-Jost}. Assume now that $q \in \cQ_1$
is a (generic) generalized reflectionless potential with negative spectrum $-\varkappa_1^2 < -\varkappa_2^2 < \dots$, and let $-\mu_1^2 < - \mu_2^2 < \dots$ be negative eigenvalues of the operator~$T_q'$, with the standard convention on the signs of $\mu_k$. In the proof of Theorem~\ref{thm:uniq-sp}, we constructed a sequence $q_n$ of classical reflectionless potentials such that $T_{q_n}$ has negative eigenvalues $\{-\varkappa_1^2, - \varkappa_2^2, \dots ,- \varkappa_n^2\}$ and $T'_{q_n}$ has negative eigenvalues $\{-\mu_1^2, -\mu_2^2, \dots, -\mu_n^2\}$ and such that a subsequence $(q_{n_k})_{k\in\bN}$ of $(q_n)_{n\in\bN}$ converges to~$q$ in the topology of the space~$L_1(\bR)$. Passing to the limit in~\eqref{eq:3sp-Jost} over that subsequence and recalling continuity of the Jost solutions (Lemma~\ref{lem:pre.Jost-bounds}), we get
\[
	e_\pm(0,\lambda;q) = \prod_{n=1}^\infty \frac{\lambda - i\mu_n}{\lambda \pm i\varkappa_n}.
\]
Observe that if $q$ is not generic, then $\pm\mu_n = \varkappa_n$ for some $n\in\bN$, and some extra information is needed to compensate those missing terms in the above product, cf.~\cite{Mar91}.

In the same manner, for every~$\tau\in\bR$, for which the shifted potential~$q_\tau$ is generic, we get
\[
	e_\pm(\tau,\lambda;q) = \prod_{n=1}^\infty \frac{\lambda - i\mu_n(\tau)}{\lambda \pm i\varkappa_n},
\]
where $\mu_n(\tau)$ are constructed as before but for the splitting of the operator $T_q$ by the Dirichlet boundary condition at the point $x=\tau$. 

As a third example, we take the logarithmic derivative in $\tau$ at $\tau=0$ of the above expression for~$e_+(\tau, \lambda; q)$ to get the formula for the Weyl--Titchmarsh $m$-function in the upper half-plane (cf.~\eqref{eq:3sp-m}),
\[
	m_+(\lambda) = \frac{d}{d\tau}\log e_+(\tau,\lambda;q)\Bigl|_{\tau=0} = \sum_{n=1}^\infty \frac{-i\mu_n'(0)}{\lambda - i\mu_n}, 
\]
which is an analogue of the known formula in the classical reflectionless case~\cite{Mar91}. 

%%%%%%%%%%%%%%%%%%%%%%%%%%%%%%%%%%%%%%%%%%%%%%%%%%%%%
%

\section{Integrable soliton solutions of the KdV equation}\label{sec:KdV}

In this section, we justify the formula for the generalized soliton solution of the Korteweg--de Vries equation
\begin{equation}\label{eq:KdV.eq}
	u_t - 6u_xu + u_{xxx} =0,
\end{equation}
whose value at $t=0$ is the given integrable reflectionless potential~$q(x) = u(x,0)$, $x\in\bR$. That formula is suggested by the inverse scattering transform approach to the KdV equation~\cite{GGKM}; namely, if we regard $q_t:=u(\cdot,t)$ as the potential of the Schr\"odinger operator $T_{q_t}$, then the scattering data (the reflection coefficient $r(\cdot,t)$, the negative eigenvalues $-\varkappa_n^2(t)$, and the corresponding norming constants $m_n(t)$) satisfy the following relations:
\[
	r(k,t) = e^{8ik^3t}r(k,0), \qquad -\varkappa_n^2(t) = -\varkappa^2_n(0), \qquad m_n(t) = e^{8\varkappa_n t}m_n(0).
\]
Therefore, if we denote by $K(t)$ and $A(t)$ the diagonal operators in $\ell_2(\bN)$ constructed for the Schr\"odinger operator~$T_{q_t}$ as explained in Subsection~\ref{ssec:formula}, then $K(t)\equiv K(0)$ and $A(t) = A(0)\exp\{-8tK(0)^3\}$. This motivates the following statement.

\begin{theorem}\label{thm:KdV}
	Assume that $q\in\mathcal{R}_1$ is a generalized reflectionless potential and denote by $-\varkappa_1^2 < - \varkappa_2^2 < \dots$ and $m_1, m_2,\dots$ respectively the negative eigenvalues and the norming constants of the corresponding Schr\"odinger operator~$T_q$. Further, introduce in the Hilbert space~$H=\ell_2(\bN)$ the diagonal operators  $K=\diag\{\varkappa_1,\varkappa_2,\dots\}$ and $A = \diag\{\alpha_1,\alpha_2, \dots\}$, with $\alpha_n := \varkappa_n /m_n$, the trace class operator~$\Gamma$ via~\eqref{eq:form.Gamma} and a vector $\bm{\varkappa}:= (\varkappa_1,\varkappa_2,\dots)$. Then the function
	\begin{equation}\label{eq:KdV.sol}
		u(x,t) = 2 \frac{d}{dx} \|(A^2 e^{2xK-8tK^3} + \Gamma)^{-1/2}\bm{\varkappa}\|^2
	\end{equation}
	is well defined and gives a classical solution of the Korteweg--de Vries equation~\eqref{eq:KdV.eq} with initial data $u(x,0) = q(x)$. 
\end{theorem}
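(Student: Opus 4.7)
The plan is to read formula~\eqref{eq:KdV.sol} as a time-parameterized version of the construction in Theorem~\ref{thm:form.refl}, approximate by classical $N$-solitons (which are known to satisfy KdV by~\cite{GGKM}), and pass to the limit using the analyticity bounds for classical reflectionless potentials from Subsection~\ref{ssec:classical-properties}.

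First, I would introduce the time-dependent positive self-adjoint diagonal operator $A(t):=A\exp\{-4tK^3\}$; then $A(t)^2 e^{2xK}=A^2 e^{2xK-8tK^3}$, and Theorem~\ref{thm:form.refl} applied to the data $(K,A(t),\Gamma,\bm{\varkappa})$ shows that for every fixed $t\in\bR$ the function $x\mapsto u(x,t)$ is a well-defined reflectionless potential in $\cQ_1$ whose associated Schr\"odinger operator has negative eigenvalues $\{-\varkappa_n^2\}_{n\in\bN}$ and right norming constants $m_n(t):=m_n e^{4t\varkappa_n^3}$. In particular $u(\cdot,0)=q$, so the initial condition is automatic, and the problem reduces to verifying the PDE~\eqref{eq:KdV.eq}.

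Next I would construct classical approximants. For each $N\in\bN$, let $u_N(x,t)$ be the classical $N$-soliton solution built from the finite-dimensional analogue~\eqref{eq:form.Q-classical}--\eqref{eq:form.q-classical} with $A_N^2$ replaced by $A_N(t)^2:=A_N^2 e^{-8tK_N^3}$ (if $A$ is not uniformly positive, insert the perturbation $\gamma_N I$ from Case~2 of the proof of Theorem~\ref{thm:form.refl}). By the classical inverse scattering method of~\cite{GGKM}, each $u_N$ is a classical solution of~\eqref{eq:KdV.eq}. Repeating the subsequence argument of Theorem~\ref{thm:form.refl} at every fixed $t$---the time-evolved norming constants $m_j(t)$ still satisfy assumption $A(\bm{\varkappa})$---and invoking Lemma~\ref{lem:pre.condA}, I would extract a subsequence $u_{N_k}(\cdot,t)\to u(\cdot,t)$ in $L_1(\bR)$ and uniformly on compact subsets of $\bR$.

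The heart of the proof is to upgrade this convergence to locally uniform convergence of $u_{N_k}$ together with all partial derivatives entering~\eqref{eq:KdV.eq}. Property~(b) of Subsection~\ref{ssec:classical-properties} guarantees that every $u_N(\cdot,t)$ extends analytically to the strip $\Pi_{\varkappa_1}$ with the bound~\eqref{eq:pre.q-analytic}, which is independent of both $N$ and $t$; moreover, $u_N(x,t)$ is jointly real-analytic in $(x,t)$ because the matrix $A_N^2 e^{2xK_N-8tK_N^3}+\Gamma_N$ is. Cauchy's integral formula then yields uniform bounds for the spatial derivatives $\partial_x^j u_N$ on every compact subset of $\bR\times\bR$, and the KdV identity $\partial_t u_N=6 u_N\partial_x u_N-\partial_x^3 u_N$, satisfied by each $u_N$, converts these into uniform bounds on $\partial_t u_N$. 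A diagonal extraction then provides locally uniform convergence of $u_{N_k}$ to $u$ along with all needed partial derivatives, and the KdV equation passes to the limit.

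The main obstacle I expect is precisely the joint $(x,t)$ convergence of derivatives: while~\eqref{eq:pre.q-analytic} gives clean uniform control of spatial derivatives, there is no independent uniform bound on time derivatives of $u_N$. Using the KdV equation for the $u_N$ themselves is the natural way around this, reducing $\partial_t u_N$ back to spatial quantities that are controlled. A subtle point is also to ensure that the limit identified pointwise in $t$ as $u(\cdot,t)$ is the unique cluster point of the whole sequence, not just of some $t$-dependent subsequence; this is guaranteed by the uniqueness Theorem~\ref{thm:uniq-sp} applied at each $t$, since for each fixed $t$ the sequence $(\bm{\varkappa},\mathbf{m}(t))$ determines the reflectionless potential uniquely.
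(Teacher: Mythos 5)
Your proposal is correct in outline, but it takes a genuinely different route from the paper. The paper never approximates at the level of the PDE by $N$-solitons: it introduces the potential function $\varphi(x,t)=\|(A^2e^{xK-tK^3}+\Gamma)^{-1/2}\bm{\varkappa}\|^2$, so that $u(x,t)=2\partial_x\varphi(2x,8t)$, and verifies \emph{directly by operator algebra} that $\varphi$ solves $v_t-3(v_x)^2+v_{xxx}=0$ (Lemma~\ref{lem:KdV}): with $V=(E+\Gamma)^{-1}$, $B=-VEK$ and the rank-one identity $K\Gamma+\Gamma K=\langle\,\cdot\,,\bm{\varkappa}\rangle\bm{\varkappa}$, the derivatives $V_x$, $V_t$, $V_{xxx}$ reduce to polynomials in $B$ and $K$, and the nonlinearity $3(\varphi_x)^2$ drops out of the commutator relation $KB+BK=-2B^2+VR^*RB$. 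The case of a non-uniformly-positive $A$ is then handled by the regularization $A_\varepsilon=(A^2+\varepsilon I)^{1/2}$ together with a holomorphic continuation of $\varphi$ to the two-variable domain $\Omega_{\pi/2}\subset\bC^2$ and strong-resolvent limits (Lemmas~\ref{le.42y} and~\ref{le.43y}). What that buys is self-containedness --- the classical fact that the Kay--Moses $N$-soliton solves KdV is not quoted but re-proved as the finite-dimensional instance of the same computation --- plus the bidomain analyticity and bounds of Remark~\ref{re.}. Your route instead outsources the PDE verification to the classical theory and recovers the limit by compactness; its two load-bearing ingredients are sound: the strip bound~\eqref{eq:pre.q-analytic} depends only on $\varkappa_1$ and is therefore uniform in $N$ and $t$, so Cauchy estimates control all spatial derivatives uniformly, and feeding these back through the equation $\partial_t u_N=6u_N\partial_xu_N-\partial_x^3u_N$ is the right way to control the time derivatives for which no independent bound exists. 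Your appeal to Theorem~\ref{thm:uniq-sp} correctly removes the $t$-dependence of the subsequence. Two points you should make explicit in a write-up: the claim that each $u_N$ is a classical solution of~\eqref{eq:KdV.eq} needs either a precise reference or the direct verification that the paper supplies in Step~1 of the proof of Lemma~\ref{lem:KdV}; and the evolution law $m_n(t)=m_ne^{4t\varkappa_n^3}$ you use is indeed the one consistent with the exponent $2xK-8tK^3$ in~\eqref{eq:KdV.sol}.
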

	
\def\wt{\widetilde}

To simplify the calculations, we introduce an auxiliary function
\begin{equation}\label{eq.2x}
	\varphi(x,t):= \|(A^2e^{xK-tK^3} +\Gamma)^{-1/2}\bm{\varkappa}\|^2, \qquad x,t\in\bR,
\end{equation}
which is related to $u(x,t)$ by the formula $u(x,t) = 2\frac{d}{dx} \varphi(2x,8t)$. Denote also by $\Omega_\nu$ the cylindrical domain of $\bC^2$ of the form
\[
	\Omega_\nu:= \{(z,\zeta)\in \bC^2\mid |\myIm  z| < \nu/(2\kappa_1), |\myIm \zeta| < \nu/(2\kappa_1^3)\}.
\]

\begin{lemma}\label{lem:KdV}
	The function $\varphi$ is a solution of the nonlinear equation
	\begin{equation}\label{eq.3x}
		v_t - 3(v_x)^2+v_{xxx}=0.
	\end{equation}
	Moreover, it admits a holomorphic continuation in the cylindrical domain $\Omega_{\pi/2}$ and satisfies there the bounds 
	\begin{equation}\label{eq.4xx}
		|\varphi(z,\zeta)|\le  \frac{|\varphi(\myRe z,\myRe \zeta)|}{\cos\nu} \le \frac{2}{\cos\nu}\sum_{j\ge1}\kappa_j
\end{equation}
whenever $(z,\zeta) \in \Omega_\nu$ for some $\nu \in (0,\pi/2)$.
\end{lemma}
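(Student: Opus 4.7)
The plan is to analyze the operator-valued function $T(z,\zeta):=A^2e^{zK-\zeta K^3}+\Gamma$ via sectorial-form theory, and to derive the PDE~\eqref{eq.3x} by approximating $\varphi$ by finite-rank analogues corresponding to classical $n$-soliton solutions.

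First I would fix $\nu\in(0,\pi/2)$ and $(z,\zeta)\in\Omega_\nu$. Each diagonal entry of $A^2e^{zK-\zeta K^3}$ takes the form $\alpha_j^2 e^{(\myRe z)\varkappa_j-(\myRe\zeta)\varkappa_j^3}e^{i\theta_j}$ with $|\theta_j|\le\varkappa_1|\myIm z|+\varkappa_1^3|\myIm\zeta|<\nu$, hence lies in the open sector $\{w\in\bC:|\arg w|<\nu\}$; together with the positivity of $\Gamma$, this makes the sesquilinear form $\tau_{z,\zeta}[u,v]:=\langle A^2e^{zK-\zeta K^3}u,v\rangle+\langle\Gamma u,v\rangle$ sectorial with vertex $0$ and half-angle $\nu$, and satisfies $\myRe\tau_{z,\zeta}[u,u]\ge\cos\nu\cdot\tau_{\myRe z,\myRe\zeta}[u,u]$. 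In particular, its form domain coincides with that of $\tau_{\myRe z,\myRe\zeta}$ and contains $\bm{\varkappa}$ by Lemma~\ref{lem:form.Gamma1/2}. Kato's first representation theorem then produces an associated $m$-sectorial operator $T(z,\zeta)$, and I would \emph{define} $\varphi(z,\zeta):=\langle T(z,\zeta)^{-1}\bm{\varkappa},\bm{\varkappa}\rangle$ (interpreted as the limit $\lim_{\eps\to 0+}\langle(T(z,\zeta)+\eps I)^{-1}\bm{\varkappa},\bm{\varkappa}\rangle$), which agrees with~\eqref{eq.2x} when $(z,\zeta)=(x,t)$ is real.

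For the bound~\eqref{eq.4xx}, I would use the form-sense factorisation $T=T_R^{1/2}(I+iS)T_R^{1/2}$, where $T_R$ is the self-adjoint operator associated with $\myRe\tau_{z,\zeta}$ and $S:=T_R^{-1/2}T_IT_R^{-1/2}$ is bounded self-adjoint with $\|S\|\le\tan\nu$ (this follows from the sectoriality estimate $|\tau_I[u,u]|\le\tan\nu\cdot\tau_R[u,u]$). Since $\|(I+iS)^{-1}\|\le 1$, this yields $|\varphi(z,\zeta)|\le\langle T_R^{-1}\bm{\varkappa},\bm{\varkappa}\rangle$, and operator monotonicity of inversion applied to $T_R\ge\cos\nu\cdot T(\myRe z,\myRe\zeta)$ gives the first inequality in~\eqref{eq.4xx}; the second follows from $T(x,t)\ge\Gamma$ and Lemma~\ref{lem:form.Gamma1/2}. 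For holomorphy on $\Omega_{\pi/2}$, I would introduce the finite-rank approximants $\varphi_n(z,\zeta):=\langle((\gamma_nI+A^2)e^{zK-\zeta K^3}+P_n\Gamma P_n)^{-1}P_n\bm{\varkappa},P_n\bm{\varkappa}\rangle$ from the proof of Theorem~\ref{thm:form.refl} (Case~2); these are manifestly holomorphic on $\Omega_{\pi/2}$ and satisfy the analogue of~\eqref{eq.4xx} uniformly in $n$, so Vitali's theorem upgrades the pointwise convergence $\varphi_n\to\varphi$ on $\bR^2$ established in that proof to locally uniform convergence on $\Omega_{\pi/2}$.

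Finally, the PDE~\eqref{eq.3x} follows by approximation: each $\varphi_n$ corresponds, via $u_n(x,t)=2\partial_x\varphi_n(2x,8t)$, to a classical $n$-soliton solution of the KdV equation, and substituting this relation into~\eqref{eq:KdV.eq} yields $\partial_x[\partial_t\varphi_n-3(\partial_x\varphi_n)^2+\partial_x^3\varphi_n]=0$, with the constant of integration vanishing by decay as $x\to+\infty$. Locally uniform convergence on $\Omega_{\pi/2}$ together with Cauchy's integral formula converts this into locally uniform convergence of all partial derivatives on $\bR^2$, and passing to the limit in the equation for $\varphi_n$ yields~\eqref{eq.3x}. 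The main technical obstacle is the first step: because $A$ is generally unbounded, $T(z,\zeta)$ is genuinely unbounded and one must carefully verify $m$-sectoriality, identify the form domain, and justify that $\bm{\varkappa}$ lies in the appropriate generalised range; once the sectorial-form framework is set up, the remaining steps proceed largely routinely.
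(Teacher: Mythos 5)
Your proposal is sound in outline and reaches the lemma by a route that overlaps with the paper's in one half and genuinely diverges in the other. For the holomorphic continuation and the bound~\eqref{eq.4xx}, your sectorial-form decomposition $T=T_R^{1/2}(I+iS)^{}T_R^{1/2}$ with $\|S\|\le\tan\nu$ is, up to language, exactly the paper's factorisation $e^{zK-\zeta K^3}+\Gamma=C^{1/2}(I+iF)C^{1/2}$ with $C=e^{M}(\cos L)e^{M}+\Gamma$, and the estimate $T_R\ge\cos\nu\cdot T(\myRe z,\myRe\zeta)$ combined with Proposition~\ref{pro:app.inverse} is the same chain of inequalities. Where you differ substantively is in how the two proofs handle (i) the limit passage and (ii) the PDE. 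For (i), the paper regularises with $A_\eps=(A^2+\eps I)^{1/2}$ and proves pointwise convergence of $\varphi_\eps$ at \emph{complex} points via the strong-resolvent lemmas~\ref{le.42y} and~\ref{le.43y}, then uses the uniform bound to upgrade; you instead invoke Vitali/Montel with $\bR^2$ as a determining set for $\Omega_{\pi/2}$, which lets you get away with pointwise convergence on the reals only --- a genuine simplification, provided you note that the argument of Theorem~\ref{thm:form.refl} (Case~2) literally establishes that convergence only at $t=0$ and must be rerun (routinely) with $A$ replaced by $Ae^{-tK^3/2}$ for other $t$. For (ii), the paper derives~\eqref{eq.3x} by a self-contained operator computation (the $V$, $B$, $R$ calculus hinging on $K\Gamma+\Gamma K=R^*R$), which in particular \emph{reproves} that the $n$-soliton formula solves KdV; you instead cite that classical fact for the finite-rank approximants and pass to the limit. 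That is logically legitimate, but it outsources the algebraic heart of the lemma to the literature, whereas the paper deliberately keeps the argument self-contained.

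Two small inaccuracies to fix. First, the assertion that the form domain of $\tau_{z,\zeta}$ ``contains $\bm{\varkappa}$ by Lemma~\ref{lem:form.Gamma1/2}'' is wrong: that lemma places $\bm{\varkappa}$ in $\dom\Gamma^{-1/2}$, which concerns the \emph{range} side; when $A$ is unbounded, $\bm{\varkappa}$ need not lie in $\dom\bigl(Ae^{(zK-\zeta K^3)/2}\bigr)$ at all, and this is precisely why the quantity must be interpreted through $\lim_{\eps\to0+}\langle(T+\eps I)^{-1}\bm{\varkappa},\bm{\varkappa}\rangle$ (Proposition~\ref{pro:app.domain12}), as you do in the very next clause --- so the claim is harmless but should be deleted. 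Second, ``manifestly holomorphic'' for the approximants $\varphi_n$ is an overstatement: bounded invertibility of $(\gamma_nI+A^2)e^{zK-\zeta K^3}+P_n\Gamma P_n$ at complex $(z,\zeta)$ still requires the sectoriality/positive-real-part argument (it does go through, since $\gamma_n>0$ and $e^{(\myRe z)K}$ is uniformly positive), and the uniform-in-$n$ bound~\eqref{eq.4xx} for $\varphi_n$ needs the explicit computation $\langle\Gamma_n^{-1}\bm{\varkappa}_n,\bm{\varkappa}_n\rangle=2\sum_{j\le n}\varkappa_j$ from the proof of Lemma~\ref{lem:form.Gamma1/2}. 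With these points spelled out, your argument is complete.
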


We start with the following elementary observation that will allow us to consider only some special cases. Let $\mathscr{B}_+(H)$ and $\mathscr{S}_+(H)$ denote the sets of all bounded positive operators and self-adjoint positive operators in the Hilbert space~$H$, respectively. Assume that the operator $A$ can be written as $A=A_1A_2$, where the factors $A_1\in\mathscr{B}_+(H)$ and $A_2\in\mathscr{S}_+(H)$ commute with $K$ and with each other. If the operator $A_2$ is also uniformly positive, then
	\begin{equation*}
	\varphi(x,t) %= ((A_1^2e^{xK-tK^3} +\wt\Gamma)^{-1}\wt\bkappa, \wt\bkappa)
	  	= \|(A^2_1e^{xK-tK^3} +\wt\Gamma)^{-1/2}\wt\bkappa\|^2,
	\end{equation*}
	with $\wt\Gamma:=A_2^{-1}\Gamma A_2^{-1}$ and $\wt\bkappa:= A_2^{-1}\bkappa$. Since the operators $K$ and $A_2^{-1}$ commute, we get
	\begin{equation*}
	K\wt\Gamma+ \wt\Gamma K=\langle\,\cdot\,, \wt\bkappa\rangle\wt\bkappa.
	\end{equation*}	
Therefore, the case $A\in\mathscr{S}_+(H)$ can be reduced to that of $A\in\mathscr{B}_+(H)$, 
	and the case of a uniformly positive $A$ to that of $A=I$.
	
\begin{proofof}{Proof of Lemma~\ref{lem:KdV}: Step~1.}
Consider first the simpler case of a uniformly positive operator~$A$. As explained above, this can be reduced to $A=I$, which we assume for what follows. For convenience, we introduce auxiliary functions
\[ 
	E(x,t):=e^{xK-tK^3}, \quad    
	V(x,t):=(E(x,t) +\Gamma)^{-1}, \quad
	B(x,t):=-V(x,t)E(x,t)K.
\]
Observe that the operator $e^{xK-tK^3} + \Gamma$ is then uniformly positive, so that $\varphi$ can be written as 
\[
	\varphi(x,t):= \bigl\langle(e^{xK-tK^3} +\Gamma)^{-1}\bkappa, \bkappa\bigr\rangle = \langle V \bkappa , \bkappa \rangle.
\]
Direct differentiation produces
\begin{equation}\label{eq.5x}
	\begin{split}
		E_x=&EK, \qquad E_t=-EK^3,   \\
		V_x=&BV, \qquad   V_t=-BK^2 V, 
	\end{split}
\end{equation}
and 
\begin{equation}\label{eq.6x}
	B_x=-V_xEK - VE_xK=-BVEK-VEK^2= B^2+BK .
\end{equation}
Using~\eqref{eq.5x} and \eqref{eq.6x}, we next find that
$$
V_{xx}=(BV)_x=B_xV+BV_x= (B^2+BK)V +B^2V=B(2B+K)V
$$
and, differentiating once again, that
\begin{multline} \label{eq.7x}
		V_{xxx}=B_x(2B+K)V +2BB_xV+ B(2B+K)V_x=\\
		= (B^2+BK)(2B+K)V +2B(B^2+BK)V +B(2B+K)BV=\\
		= B[6B^2+3(KB+BK)+K^2)]V.
\end{multline}

Denote by $R$ the linear functional $R:\, H \to \bC$ defined via
\[
	R  :=\langle\,\cdot \,, \bkappa\rangle; 
\]    
then $K\Gamma+ \Gamma K=R^*R$ and $\varphi=RVR^*$. 
Using equalities \eqref{eq.7x} and \eqref{eq.5x}, we arrive at the relation
\begin{equation}\label{eq.8x}
	\varphi_t+\varphi_{xxx} = R(V_t+V_{xxx})R^* 
			= 3RB[2B^2 + KB + BK)]VR^*.
\end{equation}
Observe that 
\[
	KV + VK = V(E+\Gamma)KV + VK(E+\Gamma)V = -2BV + VR^*RV
\]
and, further, that 
\[
	KB + BK = -(KV+VK)EK = 2BVEK - VR^*RVEK = -2B^2 + VR^*RB.
\]
Combining this with \eqref{eq.8x}, we conclude that
$$
	\varphi_t+\varphi_{xxx}=R(V_t+V_{xxx})R^*= 3RBVR^*RBVR^*= 3(RV_x R^*)^2= 3(\varphi_x)^2,
$$
i.e., that
$$
	\varphi_t - 3(\varphi_x)^2 + \varphi_{xxx}=0.
$$

We next prove that the formula
\begin{equation*}
	\varphi(z,\zeta):= \bigl\langle(e^{zK-\zeta K^3} +\Gamma)^{-1}\bkappa,\bkappa\bigr\rangle
\end{equation*}
defines a function that is holomorphic in the domain~$\Omega_{\pi/2}$. To this end, it suffices to show that, for every $(z,\zeta)\in \Omega_{\pi/2}(K)$, the operator $ e^{zK-\zeta K^3} +\Gamma$ is boundedly invertible. We fix an arbitrary 
$(z,\zeta)\in \Omega_{\pi/2}(K)$ with $z=x+iy$, $\zeta= t+i s$, set
$$   L:=y K - s K^3,  \qquad M=\frac12(xK-t K^3),
$$
and note that the operators $L$ and $M$ are self-adjoint and commuting. Direct calculations show that 
\begin{equation*}
	e^{zK-\zeta K^3} +\Gamma = e^{M}e^{i L}e^{M} +\Gamma=e^{M}(\cos L)e^{M} +\Gamma
						+ie^{M}(\sin L)e^{M};
\end{equation*}
since $\|L\| < \pi/2$ by the definition of the set $\Omega_{\pi/2}$, we conclude that the operators $\cos L$ and $C:=e^{M}(\cos L)e^{M} +\Gamma $ are positive and invertible in the algebra $\mathscr{B}(H)$ of all bounded operators in $H$. 
Therefore, 
\[ 
	e^{zK-\zeta K^3} +\Gamma=C^{1/2}(I +iF)C^{1/2}
\]
with bounded self-adjoint $F:= C^{-1/2}e^{M}(\sin L)e^{M}C^{-1/2}$; as the operator $(I +iF)$ is invertible and $\|(I +iF)^{-1}\|\le 1$, we see that the operator $e^{zK-\zeta K^3} +\Gamma$ is also invertible and 
\begin{equation*}\label{eq.11x}
(e^{zK-\zeta K^3} +\Gamma)^{-1}=C^{-1/2} (I +iF)^{-1}  C^{-1/2}.
\end{equation*}
As a result, the function $\varphi$ is proved to be holomorphic in the domain $\Omega_{\pi/2}$ and satisfies there the bound 
$$
	|\varphi(z,\zeta)|\le \|RC^{-1/2}\| \|C^{-1/2} R^*\|=\|RC^{-1}R^*\|.
$$ 

To refine the above bound, we first observe that, by the spectral theorem, $(\cos L)\ge (\cos \|L\|)I$, so that
$$    
	C=e^{M}(\cos L)e^{M} +\Gamma \ge (\cos \|L\|)(e^{2M}+\Gamma)
$$
and, consequently,
$$  
	RC^{-1}R^*\le (\cos \|L\|)^{-1}R(e^{2M}+\Gamma)^{-1}R^* = (\cos \|L\|)^{-1} \varphi(x,t).
$$
Recalling that
$$   
	\varphi(x,t):= \|(e^{xK-tK^3} +\Gamma)^{-1/2}\bkappa\|^2\le \|\Gamma^{-1/2}\bkappa\|^2,
$$ 
we derive the required bound
\begin{equation*}
	|\varphi(z,\zeta)| \le  \frac{|\varphi(x,t)|}{\cos \|L\|} \le  \frac{\|\Gamma^{-1/2}\bkappa\|^2}{\cos \|L\|}
		\le \frac{\|\Gamma^{-1/2}\bkappa\|^2}{\cos \nu}
\end{equation*}
whenever $(z,\zeta) \in \Omega_\nu(K)$. 
\end{proofof}

\begin{proofof}{Proof of Lemma~\ref{lem:KdV}: Step~2.}
We discuss now the case of a general $A\in \mathscr{S}_+(H)$. As explained above, without loss of generality the operator~$A$ can be considered bounded. For an arbitrary $\varepsilon>0$, we set 
	$A_\varepsilon := (A^2+ \varepsilon I)^{1/2}$, 
	$B_\varepsilon:=(A^2+ \Gamma +\varepsilon I)^{1/2}$
and 
$$  
	\varphi_\varepsilon(z,\zeta)	
		:= \bigl\langle(A_\varepsilon^2e^{zK-\zeta K^3} +\Gamma)^{-1}\bkappa, \bkappa\bigr\rangle, 
			\qquad  (z,\zeta)\in\Omega_{\pi/2}.
$$
By Step 1 of the proof, for every $\varepsilon>0$  the function $\varphi_\varepsilon$ is holomorphic in $\Omega_{\pi/2}$ and satisfies there \eqref{eq.4xx} for all $(z,\zeta)\in \Omega_\nu$ with $\nu\in(0,\pi/2)$. Moreover, its restriction onto~$\mathbb{R}^2$ is a solution of equation~\eqref{eq.3x}. To complete the proof, it suffices to show that, as $\varepsilon\to +0$, the functions $\varphi_\varepsilon$ converge to $\varphi$ uniformly on compact sets of $\Omega_{\pi/2}$. 

By virtue of \eqref{eq.4xx}, the partial derivatives $(\varphi_\varepsilon)_z $ and $(\varphi_\varepsilon)_{\zeta} $ are uniformly bounded on every compact set of $\Omega_{\pi/2}$, so that it remains to prove that, as $\varepsilon\to +0$, the functions $\varphi_\varepsilon$ converge pointwise in $\Omega_{\pi/2}$, i.e., that for every $(z,\zeta)\in \Omega_{\pi/2}$, there exists a finite limit  
\begin{equation*}\label{eq.12x}
	\lim\limits_{\varepsilon\to +0} \bigl\langle(A_\varepsilon^2e^{zK-\zeta K^3} +\Gamma)^{-1}\bkappa, \bkappa\bigr\rangle.
\end{equation*}

Fix an arbitrary $(z,\zeta)\in \Omega_{\pi/2}$. Using the notations of Step~1 of the proof, we see that
\begin{multline*} 
	A^2_\varepsilon e^{zK-\zeta K^3} +\Gamma
		= A_\varepsilon e^{M}e^{i L}e^{M}A_\varepsilon +\Gamma
		= A_\varepsilon  e^{M}(\cos L)e^{M}A_\varepsilon  +\Gamma +i A_\varepsilon e^{M}(\sin L)e^{M}A_\varepsilon
\end{multline*}
and, therefore, that
\begin{equation*}\label{eq.13x}
	\bigl\langle(A_\varepsilon^2e^{zK-\zeta K^3} +\Gamma)^{-1}\bkappa, \bkappa\bigr\rangle=
			\bigl\langle (W_\varepsilon +iV_\varepsilon)^{-1} B^{-1}_\varepsilon \bkappa, B^{-1}_\varepsilon\bkappa\bigr\rangle,
\end{equation*}
where
\begin{equation*}\label{eq.14x}
\begin{split}
	W_\varepsilon &= B^{-1}_\varepsilon A_\varepsilon  e^{M}(\cos L)e^{M}A_\varepsilon
			B^{-1}_\varepsilon + B^{-1}_\varepsilon \Gamma B^{-1}_\varepsilon,   \\
	V_\varepsilon &= B^{-1}_\varepsilon A_\varepsilon e^{M}(\sin L)e^{M}A_\varepsilon B^{-1} _\varepsilon.
\end{split}
\end{equation*}

Since $\bkappa\in\dom \Gamma^{-1/2}=\operatorname{ran}(\Gamma^{1/2})$ and $A^2+\Gamma\ge\Gamma$, in view of Proposition~\ref{pro:app.inverse} we get $\bkappa\in \dom (A^2+\Gamma)^{-1/2}$; thus, according to Proposition~\ref{pro:app.domain12},
$$   
	\lim\limits_{\varepsilon\to +0} B^{-1}_\varepsilon \bkappa
		=(A^2+\Gamma)^{-1/2}\bkappa.    
$$
Therefore, it suffices to show that there exists the strong limit
	$\slim\limits_{\varepsilon\to+0} (W_\varepsilon +iV_\varepsilon) ^{-1}$.
Taking into account Lemma~\ref{le.42y}, we derive existence of the limits
$$  
	\slim\limits_{\varepsilon\to+0}B^{-1}_\varepsilon A_\varepsilon, \qquad
	\slim\limits_{\varepsilon\to+0} A_\varepsilon B^{-1}_\varepsilon,
$$
$$  
	\slim\limits_{\varepsilon\to+0}B^{-1}_\varepsilon \Gamma^{1/2}, \qquad
	\slim\limits_{\varepsilon\to+0} \Gamma^{1/2}B^{-1}_\varepsilon.
$$
so that the limit $\slim\limits_{\varepsilon\to+0}(W_\varepsilon +iV_\varepsilon)$ exists as well.

Since the operators $e^{M}$ and $\cos L$ are uniformly positive, for some $c\in (0,1)$ we have
$$  
	e^{M}(\cos L)e^{M} \ge cI, 
$$
so that
$$  
	W_\varepsilon = B^{-1}_\varepsilon (A_\varepsilon  e^{M}(\cos L)e^{M}A_\varepsilon 
			+ \Gamma) B^{-1}_\varepsilon \ge cB^{-1}_\varepsilon (A^2_\varepsilon 
			+ \Gamma) B^{-1}_\varepsilon
				=cI. 
$$

Since the operator $V_\varepsilon$ is self-adjoint, we see that 
$$   
	\myRe(W_\varepsilon+ iV_\varepsilon)\ge cI
$$
so that
$$      
	\|(W_\varepsilon+ iV_\varepsilon)^{-1}\| \le c^{-1}, \qquad \varepsilon>0.       
$$
Applying now Lemma~\ref{le.43y}, we conclude that the limit
$$	
	\slim\limits_{\varepsilon\to+0} (W_\varepsilon +iV_\varepsilon) ^{-1}
$$
exists, and this finishes the proof. 
\end{proofof}

\begin{proofof}{Proof of Theorem~\ref{thm:KdV}}
Setting $u(x,t) = 4 \varphi'_x(2x,8t)$ and differentiating, we find that, indeed,
\begin{multline*}
	u_t - 6 u_tu + u _{xxx}=
	[32\varphi_{xt} - 96 \varphi_{xx}\varphi_{x}+ 32\varphi_{xxx}](2x,8t)\\
	= 32 \frac{\partial} {\partial x}[\varphi_{t}  - 3(\varphi_x)^2 +\varphi_{xxx}] (2x,8t)=0
\end{multline*}
as required. The fact that $u$ is a classical solution follows from the properties of the function~$\varphi$.
\end{proofof}

\begin{remark}\label{re.} Along with $\varphi$, the solution $u$ of the KdV equation allows ananlytic continuation in the cylindrical domain $\Omega_{\pi/2}$; moreover, it satisfies the bound
	\[
			|u(z,\zeta)| \le  \frac{2}{\cos\nu}\sum_{j\ge1}\kappa_j
	\]
whenever $(z,\zeta)\in \Omega_{\nu}$ with $\nu\in(0,\pi/2)$. 	
\end{remark}

\medskip

\noindent\emph{Acknowledgement}
	The authors thank Prof.~Gesztesy for stimulating discussions and literature comments. The research was partially supported by Ministry of Education and Science of Ukraine, grant no.~0118U002060. The first author acknowledges support of the Centre for Innovation and Transfer of Natural Sciences and Engineering Knowledge at the University of Rzesz\'ow.

\appendix
\section{Some auxiliary results}\label{sec:app}

\subsection{The auxiliary result on one class of meromorphic functions}\label{ssec:app.Herglotz}

\begin{definition}\label{def:Herglotz} 
	Assume that a function $\varphi:\Omega\to\bC$ is analytic in a domain $\Omega\subset\bC$ containing the upper half-plane $\bC_+:=\{z\in\bC \mid |\myIm z>0\}$.
	We say $\varphi$ is a Herglotz function in $\bC_+$ if $\myIm\varphi(z)\ge 0$ for $z\in \bC_+$.
\end{definition}
Herglotz functions are also called Nevanlinna functions. Every Herglotz function can be represented in $\bC_+$ as (cf.~\cite{Nev})
\begin{equation}\label{eq.1E}
	\varphi(z) = az+b +\int_{\bR} \left(\frac{1}{t-z}-\frac{t}{1+t^2}\right) d\nu(t), \qquad z\in\bC_+,
\end{equation}
where $a\ge 0$, $b\in \bR$, and $\nu$ is a non-negative Borel measure on $\bR$ that is bounded on compact subsets of~$\bR$ and satisfies the condition
$$    
\int_{\bR} \frac{d\nu(t)}{1+t^2} <\infty.
$$

We denote by $\cM$ the set of all Borel measures $\nu$ of the form
\[
\nu  = \sum_{j=0}^{\infty} d_j \delta_{\xi_j},
\]
where $\delta_{\xi}$ denotes the Dirac delta-function centred at $\xi\in\bR$, $(\xi_j)_{j=1}^{\infty}$ is a strictly decreasing sequence of positive numbers converging to~$0$, $(d_j)_{j=1}^{\infty}$ is a summable sequence of positive numbers, $\xi_0=0$, and $d_0\ge 0$.

With every $\nu\in\cM$, we associate the Herglotz function
\[
	\phi_\nu(z) = 1 + \int_{\bR}\frac{d\nu}{t-z} = 1 + \sum_{j=0}^{\infty} \frac{d_j}{\xi_j - z}.
\]
The sum on the right-hand side converges uniformly on compact subsets of $\bC\setminus \{\supp \nu\}$ and defines a meromorphic function on the set $\bC\setminus\{0\}$ with poles at the points~$\xi_k$, $k\in\bN$. Being a Herglotz function, $\phi_\nu$ does not vanish on $\bC_\pm$ and strictly increases on each real interval not containing any pole. As a result, $\phi_\nu$ possesses a single zero $\eta_{k}$ in every interval $(\xi_{k},\xi_{k-1})$, $k=2,3, \dots$. Also, there is a single zero $\eta_1$ in the interval $(\xi_1,\infty)$. Observe also that $d_j$ with $j>0$ is the residue of the function $\phi_\nu$ at the pole $z=\xi_j$ and thus is uniquely determined by~$\phi_\nu$.

Next, we denote by~$\Lambda$ the set of all strictly decreasing sequences $\bla=(\lambda_n)_{n\in\bN}$ of positive numbers converging to zero.
With every $\bla\in\Lambda$, we associate the product 
\begin{equation}\label{eq.5E}
\psi_{\bla}(z):=\prod\limits_{n=1}^\infty\frac{z-\lambda_{2n-1}}{z-\lambda_{2n}}.
\end{equation}
In view of the relation 
\[ 
\frac{z-\lambda_{2n-1}}{z-\lambda_{2n}}
= 1+ \frac{\lambda_{2n}-\lambda_{2n-1}}{z-\lambda_{2n}}
\]
and convergence of the series 
\[
\sum_{n\ge1}|\lambda_{2n}-\lambda_{2n-1}|,
\]
the above product converges uniformly on compact subsets of $\bC$ not intersecting with the set
\(     \{\lambda_{2n}\mid n\in\bN\}\cup\{0\} 
\)
and thus defines a function that is meromorphic in~$\bC\setminus\{0\}$. The points $\lambda_{2n}$ are the poles of $\psi_{\bla}$, while $\lambda_{2n-1}$ are its zeros.

The main result of this subsection is that the set of functions $\phi_\nu$ and $\psi_{\bla}$ of the above form coincide. Namely, the following statements hold true.

\begin{theorem}\label{thm:app-Herglotz-int-prod}
	For every $\bla\in\Lambda$, there exists a unique measure $\nu\in\cM$ such that $\psi_{\bla}=\phi_\nu$. Conversely, 
	for every $\nu \in \cM$, there exists a unique sequence $\bla\in\Lambda$ such that $\phi_\nu = \psi_{\bla}$.
\end{theorem}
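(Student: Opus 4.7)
I would prove both directions by a truncate-and-pass-to-the-limit argument, exploiting the fact that the finite analogues of $\psi_{\bla}$ and $\phi_\nu$ are rational functions whose zero/pole structure is completely transparent.

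For the direction $\bla\to\nu$, I would fix $\bla\in\Lambda$ and work with the partial products $\psi_N(z):=\prod_{n=1}^N (z-\lambda_{2n-1})/(z-\lambda_{2n})$. Each $\psi_N$ is a rational Herglotz function on $\bC_+$ with $\psi_N(\infty)=1$ and simple real poles; the strict zero/pole interlacing forced by $\lambda_{2n-1}>\lambda_{2n}$ makes every residue negative and so yields
\[
	\psi_N(z) = 1 + \sum_{j=1}^N \frac{d_j^{(N)}}{\lambda_{2j}-z}, \qquad d_j^{(N)}>0.
\]
Comparing $1/z$-coefficients at infinity in the two expressions gives the uniform bound $\sum_j d_j^{(N)}=\sum_{n=1}^N(\lambda_{2n-1}-\lambda_{2n})\le \lambda_1$. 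The atomic measures $\mu_N:=\sum_j d_j^{(N)}\delta_{\lambda_{2j}}$ therefore form a bounded family of positive measures supported in $[0,\lambda_2]$; by weak-$*$ compactness and injectivity of the Stieltjes transform on finite measures, $\mu_N$ converges weakly-$*$ to a unique measure $\mu$ satisfying $\psi_{\bla}(z)=1+\int(t-z)^{-1}d\mu(t)$. Since $\supp\mu_N\subset\{\lambda_{2j}\}$ for every $N$, one has $\supp\mu\subseteq\{\lambda_{2n}\}_{n\in\bN}\cup\{0\}$; testing against bump functions that isolate a single $\lambda_{2n}$ then identifies $\mu(\{\lambda_{2n}\})=\lim_N d_n^{(N)}=:d_n$, and this limit is positive since it equals a convergent infinite product of positive factors controlled by the same $\ell_1$-bound. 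Setting $d_0:=\mu(\{0\})\ge 0$ produces the desired $\mu\in\cM$ with $\psi_{\bla}=\phi_\mu$.

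For the direction $\nu\to\bla$, the preamble to the theorem already locates the zeros $\eta_n$ of $\phi_\nu$ in the intervals $(\xi_n,\xi_{n-1})$ (with $\eta_1\in(\xi_1,\infty)$). I would set $\lambda_{2n-1}:=\eta_n$ and $\lambda_{2n}:=\xi_n$; strict interlacing gives $\bla\in\Lambda$. To prove $\phi_\nu=\psi_{\bla}$ I would truncate: the rational function
\[
	\phi^{(N)}(z) := 1-\frac{d_0}{z}+\sum_{j=1}^N \frac{d_j}{\xi_j-z}
\]
has poles at $0,\xi_1,\dots,\xi_N$ and, by monotonicity on each interpole interval, exactly $N+1$ zeros $\eta_1^{(N)},\dots,\eta_{N+1}^{(N)}$ that interlace with them; matching the leading term at infinity yields the factorisation
\[
	\phi^{(N)}(z) = \frac{z-\eta_{N+1}^{(N)}}{z}\prod_{j=1}^N\frac{z-\eta_j^{(N)}}{z-\xi_j}.
\]
As $N\to\infty$, local uniform convergence $\phi^{(N)}\to\phi_\nu$ together with strict monotonicity (or Hurwitz) on each fixed interval $(\xi_j,\xi_{j-1})$ forces $\eta_j^{(N)}\to\eta_j$; since $\eta_{N+1}^{(N)}\in(0,\xi_N)$ and $\xi_N\to 0$, the prefactor $(z-\eta_{N+1}^{(N)})/z\to 1$. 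The telescoping estimate $\sum_{j\ge 2}(\xi_{j-1}-\xi_j)=\xi_1$ bounds $\sum_{j\ge 1}(\eta_j-\xi_j)$, so the infinite product $\prod_j (z-\eta_j)/(z-\xi_j)$ converges absolutely and uniformly on compacta of $\bC\setminus(\{0\}\cup\{\xi_j\})$; comparing the two limits of $\phi^{(N)}$ yields $\phi_\nu=\psi_{\bla}$. (The case $d_0=0$ is simpler: there is no prefactor and no $(N+1)$th zero.) Uniqueness on each side is immediate, since $\bla$ is recovered as the ordered union of the zero and pole sets of $\psi_{\bla}$, and $\nu$ from $\phi_\nu$ via its pole locations and residues (equivalently, by Stieltjes inversion).

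The main obstacle, in my view, is the interchange of limits in direction~2: I need $\eta_j^{(N)}\to \eta_j$ for each fixed $j$ \emph{and} uniform enough tail control on $\sum_{j>N}|\eta_j^{(N)}-\xi_j|$ to move the $N\to\infty$ limit inside the infinite product. The telescoping estimate $\sum(\xi_{j-1}-\xi_j)=\xi_1$ (together with the interlacing constraint $\eta_j\in(\xi_j,\xi_{j-1})$) is the structural ingredient that makes this possible. A secondary, somewhat delicate point is the support identification of the weak-$*$ limit $\mu$ in direction~1; but this is handled by the general fact that a weak-$*$ limit of measures supported in $K_N$ must be supported in $\overline{\bigcup_N K_N}$, and the identification of individual atoms by bump-function tests is routine.
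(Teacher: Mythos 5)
Your proposal is correct, but it follows a genuinely different route from the paper's. The paper handles the direction $\bla\to\nu$ by showing \emph{directly} that the infinite product $\psi_{\bla}$ is Herglotz: the argument $\arg\psi_{\bla}(z)=\sum_n\alpha_n$ is the sum of the angles at which the intervals $[\lambda_{2n},\lambda_{2n-1}]$ are seen from $z\in\bC_+$, hence lies in $(0,\pi)$; the Nevanlinna representation, together with $\psi_{\bla}(z)=1+o(1)$ and real-valuedness off the poles, then yields $\nu\in\cM$ in one stroke. You instead truncate, read off the measure of each rational partial product, and pass to the limit by weak-$*$ compactness plus injectivity of the Stieltjes transform; this is more elementary and gives quantitative by-products (the identity $\sum_j d_j=\sum_n(\lambda_{2n-1}-\lambda_{2n})$, monotone convergence of the residues), at the price of the support/atom identification you rightly flag. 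For the converse direction the paper uses Krein's device (as in Levin): it multiplies $\phi_\nu$ successively by $(\xi_k-z)/(\eta_k-z)$, checks by a partial-fraction computation that each product is again Herglotz of the same form, and concludes that $h=\phi_\nu/\psi_{\bla}$ is a zero-free Herglotz function equal to $1-\alpha/z$, forcing $\alpha=0$. This elegantly avoids the two delicate points in your truncation argument --- the convergence $\eta_j^{(N)}\to\eta_j$ and the uniform tail control $|\xi_j-\eta_j^{(N)}|\le\xi_{j-1}-\xi_j$ needed to move the limit inside the product --- though both of those do go through exactly as you describe (monotonicity gives $\eta_j^{(N)}\uparrow\eta_j$, and the telescoping bound dominates the tails uniformly in $N$ on compacta away from the poles). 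In short: your proof is sound and self-contained; the paper's is shorter because the quotient trick replaces all limit interchanges by a single Liouville-type argument for the zero-free Herglotz quotient.
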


\begin{proof} 
	Uniqueness in both parts is straightforward, so that only existence need to be justified. We observe that  	
	a similar statement for meromorphic functions is proved in \cite[p.~399]{Levin} and is based on Krein's idea; see also~\cite{GesSim}. We slightly modify the arguments for the current setting.

	Take an arbitrary sequence $\bla \in \Lambda$. Observe that the argument
	$$
	\alpha_n:=\arg\left(\frac{z-\lambda_{2n-1}}{z-\lambda_{2n}}\right)= \arg{(z-\lambda_{2n-1})} -\arg{(z-\lambda_{2n})}
	$$
	is equal to the angle at which the interval $[\lambda_{2n-1},\lambda_{2n}]$ is seen from the point $z\in\bC_+$, so that 
	$$   
	\arg \psi_{\bla}(z) = \sum_{n=1}^\infty \alpha_n < \pi.
	$$
	Therefore, the function $\psi_{\bla}$ maps $\bC_+$ into $\bC_+$ and thus is a Herglotz function. Since
	$$   
	\psi_{\bla}(z)= 1+o(1), \qquad z\to \infty,
	$$
	and since $\psi_{\bla}$ assumes real values for 
	$x\in\bR\setminus \left(\{0\}\cup\{\lambda_{2n}\}_{n=1}^\infty \right)$, we conclude that
	$$    
	\psi_{\bla}(z) = 1 + \int_{\bR} \frac{d\nu(t)}{t-z}
	$$
	for some non-negative Borel measure $\nu$ with $\supp\nu\subset \{0\}\cup
	\{\lambda_{2n}\}_{n=1}^\infty $, i.e., $\nu\in\cM$.
	
	Conversely, consider an arbitrary $\nu \in \cM$ and denote by $\eta_k$, $k\in\bN$, the positive zeros of $\phi_\nu$ as explained above. Set $\lambda_{2n} = \xi_n$ and $\lambda_{2n-1}=\eta_n$; then $\bla:=(\lambda_n)_{n\in\bN}$ is an element of $\Lambda$, and we next prove that $\phi_\nu = \psi_{\bla}$. 
	
	As $\phi_\nu(\eta_1) = 0$, we see that 
	\[
	\frac{\phi_\nu(z)}{\eta_1 - z} = \frac{\phi_\nu(z) - \phi_\nu(\eta_1)}{\eta_1 - z} =
	\sum_{j=0}^\infty \frac{d_j'}{\xi_j - z},
	\]
	with $d'_j = d_j/(\eta_1-\xi_j)$. Therefore, 
	\[
	\frac{\xi_1 - z}{\eta_1 - z}\phi_\nu(z)
	=   \sum_{j=0}^\infty d_n' \frac{\xi_1 - z}{\xi_j - z}
	=  \sum_{j=0}^\infty d_j' \Bigl(1 + \frac{\xi_1 - \xi_j}{\xi_j - z}\Bigr) 
	= c + \sum_{j=0}^\infty \frac{d_j''}{\xi_j - z}
	\]
	where 
	\[
	c = \sum_{j=0}^\infty d_j' =  - \sum_{j=0}^\infty \frac{d_j}{\xi_j - \eta_1} = 1
	\]
	and 
	\[
	d_j'' = d_j \frac{\xi_1 - \xi_j}{\eta_1 - \xi_j} 
	\]
	with $d_1''=0$ and $d_j''>0$ for $j>0$. Therefore,
	\[
	\phi_{1,\nu}(z):=\frac{\xi_1 - z}{\eta_1 - z}\phi_\nu(z)
	\]
	is a Herglotz function. 
	
	In the same manner we show that the functions
	\[
	\phi_{n,\nu}(z) := \phi_\nu(z) \prod_{k=1}^n \frac{\xi_k - z}{\eta_k - z}
	\]
	are Herglotz, and, passing to the limit, we see that the function
	\[
	h(z) := \phi_\nu(z)/\psi_{\bla}(z)
	\]
	is Herglotz. The function $h$ is analytic in $\bC\setminus\{0\}$, assumes real values on $\bR\setminus\{0\}$, and 
	$$     h(z)=1+o(1) \qquad z\to \infty.
	$$
	As a result, it assumes the form
	$   h(z)=1 - \alpha/z
	$
	with $\alpha\ge 0$. By construction, $h$ has no zeros in the domain $\bC\setminus\{0\}$, whence $\alpha=0$, i.e.,  $\phi_\nu=\psi_{\bla}$. 
\end{proof}

\subsection{Blaschke products}
Denote by $\Lambda$ a subset of the space $\ell_1(\bN)$ consisting of all its sequences $\bm{\lambda}=(\lambda_n)_{n\in\bN}$ with the property that $\lambda_n \in\bC_+\cup\{0\}$ for every $n\in\bN$. The set $\Lambda$ becomes a topological space with inherited topology of~$\ell_1(\bN)$. For every $\bm{\lambda}\in \Lambda$, the corresponding Blaschke product
\begin{equation}\label{eq:pre.Blaschke}
B(z,\bm{\lambda}):=\prod_{n=1}^\infty \frac{z-\lambda_n}{z-\overline\lambda_n}, \qquad z\in\bC_+,
\end{equation}
converges absolutely in $\bC_+$ in view of the inequality 
\[
\Bigl|\frac{z-\lambda_n}{z-\overline\lambda_n} - 1 \Bigr| \le \frac{2|\lambda_n|}{|\opn{Im} z|}
\]
and defines there a holomorphic function. 
Observe also that $|B(z,\bm{\lambda})|\le1$ on $\bC_+$ due to the inequality $|z-\lambda_n| \le |z-\overline{\lambda}_n|$
holding for all $n\in\bN$ and all $z\in\bC_+$. Lemma~3.2 of~\cite{HM-traces} implies the following convergence result. 

\begin{lemma}\label{lem:pre.Blaschke}
	For every fixed $z_0 \in \bC_+$, the mapping 
	\[
	\Lambda \ni \bm{\lambda} \mapsto B(z_0, \bm{\lambda}) \in \bC
	\]
	is continuous.
\end{lemma}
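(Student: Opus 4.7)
The plan is to split the Blaschke product into a finite head and an infinite tail, handle the head by elementary continuity of a finite product of continuous functions, and dominate the tail uniformly over a neighborhood using the absolute summability assumption. Fix $z_0\in\bC_+$ and write $\delta:=|\opn{Im} z_0|$. For $\bm{\lambda}\in\Lambda$ and $n\in\bN$ set
\[
\beta_n(\bm{\lambda}):=\frac{z_0-\lambda_n}{z_0-\overline{\lambda}_n},
\]
so that $|\beta_n(\bm{\lambda})|\le 1$ and, as already noted in the excerpt, $|\beta_n(\bm{\lambda})-1|\le 2|\lambda_n|/\delta$. For $N\in\bN$ introduce the partial products
\[
B_N(\bm{\lambda}):=\prod_{n=1}^{N}\beta_n(\bm{\lambda}),\qquad R_N(\bm{\lambda}):=\prod_{n>N}\beta_n(\bm{\lambda}),
\]
so that $B(z_0,\bm{\lambda})=B_N(\bm{\lambda})R_N(\bm{\lambda})$.

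The first step is to observe that $B_N$ is continuous on $\Lambda$: each coordinate functional $\bm{\lambda}\mapsto\lambda_n$ is continuous on $\ell_1(\bN)$, hence so is every $\beta_n$ (as $z_0-\overline{\lambda}_n$ stays bounded away from zero), and a finite product of continuous scalar-valued maps is continuous. The second step controls the tail. Writing $\beta_n=1+(\beta_n-1)$ and using the standard bound $\bigl|\prod_{n>N}(1+a_n)-1\bigr|\le\exp\bigl(\sum_{n>N}|a_n|\bigr)-1$ for absolutely summable $a_n$ yields
\[
|R_N(\bm{\lambda})-1|\le\exp\!\Bigl(\tfrac{2}{\delta}\sum_{n>N}|\lambda_n|\Bigr)-1.
\]
Since the series $\sum|\lambda_n|$ is the $\ell_1$-norm, for any $\bm{\lambda}^0\in\Lambda$ and $\bm{\lambda}\in\Lambda$ one has $\sum_{n>N}|\lambda_n|\le\sum_{n>N}|\lambda_n^0|+\|\bm{\lambda}-\bm{\lambda}^0\|_1$, so the right-hand side can be made arbitrarily small by choosing $N$ large and then restricting $\bm{\lambda}$ to a small enough $\ell_1$-neighborhood of $\bm{\lambda}^0$.

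Finally, combining both estimates and using $|B_N|\le 1$, $|R_N|\le 1$, we decompose
\[
B(z_0,\bm{\lambda})-B(z_0,\bm{\lambda}^0)=B_N(\bm{\lambda})\bigl(R_N(\bm{\lambda})-R_N(\bm{\lambda}^0)\bigr)+\bigl(B_N(\bm{\lambda})-B_N(\bm{\lambda}^0)\bigr)R_N(\bm{\lambda}^0),
\]
and both summands are made arbitrarily small: the first by the tail estimate (applied to both $R_N(\bm{\lambda})$ and $R_N(\bm{\lambda}^0)$, writing $R_N(\bm{\lambda})-R_N(\bm{\lambda}^0)=(R_N(\bm{\lambda})-1)-(R_N(\bm{\lambda}^0)-1)$), the second by continuity of $B_N$ from Step~1. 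There is no real obstacle here; the only care needed is the standard bookkeeping to ensure that the tail control is \emph{uniform} over a neighborhood in $\Lambda$, which is exactly what the $\ell_1$-topology provides.
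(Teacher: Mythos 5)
Your proof is correct. Note, however, that the paper does not actually prove this lemma in situ: it simply records it as a consequence of Lemma~3.2 of the authors' earlier work \cite{HM-traces}, so there is no internal argument to compare against. Your head--tail decomposition $B = B_N R_N$ is the natural elementary route: the finite factor $B_N$ is continuous because each $\beta_n$ has denominator $|z_0-\overline{\lambda}_n|\ge \opn{Im}z_0$ bounded away from zero, and the tail is controlled uniformly on an $\ell_1$-neighborhood via $|\beta_n-1|\le 2|\lambda_n|/\delta$ together with the standard bound $\bigl|\prod_{n>N}(1+a_n)-1\bigr|\le \exp\bigl(\sum_{n>N}|a_n|\bigr)-1$ and the observation that $\sum_{n>N}|\lambda_n|\le\sum_{n>N}|\lambda_n^0|+\|\bm{\lambda}-\bm{\lambda}^0\|_1$. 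The splitting of the difference into $B_N(\bm{\lambda})\bigl(R_N(\bm{\lambda})-R_N(\bm{\lambda}^0)\bigr)+\bigl(B_N(\bm{\lambda})-B_N(\bm{\lambda}^0)\bigr)R_N(\bm{\lambda}^0)$, with all factors bounded by $1$ in modulus, closes the argument cleanly. What your version buys is self-containedness: a reader need not consult \cite{HM-traces}, and the same estimate in fact gives continuity that is locally uniform in $z_0$ on compact subsets of $\bC_+$ (only $\delta=\opn{Im}z_0$ enters), which is slightly more than the lemma asserts. The only cosmetic slip is writing $\Omega_{\pi/2}(K)$-style pedantry aside, you should say explicitly once that $|z_0-\overline{\lambda}_n|\ge\opn{Im}z_0+\opn{Im}\lambda_n\ge\delta$ since $\lambda_n\in\bC_+\cup\{0\}$, which is the reason both for $|\beta_n|\le1$ and for the denominator being bounded below; you use both facts but justify only the second.
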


\subsection{Auxiliary facts on self-adjoint operators}

In this appendix, we collect several auxiliary facts on self-adjoint operators. They are well known but we are at a loss on precise references. Assume that $H$ is a Hilbert space with scalar product $\langle \,\cdot\,,\,\cdot\,\rangle$ and denote by $\mathscr{B}(H)$ the algebra of all bounded operators in $H$ and by $\mathscr{B}_+(H)$ its subset set of all self-adjoint positive operators. 

Spectral theorem for self-adjoint operators along with the Lebesque dominated convergence theorem produce the following result.  

\begin{proposition}\label{pro:app.domain12}  
	Assume that $A$ is a positive operator in a Hilbert space~$H$ and $f\in H$ is such that
\begin{equation}\label{eq.41y}
	\lim_{t\to +0}\langle(tI+A)^{-1}f,f\rangle =:\alpha<\infty.
\end{equation}
Then $f$ belongs to the domain of $A^{-1/2}$ and $\|A^{-1/2}f\|^2=\alpha$.
Conversely, if $f\in\dom A^{-1/2}$, then
\begin{equation}\label{eq.42y}
\lim_{t\to +0}\langle(tI+A)^{-1}f,f\rangle=\lim_{t\to +0}\|(tI+A)^{-1/2}f\|^2 =\|A^{-1/2}f\|^2.
\end{equation}
\end{proposition}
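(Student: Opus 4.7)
My plan is to reduce the statement to a scalar integral identity via the spectral theorem, and then handle both directions simultaneously by one application of the monotone convergence theorem.

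First, by the spectral theorem for the positive self-adjoint operator $A$ there exists a spectral resolution $(E_\lambda)_{\lambda\ge 0}$ with $A=\int_{[0,\infty)}\lambda\,dE_\lambda$; for each $f\in H$ this induces a finite positive Borel measure $\mu_f$ on $[0,\infty)$ via $\mu_f(\Delta):=\langle E(\Delta)f,f\rangle$. The functional calculus applied to the bounded, strictly positive function $\lambda\mapsto(t+\lambda)^{-1}$ would give, for every $t>0$,
$$\langle(tI+A)^{-1}f,f\rangle \;=\; \int_{[0,\infty)}\frac{d\mu_f(\lambda)}{t+\lambda} \;=\; \|(tI+A)^{-1/2}f\|^2,$$
so that the middle equality in~\eqref{eq.42y} is immediate and the whole proposition reduces to identifying the limit of this scalar integral as $t\to 0+$.

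Second, since the integrand $(t+\lambda)^{-1}$ is non-negative and monotonically increases to $\lambda^{-1}$ (with the convention $1/0=+\infty$) as $t$ decreases to $0$, the monotone convergence theorem would yield
$$\lim_{t\to 0+}\int_{[0,\infty)}\frac{d\mu_f(\lambda)}{t+\lambda} \;=\; \int_{[0,\infty)}\frac{d\mu_f(\lambda)}{\lambda} \;\in\; [0,+\infty].$$
Recall from the spectral functional calculus that $f\in\dom(A^{-1/2})$ if and only if $\mu_f(\{0\})=0$ and $\int_{(0,\infty)}\lambda^{-1}\,d\mu_f(\lambda)<\infty$, and in that case $\|A^{-1/2}f\|^2=\int_{(0,\infty)}\lambda^{-1}\,d\mu_f(\lambda)$.

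Combining the two displays would prove both claims at once: if hypothesis~\eqref{eq.41y} holds, the right-hand integral is finite, which forces $\mu_f(\{0\})=0$ (any atom at $0$ would make the integral infinite) together with $\int_{(0,\infty)}\lambda^{-1}\,d\mu_f<\infty$; hence $f\in\dom(A^{-1/2})$ with $\|A^{-1/2}f\|^2=\alpha$. Conversely, if $f\in\dom(A^{-1/2})$ then $\mu_f(\{0\})=0$ and the limit equals $\|A^{-1/2}f\|^2$, giving~\eqref{eq.42y}. There is no real obstacle in this argument; the only delicate point is bookkeeping for a possible atom of $\mu_f$ at the origin, which is precisely why the convention $1/0=+\infty$ is essential so that the monotone convergence step also covers the case $f\not\perp\ker A$, where the limit is $+\infty$ and $f\notin\dom(A^{-1/2})$.
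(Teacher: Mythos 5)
Your proof is correct and follows essentially the same route the paper indicates: the paper gives no detailed argument, attributing the proposition to ``the spectral theorem along with the Lebesgue dominated convergence theorem,'' and your write-up simply fills in those details via the spectral measure $\mu_f$. The only (harmless) variation is that you invoke monotone rather than dominated convergence, which is in fact the cleaner choice since it handles the divergent case and a possible atom of $\mu_f$ at the origin in one stroke.
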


\begin{proposition}[\!\!{\cite[Sect.~VI.2.6]{Kato}}]\label{pro:app.inverse}
	Assume that $A$ and $B$ are positive operators and $A\le B$; 
	then $\dom (A^{-1/2}) \subset \dom (B^{-1/2})$ and $\|B^{-1/2}f\|\le \|A^{-1/2}f\|$ for all $f \in \dom(A^{-1/2})$. In particular, the operators $B^{-1/2}A^{1/2}$ and $A^{1/2}B^{-1/2}$ can be extended by continuity to elements of~$\mathscr{B}(H)$ of norm at most~$1$.
\end{proposition}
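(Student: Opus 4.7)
The plan is to reduce everything to Proposition~\ref{pro:app.domain12} via the standard resolvent trick, combined with the operator monotonicity of inversion. For any $t>0$ the operators $tI+A$ and $tI+B$ are uniformly positive (both $\ge tI$) and hence boundedly invertible, and $A\le B$ gives $tI+A\le tI+B$. The key algebraic input is: if $0<X\le Y$ are boundedly invertible positive operators, then $Y^{-1}\le X^{-1}$. I will prove this in one line by sandwiching: $X\le Y$ yields $Y^{-1/2}XY^{-1/2}\le I$, inverting via the spectral theorem gives $Y^{1/2}X^{-1}Y^{1/2}\ge I$, and sandwiching once more by $Y^{-1/2}$ produces $X^{-1}\ge Y^{-1}$. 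Applied with $X=tI+A$ and $Y=tI+B$, this gives
\[
  \langle (tI+B)^{-1}f,f\rangle \le \langle(tI+A)^{-1}f,f\rangle,\qquad t>0,\ f\in H.
\]

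Now fix $f\in\dom(A^{-1/2})$. By Proposition~\ref{pro:app.domain12} applied to $A$, the right-hand side tends to $\|A^{-1/2}f\|^2<\infty$ as $t\to 0+$, so the left-hand side stays bounded in this limit; it is also monotone increasing as $t$ decreases, so the limit exists and is finite. Applying the converse half of Proposition~\ref{pro:app.domain12} to $B$, I conclude that $f\in\dom(B^{-1/2})$ and
\[
  \|B^{-1/2}f\|^2 = \lim_{t\to 0+}\langle(tI+B)^{-1}f,f\rangle\le \|A^{-1/2}f\|^2,
\]
which is the main inclusion and norm bound.

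For the \emph{in particular} clause, the spectral-theorem identity $\|A^{-1/2}A^{1/2}h\|^2=\|h\|^2-\|E_{\{0\}}^A h\|^2\le\|h\|^2$ holds for every $h\in\dom(A^{1/2})$, which in particular places $A^{1/2}h$ in $\dom(A^{-1/2})$. Combined with the inclusion and inequality just established,
\[
  \|B^{-1/2}A^{1/2}h\|\le \|A^{-1/2}A^{1/2}h\| \le \|h\|,\qquad h\in\dom(A^{1/2}).
\]
Since $\dom(A^{1/2})$ is dense, $B^{-1/2}A^{1/2}$ extends by continuity to an operator $T\in\mathscr{B}(H)$ with $\|T\|\le 1$. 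The adjoint $T^*$ is then a bounded extension of $A^{1/2}B^{-1/2}$ with the same norm bound, verified directly from $\langle T^*f,h\rangle=\langle f,Th\rangle = \langle f, B^{-1/2}A^{1/2}h\rangle$ for $h\in\dom(A^{1/2})$ and $f$ in the natural domain of $A^{1/2}B^{-1/2}$. I do not expect any substantive obstacle: the only subtlety is guaranteeing bounded invertibility in the operator-monotone inversion step, which the $tI$-regularization supplies automatically, so the whole argument is a clean consequence of Proposition~\ref{pro:app.domain12}.
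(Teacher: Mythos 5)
Your proof is correct. Note, however, that the paper does not prove this proposition at all: it is stated with a citation to Kato \cite[Sect.~VI.2.6]{Kato}, where the underlying fact is the equivalence, for nonnegative self-adjoint operators, of the form ordering $A\le B$ with the resolvent ordering $(tI+B)^{-1}\le(tI+A)^{-1}$ for all $t>0$. Your argument supplies a self-contained derivation along exactly that resolvent route, feeding the monotone family $\langle(tI+B)^{-1}f,f\rangle$ into Proposition~\ref{pro:app.domain12}; this is a genuine added value over the bare citation, and it also meshes nicely with how Proposition~\ref{pro:app.domain12} is used elsewhere in the paper. Two small points of care. First, your one-line sandwich $Y^{-1/2}XY^{-1/2}\le I$ is literally well posed only for bounded $X$; since the proposition is applied in the paper to possibly unbounded positive operators (e.g.\ $A^2+\Gamma\ge\Gamma$ with $A\in\mathscr{S}_+(H)$), the hypothesis $A\le B$ must be read in the form sense, and the sandwich should be phrased as: $X^{1/2}Y^{-1/2}$ is a contraction because $\|X^{1/2}Y^{-1/2}f\|\le\|Y^{1/2}Y^{-1/2}f\|=\|f\|$, whence $Y^{-1/2}XY^{-1/2}=(X^{1/2}Y^{-1/2})^*(X^{1/2}Y^{-1/2})\le I$ and the inversion goes through as you say. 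Second, in the \emph{in particular} clause the paper's positive operators are injective (this is how $A^{-1/2}$ is meant), so $A^{-1/2}A^{1/2}h=h$ and your spectral-projection correction term vanishes; with that reading your density-plus-adjoint argument for $B^{-1/2}A^{1/2}$ and $A^{1/2}B^{-1/2}$ is exactly what is needed, and indeed it is the same mechanism the paper later exploits in Lemma~\ref{le.42y}.
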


\begin{lemma}\label{le.42y}  Assume that $A,B\in \mathscr{B}_+(H)$, $A\le B$ and
	$ A_\varepsilon:= A+\varepsilon I, \,\, B_\varepsilon:= B+\varepsilon I \,\,(\varepsilon>0).
	$
	Then there exist the strong limits
	\begin{equation}\label{eq.1y}
	\slim\limits_{\varepsilon\to+0}  A^{1/2}B^{-1/2}_\varepsilon =
	\slim\limits_{\varepsilon\to+0} A^{1/2}_\varepsilon B^{-1/2}_\varepsilon =(B^{-1/2}A^{1/2})^*,
	\end{equation}
	\begin{equation}\label{eq.2y}
	\slim\limits_{\varepsilon\to+0} B^{-1/2}_\varepsilon A^{1/2} = 
	\slim\limits_{\varepsilon\to+0}  B^{-1/2}_\varepsilon  A^{1/2}_\varepsilon = B^{-1/2}A^{1/2}.
	\end{equation}
\end{lemma}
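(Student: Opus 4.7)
My plan is to prove the four strong limits in three stages: first extracting uniform operator-norm bounds from the monotonicity $A \le B$, then proving the second line $B_\varepsilon^{-1/2} A_\varepsilon^{1/2} \to B^{-1/2}A^{1/2}$ (and its $A$-variant) directly, and finally deducing the first line by a Cauchy-plus-duality argument. Proposition~\ref{pro:app.inverse} applied to $A_\varepsilon \le B_\varepsilon$ yields $\|B_\varepsilon^{-1/2} h\| \le \|A_\varepsilon^{-1/2} h\|$ for every $h \in H$, hence the uniform bounds $\|B_\varepsilon^{-1/2} A_\varepsilon^{1/2}\| \le 1$ and $\|B_\varepsilon^{-1/2} A^{1/2}\| \le 1$ (and, by taking adjoints, the same for $A^{1/2}B_\varepsilon^{-1/2}$ and $A_\varepsilon^{1/2}B_\varepsilon^{-1/2}$). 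Since $\ran A^{1/2} \subset \dom A^{-1/2}$ for bounded $A$ (a direct spectral calculation gives $\mu_{A^{1/2}f}^A(d\lambda) = \lambda\,\mu_f^A(d\lambda)$, so the offending singularity at $\lambda=0$ is cancelled), the same proposition also guarantees that $B^{-1/2} A^{1/2}$ is a well-defined element of $\mathscr{B}(H)$ of norm at most one, and its adjoint is the natural target for the first line.

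For the second line I would fix $f \in H$, set $g := A^{1/2} f \in \dom B^{-1/2}$, and appeal to the spectral theorem of $B$: the integrand $\bigl|(\lambda+\varepsilon)^{-1/2} - \lambda^{-1/2}\bigr|^2$ is dominated by $4/\lambda \in L^1(\mu_g^B)$ and tends pointwise to zero, so $\|B_\varepsilon^{-1/2} g - B^{-1/2} g\| \to 0$ by dominated convergence. Replacing $A^{1/2}$ by $A_\varepsilon^{1/2}$ on the right is handled by the splitting $A_\varepsilon^{1/2} = A^{1/2} + (A_\varepsilon^{1/2} - A^{1/2})$ together with the estimate
\[
	\|B_\varepsilon^{-1/2}(A_\varepsilon^{1/2} - A^{1/2}) f\| \le \|A_\varepsilon^{-1/2}(A_\varepsilon^{1/2} - A^{1/2}) f\| = \|(I - A_\varepsilon^{-1/2} A^{1/2}) f\|,
\]
whose right-hand side vanishes as $\varepsilon \to +0$ by the spectral theorem of $A$ applied to the function $1 - \sqrt{\lambda/(\lambda+\varepsilon)}$; the residual contribution of $\ker A$ is absorbed via $\sqrt{\varepsilon}\,B_\varepsilon^{-1/2} f \to 0$, which holds provided $B$ is injective.

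For the first line I would bypass the non-continuity of the adjoint in the strong topology and verify instead the Cauchy property of $\{A^{1/2} B_\varepsilon^{-1/2} g\}_{\varepsilon>0}$ for each $g \in H$: using $A \le B$ one obtains
\[
	\|A^{1/2}(B_\varepsilon^{-1/2} - B_\delta^{-1/2}) g\|^2 \le \|B^{1/2}(B_\varepsilon^{-1/2} - B_\delta^{-1/2}) g\|^2 = \int_0^{\|B\|}\lambda\, \bigl|(\lambda+\varepsilon)^{-1/2} - (\lambda+\delta)^{-1/2}\bigr|^2 d\mu_g^B(\lambda).
\]
The integrand is pointwise bounded by $4$ on the support of $\mu_g^B$ and tends to zero as $\varepsilon,\delta \to +0$, so dominated convergence gives the Cauchy estimate and hence a strong limit $Lg$. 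This limit is identified with $(B^{-1/2}A^{1/2})^* g$ by testing against arbitrary $f \in H$ in the identity $\langle A^{1/2} B_\varepsilon^{-1/2} g, f \rangle = \langle g, B_\varepsilon^{-1/2} A^{1/2} f\rangle$ and using the second-line limit already established; the replacement of $A^{1/2}$ by $A_\varepsilon^{1/2}$ on the left is handled by the same splitting trick as in the previous paragraph.

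The main obstacle is that none of the individual factors $A_\varepsilon^{1/2}$, $B_\varepsilon^{-1/2}$ admits a useful limit in operator norm as $\varepsilon \to +0$ (indeed $\|B_\varepsilon^{-1/2}\|$ blows up), so the compositions must be kept together and the monotonicity $A \le B$ is the sole source of uniform control; without it the Cauchy estimate in the previous paragraph would fail. A related subtle point is that the identification of the strong limits with $(B^{-1/2}A^{1/2})^*$ and $B^{-1/2} A^{1/2}$ implicitly requires $B$ to be injective, which is automatic in the only application of the lemma within the paper, where $B$ has the form $A^2 + \Gamma$ with $\Gamma$ injective.
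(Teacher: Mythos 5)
Your proof is correct, but it reaches \eqref{eq.1y} by a genuinely different route than the paper. The paper's argument is organised around the two elementary strong limits $\slim_{\varepsilon\to+0}B_\varepsilon^{-1/2}B^{1/2}=I$ and $\slim_{\varepsilon\to+0}\varepsilon^{1/2}B_\varepsilon^{-1/2}=0$: for \eqref{eq.1y} it evaluates the uniformly bounded family $A_\varepsilon^{1/2}B_\varepsilon^{-1/2}$ on the dense subspace $\ran B^{1/2}$, where convergence is immediate, and concludes by uniform boundedness that the limit is the closure of $A^{1/2}B^{-1/2}$, i.e.\ $(B^{-1/2}A^{1/2})^*$; for \eqref{eq.2y} it factors $B_\varepsilon^{-1/2}A^{1/2}=(B_\varepsilon^{-1/2}B^{1/2})(B^{-1/2}A^{1/2})$ and handles the passage from $A^{1/2}$ to $A_\varepsilon^{1/2}$ using the spectral projectors $\chi_{[\varepsilon,\infty)}(A)$. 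You instead prove \eqref{eq.2y} first, by dominated convergence applied directly to the vectors $A^{1/2}f\in\dom B^{-1/2}$, and then obtain \eqref{eq.1y} from a Cauchy estimate in $\varepsilon$ --- dominating $A^{1/2}(B_\varepsilon^{-1/2}-B_\delta^{-1/2})$ by $B^{1/2}(B_\varepsilon^{-1/2}-B_\delta^{-1/2})$ via $A\le B$ --- followed by a weak identification of the limit against \eqref{eq.2y}. Your route trades the paper's ``convergence on a dense set plus uniform boundedness'' step for a completeness argument plus duality; both rest on the same two pillars, namely Proposition~\ref{pro:app.inverse} for the uniform norm bounds and the spectral theorem with dominated convergence. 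Two small points to tidy up: the replacement of $A^{1/2}$ by $A_\varepsilon^{1/2}$ on the \emph{left} of $B_\varepsilon^{-1/2}$ in \eqref{eq.1y} is not literally ``the same splitting trick'' as in \eqref{eq.2y}, since the factors occur in the opposite order; it is most cleanly done via $\|A_\varepsilon^{1/2}-A^{1/2}\|\le\varepsilon^{1/2}$ together with $\slim_{\varepsilon\to+0}\varepsilon^{1/2}B_\varepsilon^{-1/2}=0$. And your closing caveat about injectivity of $B$ is consistent with the paper's conventions: $\mathscr{B}_+(H)$ there consists of (injective) positive operators, so $\ran B^{1/2}$ is dense and $B^{-1/2}$, $A^{-1/2}$ are genuine self-adjoint operators, which both proofs implicitly use.
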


\begin{proof}
	We start by observing that, due to the Banach--Steinhaus theorem, the product of two strongly convergent operator families is strongly convergent. 
	Applying the Lebesque dominated convergence theorem to the spectral representation of $B$, we find that
	\begin{equation}\label{eq.4y}
	\slim\limits_{\varepsilon\to+0}B_{\varepsilon}^{-1/2}B^{1/2}=I, \qquad
	\slim\limits_{\varepsilon\to+0} \varepsilon^{1/2}B^{-1/2}_\varepsilon=0, 
	\end{equation}
	so that, for all $f\in H$, we get
	\begin{equation*}
	\lim\limits_{\varepsilon\to+0}  A^{1/2}B^{-1/2}_\varepsilon B^{1/2}f=A^{1/2} f, \quad
	\lim\limits_{\varepsilon\to+0} A^{1/2}_\varepsilon B^{-1/2}_\varepsilon B^{1/2}f =A^{1/2}f.
	\end{equation*}
	Since by Proposition~\ref{pro:app.inverse} the operators $A^{1/2}B^{-1/2}_\varepsilon$ and $A^{1/2}_\varepsilon B^{-1/2}_\varepsilon$ are uniformly bounded and since the range of the operator $B^{1/2}$ is everywhere dense in~$H$, the above relations imply that the limits in~\eqref{eq.1y} exist and are equal to the closure of the operator $A^{1/2} B^{-1/2}$, which is equal to $( B^{-1/2}A^{1/2})^*$.

	We next prove~\eqref{eq.2y}. It follows from Proposition~\ref{pro:app.inverse} that the operator $B^{-1/2}A^{1/2}$ is bounded; taking into account~\eqref{eq.4y} and the equality
	\begin{equation*}
		B_{\varepsilon}^{-1/2}A^{1/2}=(B_{\varepsilon}^{-1/2}B^{1/2})(B^{-1/2}A^{1/2}), 
	\end{equation*}
	we conclude that $\slim_{\varepsilon\to+0} B^{-1/2}_\varepsilon A^{1/2} = B^{-1/2} A^{1/2}$, i.e., get the first limit of~\eqref{eq.2y}. 
	Next, denote by $P_\varepsilon $ the spectral projector for $A$ corresponding to the interval $[\varepsilon,\infty)$ and set $P'_A(\varepsilon) = I - P_\varepsilon$.
	Applying the Lebesque dominated convergence theorem to the spectral representation of $A$, we find that
	\begin{equation*}
	\slim\limits_{\varepsilon\to+0}A^{-1/2}P_\varepsilon A_{\varepsilon}^{1/2}
	=I, \qquad
	\slim\limits_{\varepsilon\to+0}\varepsilon^{-1/2} P'_\varepsilon A_{\varepsilon}^{1/2}=0.
	\end{equation*}
	Using the first limit of~\eqref{eq.2y}, the second limit of~\eqref{eq.4y}, and the equality 
	\begin{equation*}
	B_{\varepsilon}^{-1/2}A_{\varepsilon}^{1/2}=(B_{\varepsilon}^{-1/2}A^{1/2})
	(A^{-1/2}P_\varepsilon A_{\varepsilon}^{1/2}) +(\varepsilon^{1/2}B^{-1/2}_\varepsilon)(\varepsilon^{-1/2} P'_\varepsilon A_\varepsilon^{1/2}),
	\end{equation*}
	we conclude that~$\slim_{\varepsilon\to+0} B^{-1/2}_\varepsilon A_\varepsilon^{1/2} =B^{-1/2} A^{1/2}$. 
	The proof is complete.
\end{proof}

\begin{lemma}\label{le.43y}
Assume that  $(A_n)_{n\in\bN}$ is a sequence of bounded and boundedly invertible operators in a Hilbert space~$H$ converging to an operator $A\in\mathscr{B}(H)$ in the strong operator topology. If $\sup_{n\in\bN}\|A_n^{-1}\|<\infty$ and $\slim\limits_{n\to\infty} A^*_n=A^*$, then $A$ is boundedly invertible and $\slim\limits_{n\to\infty} A^{-1}_n=A^{-1}$.
\end{lemma}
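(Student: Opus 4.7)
The plan is to first establish that $A$ is boundedly invertible with $\|A^{-1}\|\le M:=\sup_n\|A_n^{-1}\|$, and then derive the strong convergence $A_n^{-1}\to A^{-1}$ by a resolvent-type identity.

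First I would show that $A$ is bounded below by $1/M$. Indeed, for every $f\in H$ and every $n\in\bN$, invertibility of $A_n$ together with $\|A_n^{-1}\|\le M$ gives $\|f\|=\|A_n^{-1}A_nf\|\le M\|A_nf\|$; strong convergence $A_nf\to Af$ then yields $\|Af\|\ge\|f\|/M$. In particular $A$ is injective and $\ran A$ is closed. Applying exactly the same reasoning to the sequence $A_n^*$, which by hypothesis also converges strongly and satisfies $\|(A_n^*)^{-1}\|=\|A_n^{-1}\|\le M$, one gets $\|A^*f\|\ge\|f\|/M$. Hence $A^*$ is injective, so $\ran A=(\ker A^*)^\perp$ is dense, and combined with closedness we conclude $\ran A=H$. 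Therefore $A$ is a bijective element of $\mathscr{B}(H)$, and by the open mapping theorem (or directly from the lower bound) $\|A^{-1}\|\le M$.

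For the strong convergence of the inverses, I would use the elementary identity $A_n^{-1}-A^{-1}=A_n^{-1}(A-A_n)A^{-1}$: for any $g\in H$, setting $f:=A^{-1}g$ gives
\[
(A_n^{-1}-A^{-1})g = A_n^{-1}(A-A_n)f,
\]
so that $\|(A_n^{-1}-A^{-1})g\|\le M\|(A-A_n)f\|\to 0$ as $n\to\infty$ by the strong convergence $A_n\to A$. This proves $\slim_{n\to\infty}A_n^{-1}=A^{-1}$.

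The main obstacle, and the reason the hypothesis $\slim A_n^*=A^*$ cannot be dropped, is the surjectivity of $A$: strong convergence $A_n\to A$ plus uniform bound on $\|A_n^{-1}\|$ only delivers a lower bound on $A$, which in infinite dimensions guarantees injectivity and closed range but not density of the range. The adjoint hypothesis is exactly what is needed to transfer the lower bound to $A^*$ and thus force $\overline{\ran A}=H$; once bounded invertibility of $A$ is in hand, the convergence step is routine.
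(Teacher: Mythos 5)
Your proof is correct and follows essentially the same route as the paper: the lower bounds $\|Af\|,\|A^*f\|\ge \|f\|/\sup_n\|A_n^{-1}\|$ obtained from the uniform bound on the inverses and the two strong convergences, followed by the identity $A_n^{-1}-A^{-1}=A_n^{-1}(A-A_n)A^{-1}$. You merely spell out more explicitly than the paper why the two lower bounds force surjectivity of $A$ (closed range plus density of the range via injectivity of $A^*$), which is a welcome clarification but not a different argument.
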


\begin{proof} Under the assumptions of the lemma, set $\alpha:=\sup_{n\in\bN}\|A_n^{-1}\|$. Since for every
$f\in H$ and $n\in\bN$ we have
$$    \|f\| \le \|A_n^{-1}\|\|A_nf\|\le \alpha \|A_nf\| $$
and
$$
\|f\| \le \|(A^*_n)^{-1}\|\|A^*_nf\| \le  \alpha \|A^*_nf\|,
$$
we see that 
$$    \|Af\|, \|A^*f\|\ge \alpha^{-1}\|f\|,  \qquad f\in H,
$$
i.e., the operators $A$ and $A^*$ have the trivial nullspace, so that the operator $A$ is invertible. 
Now it follows that, for every $f\in H$,
\[
	\|(A_n^{-1} -A^{-1})f\| 
		= \|A_n^{-1}(A- A_n)A^{-1}f\| \le \alpha \|(A- A_n)A^{-1}f\| \to 0
\]
as $n\to\infty$, and the proof is complete.
\end{proof}

%%%%%%%%%%%%%%%%%%%%%%%%%%%%%%%%%%%%%%%%%%%%%%%%%%%%%
%
%\newpage

\end{document}